\documentclass[12pt,leqno]{article}
\usepackage{amsmath}\usepackage{amsthm}
\usepackage{amssymb}
\usepackage{graphicx}
\usepackage{epsfig}     
\usepackage{color}
\parskip 4pt
\parindent 4pt
\numberwithin{equation}{section}
\setlength\oddsidemargin{-0.7cm} \setlength\evensidemargin{-0.7cm}
\addtolength{\textwidth}{3.5cm} \addtolength{\voffset}{-1.0cm}
\addtolength{\textheight}{3.5cm}

\newtheorem{theorem}{Theorem}[section]
\newtheorem{proposition}{Proposition}[section]
\newtheorem{lemma}{Lemma}[section]
\newtheorem{corollary}{Corollary}[section]
\newtheorem{remark}{Remark}[section]

\newtheorem{definition}{Definition}[section]
\renewcommand{\dfrac}{\displaystyle\frac}
\newcommand{\dint}{\displaystyle \int}


\newcommand{\ren}{\mathbb{R}^n}

\newcommand{\RR}{\mathbb{R}}
\newcommand{\mr}{\mathbb{R}}

\newcommand{\ve}{\varepsilon}

\newcommand{\p}{\partial}
\newcommand{\ep}{\varepsilon}
\newcommand{\nc}{\normalcolor}
\newcommand{\nab}{\nabla}
\newcommand{\hal}{\frac{1}{2}}

\def\({\left(}
\def\){\right)}

\def\a{\alpha}
\definecolor{darkblue}{rgb}{0.05, .05, .65}
\definecolor{darkgreen}{rgb}{0.1, .65, .1}
\definecolor{darkred}{rgb}{0.8,0,0}

\begin{document}

\title{ A Mean Field Equation as  Limit of Nonlinear  \\ Diffusions with Fractional Laplacian Operators}

\author{  Sylvia Serfaty and Juan Luis V\'azquez}

\date{June 2012}

\maketitle
\pagestyle{myheadings} \thispagestyle{plain} \markboth{}{}
\maketitle
\begin{abstract}

 In the limit of a nonlinear  diffusion model involving the fractional Laplacian we
get a ``mean field"  equation   arising in  superconductivity and superfluidity.
For this equation, we obtain uniqueness, universal bounds and regularity results. 
We also show that  solutions with finite second moment and radial solutions admit an asymptotic large time limiting profile which is a  special self-similar solution: the ``elementary vortex patch". \end{abstract}

\vskip .5cm

2000 {\bf Mathematics Subject Classification:}  35K55, 35K65, 76S05.

{\bf Keywords and phases.} Nonlinear diffusion, fractional Laplacian, hydrodynamic limit, superconductivity, vortex patch, universal bound.


\section{Introduction}

We consider the class of evolution models with nonlocal effects  given by the system
\begin{equation}\label{eq1}
\sl  u_t=\nabla\cdot(u\,\nabla p), \quad p={\cal K}u.
\normalcolor
\end{equation}
where $u$    is a function of the variables $(x,t)$ to be thought of
as a density or concentration, and therefore nonnegative, while
$p$ is the ``pressure" and ${\bf v}=-\nabla p$ represent the mass propagation speed. We postulate that $p$ is related to $u$ via a linear integral operator $\cal K$  which we assume in practice to
be the inverse of a fractional Laplacian,  that is,
 ${\cal K}=(-\Delta)^{-s}$, with $s\in (0,1]$. More general integral operators are also now being considered in the literature.

The problem is posed for $t>0$ and $x\in \RR^n$,  $n\ge 2$. (The case $n=1$ is easier  and a bit different,  and a  short explanation will be given below). Finally, we
give initial conditions
\begin{equation}\label{eq.ic}
u(x,0)=u_0(x), \quad x\in \RR^n,
\end{equation}
where $u_0$ is a nonnegative  integrable function in
$\RR^n.$

\medskip

\noindent {\sc Nonlocal diffusive models.} These correspond to the equation
\begin{equation}\label{eq.s}
  u_t=\nabla\cdot(u\,\nabla (-\Delta)^{-s}u),
\normalcolor
\end{equation}
in the  parameter range $0<s< 1$. This equation can  be considered as a nonlocal version of the standard porous medium equation $u_t=\Delta (u^2)$.
 For convenience, we will often refer to it as the FPME (fractional porous medium equation).

The FPME has been derived starting from the continuity equation
\begin{equation}
\partial_t u+ \nabla\cdot(u{\bf  v})=0\,,
\end{equation}
where $u$ is a density and ${\bf v}$ is a velocity field, which
according to Darcy's law derives from a pressure, ${\bf v}=-\nabla
p$ (as in the theory of gases in porous media).  Physical applications range from
macroscopic evolution of particle systems with short- and long-range
interactions \cite{GLM2000}, to phase segregation \cite{GL1, GL2}, and dislocation dynamics \cite{Head, BKM}.
 The existence of a weak solution, the
relevant integral estimates, as well as the property of compact
support have been established by Caffarelli and the second author
in the paper \cite{CV1}.  The existence is shown by passing to the
limit  in the sequence of smooth solutions of approximate
problems, and using the compactness of the integral operators
involved in the weak formulation and its energy estimates for
$s<1$ (thus the method does not directly apply to $s=1$). Note that no uniqueness is proven,
and examples are constructed where the maximum principle does not hold for $s>1/2$.

\medskip

\noindent {\sc Hydrodynamic or mean field equation.} The case $s=1$ of equation \eqref{eq1}--\eqref{eq.ic}
\begin{equation}\label{eqs1}
 u_t
= \nabla\cdot(u\,\nabla p),\qquad p=(-\Delta)^{-1}u\,,
\end{equation}
supplemented with initial data:
\begin{equation}\label{init}
u(x,0) = u^0(x)\,,
\end{equation}
is no more a diffusion equation. This equation was first studied in  $\RR^2$ by Lin-Zhang \cite{lz}.   There, existence is proven by a vortex point approximation, and uniqueness by estimates in Zygmund spaces. In dimension 2,  the equation is directly related to the
Chapman-Rubinstein-Schatzman mean field model  of
superconductivity \cite{crs} and to  E's model of superfluidity \cite{E}, which would correspond rather to the equation
\begin{equation}\label{signeq}
u_t= \nab \cdot (| u|\nab p)\end{equation}  which coincides with \eqref{eqs1} when $u\ge 0$.
 That model is a
mean field model for the motion of vortices in a superconductor in
the Ginzburg-Landau theory. There, $u$   represents the local
vortex-density, and $p$  represents the induced magnetic field in
the sample. In this application the equation is really posed in a
bounded domain and for signed ``vorticity measures" $u$ (which can
be measures instead of functions), but the study of \eqref{eqs1}
in $\RR^2$ with $u\ge 0$, corresponding to the situation where the vorticity is positive, is a reasonable start.  It can also be
viewed as the gradient-flow version of the Euler equation in
vorticity form i.e.
$$
u_t= \nabla \cdot (u \, \nab^\perp p) \qquad p=(-\Delta)^{-1} u\,,
$$
hence also  the term hydrodynamic equation. In \cite{AmSr}, Ambrosio and the first author studied the
model in a bounded  two-dimensional  domain (taking into account the possibility of
flux of vortices through the boundary), via a gradient flow
approach: the equation is the gradient-flow for a natural energy
(see below) for the $2$-Wasserstein distance. There, $u$ is
denoted $\mu$, and $p$ is denoted $h$. Solutions with $u$ measure
(and not $L^1$) are also treated there. The equation \eqref{signeq} with changing sign $u$  has been studied by Masmoudi-Zhang \cite{mz} in $\RR^2$,
Ambrosio-Mainini-Serfaty \cite{ams} in bounded  domains of the plane and
in the whole plane, again using a gradient-flow approach.

This equation is also related to nonlocal aggregation models, which have attracted a lot of attention lately, notably in conjunction with the Keller-Segel  model \cite{KS71}, see e.g. \cite{bcl,bgl,BLL}. They correspond to \eqref{eq1} with $\cal K$ a general integral operator corresponding to an attractive (instead of repulsive) interaction. Particularly relevant to us is the paper of Bertozzi-Laurent-L\'eger \cite{BLL} which deals mostly with aggregation via the Coulomb kernel, but contains a section (Section 3)  that deals with the repulsive case, i.e. exactly the equation \eqref{eqs1}-\eqref{init}. In addition to existence and uniqueness results, it makes  a detailed analysis of the property of support propagation and asymptotic behaviour for solutions with compactly supported initial data.

\medskip

\noindent {\sc Outline of results.} In the present paper we are interested in obtaining Equation \eqref{eqs1}  as the limit of the nonlocal diffusive equations \eqref{eq.s} when $s\to 1$;  and in  expanding the theory for  the limiting equation.  We consider the initial-value problem  posed  in $\RR^n$, for all dimensions $n\ge 2$, with comments on the easier case $n=1$. We develop an existence theory for weak solutions with a  nonnegative finite measure as initial data.  Such solutions are uniformly bounded for positive times in the form $|u(x,t)|\le 1/t$. We prove a  uniqueness result for bounded solutions.

As another important feature of the theory, we derive the support propagation properties (for general solutions), and compare the result with a similar property for the fractional diffusion approximation.

Some insight is obtained with the introduction of the special formulation  of the equation under conditions of radial symmetry, leading to a non-homogeneous Burgers' equation for the so-called mass function. With this Burgers' equation approach, we recover the existence of a fundamental solution in the form of a round vortex patch of constant (in space) density,  and show that they are attractors for the evolution of all radial solutions.  We also find  other explicit radial  solutions that  allow to derive a new non-comparison result.\nc

 We also study the question of large time behaviour for another  class of initial data where the restriction of compact support is dropped, more precisely  integrable data with finite second moment. The method, which is interesting for its own sake, is an energy method that follows the now well-known entropy - entropy dissipation approach,  see \cite{CV2} and  references.

\medskip

\noindent {\sc Plan of the paper.}
The paper is organized as follows: Section \ref{sec.exist} is devoted to the definition of solutions for $s<1$ and $s=1$ and to recalling some of their properties (conserved quantities, etc). In Section \ref{sec2} we show the existence of solutions to \eqref{eqs1} by taking the limit  $ s \to 1$ in solutions to \eqref{eq1}. Section \ref{secuniq} is devoted to the proof of uniqueness for bounded solutions. Section \ref{sec.ub} contains the universal decay estimate for the solutions. In Section \ref{sec.radial} we investigate the theory in the class of radial solutions via the Burgers' equation approach. In Section \ref{sec.compact} we show that the compact support property is preserved during the evolution, see also \cite{BLL}.  Section \ref{sec.asbeh} discusses the question of large time behaviour for two classes of initial data where the restriction of compact support is dropped: (i) integrable data with finite second moment, (ii) all solutions that are radially symmetric in the space variable.

\medskip
\noindent {\sc Open problems.}
Here is list of the main problems that remain open: obtaining uniqueness results beyond the class of bounded solutions, analyzing the long-term behaviour of all  solutions with integrable data, and finally treating the case of sign changing solutions.

\medskip
\noindent {\sc Acknowledgments.} We would like to thank J. A. Carrillo for pointing out to us the reference \cite{BLL} after a first draft of this paper was written. S. S. thanks the Universidad Aut\'onoma de Madrid for its hospitality  that allowed this work to be completed. S. S. was supported by a EURYI award, and JLV   by
Spanish Project MTM2008-06326-C02-01.
\nc

\section{Definitions and first properties of the solutions}\label{sec.exist}

 It will be
convenient to write for any $s\in (0,1]$ the equation in the  form
\begin{equation}\label{gen.eq}
\partial
_t u=\nabla\cdot (u\,\nabla {\cal K}_s u),\qquad {\cal K}_s=(-\Delta)^{-s}\,.
\end{equation}
 ${\cal K}_s$ is a positive essentially self-adjoint
operator. For convenience, we introduce  ${\cal H}_s={\cal
K}_s^{1/2}=(-\Delta)^{-s/2}$. We write $p={\cal K}_s u$, so that
$u=(-\Delta)^s p$ and ${\cal H}_s u=(-\Delta)^{s/2} p$.

We start with the definition of weak solutions.
 The notation $\mathcal{M}^+(\mr^n)$ denotes the space of positive  Radon  measures  on $\mr^n$ with finite mass.

\begin{definition}\label{defsolweak}For any $s\in (0,1]$, a  weak solution of  \eqref{gen.eq} in $Q_T=\RR^n\times (0,T)$ is a
nonnegative function $u(x,t)$ such that $u\in L^1(Q_T)$, $p={\cal
K}_s u \in L^1((\tau,  T),  W^{1,1}_{loc}(\RR^n))$,
$u\,\nabla p\in L^1((\tau,  T)\times \ren)$ for all $\tau>0$, and the identity
\begin{equation}
\iint (u\,\phi_t- u\, \nabla p\cdot\nabla\phi)\,dxdt= 0
\end{equation}
holds for all continuously differentiable  test functions  $\phi$ compactly supported  in $Q=\mr^n\times (0, T)$. If $s<1$ we also assume that $u$ is continuous in $Q_T$. \end{definition}

\begin{definition}\label{defsol}For any $s\in (0,1]$, a  weak solution of the problem formed by equation   \eqref{gen.eq}  in $Q_T=\RR^n\times (0,T)$ with initial data $\mu \in {\cal M}^+(\RR^n)$ is a continuous
nonnegative function $u(x,t)$ with  $u\in L^1(Q_T)$, $p={\cal
K}_su \in L^1((0,  T),  W^{1,1}_{loc}(\RR^n))$,
$u\,\nabla p\in L^1(Q_T)$, and the identity
\begin{equation}
\iint (u\,\phi_t- u\, \nabla p\cdot\nabla\phi)\,dxdt+ \int
\phi(x,0)\,d\mu(x)=0
\end{equation}
holds for all continuously differentiable  test functions $\phi$ in $\mr^n\times [0, \infty)$  such that
$\phi$ has compact support in
the space variable and vanishes for $t\ge T$. \end{definition}

In practice the solutions obtained below have much better properties that can be exploited in the analysis. In particular they will be uniformly bounded for $t\ge \tau>0$ and satisfy energy inequalities. Following \cite{lz} we may call such improved solutions {\sl dissipative solutions}.

\subsection{Properties of the solutions in the case $s<1$}

The existence of weak solutions in the sense of Definition \ref{defsol} is proved in \cite{CV1} under the assumption that the initial data is actually a bounded function $u_0\ge 0 $ decaying exponentially as $|x|\to\infty$, cf. Theorem  4.1 of \cite{CV1}.  It is extended to general nonnegative $L^1$ initial data in \cite{CSV}.
Let us list the main properties of those solutions with attention to the way they depend on $s\in (0,1)$. We point out that we do not use all of them in the sequel.

- {\sc Conservation of mass:} The solutions exist globally in time and for every $t>0$ we have
\begin{equation}
\int u(x,t)\,dx=\int u_0 (x)\,dx.
\end{equation}

- {\sc Conservation of positivity:} $u_0\ge 0$ implies that $u( \cdot, t)\ge 0$
for all times.

 -{\sc  $L^p $ estimates:} the $L^p$ norm of the constructed solutions does not increase in time, for any $1\le p\le \infty$.

- A general {\sc comparison theorem,} i.\,e., a form of the usual
maximum principle,  is proven to be false by constructing counterexamples for $s>1/2$, see \cite{CV1}. However, comparison of solutions for an integrated version of the equation
is established in dimension $n=1$ by the method of viscosity
solutions in Biler-Karch-Monneau \cite{BKM}. We will return to that issue below.

- {\sc Finite speed of propagation.} If the initial function has compact support so does the solution at any given time $t>0$. Estimates on the growth of the support for very large time
are given in \cite{CV1} and \cite{CV2}.

 -{\sc  Log-entropy estimate:} We have
\begin{equation}
\frac{d}{dt}\int u(x,t)\log u(x,t)\,dx=-\int |\nabla {\cal
H}_s u|^2\,dx.
\end{equation}
therefore, solutions with $u_0\log  u_0\in L^1(\ren)$ stay in the
same space and moreover satisfy
$$
\iint |\nabla {\cal H}_{s} u|^2\,dxdt<   \infty.
$$

- {\sc  Energy estimate.} We first make the observation that
if $p={\cal K}_s u$ we have
\begin{equation}\label{energie}
E(u):=\int up\,dx=\int ((-\Delta)^{s/2}p)^2\,dx =\int ({\cal
H}_s u)^2\,dx \ge 0.
\end{equation}
This formal computation is justified for the class of constructed
weak solutions. This is the main energy we are going to use. We
also have formally
$$
\int |\nabla {\cal H}_s u|^2\,dx=\int|\nabla
(-\Delta)^{s/2}p|^2\,dx=\int u(-\Delta p)\,dx.
$$
The estimate on the evolution of the energy according to our flow
is then
\begin{equation*}
\frac{d}{dt}\int |{\cal H}_s u|^2\,dx=2\int ({\cal H}_s u ) ({\cal H}_s u )_t\,dx= 2\int ({\cal K}_s u ) u_t\,dx= -2\int u|\nabla {\cal
K}_s u |^2\,dx.
\end{equation*}
where the last identity is obtained after using the equation and integrating by parts. This is justified for smooth solutions, while in the generality of weak solutions we get
\begin{equation}\label{2.12}
\int_{\ren} |{\cal H}_s u(t_2)|^2\,dx + 2\int_{t_1}^{t_2} \int u|\nabla {\cal K}_s u|^2\,dx\le \int_{\ren} |{\cal H}_s u(t_1)|^2\,dx
\end{equation}

- {\sc Boundedness: } The solutions to \eqref{gen.eq} constructed in \cite{CV1} have the following {\sl boundedness effect}  (used in the subsequent paper \cite{CSV}): For all $0<s<1$, weak energy solutions in $L^1$ are bounded for all
positive times.  More precisely, there is a constant $C=C(n,s)$ such that
\begin{equation}\label{2.9}
\|u(t)\|_\infty\le C(n,s) \, t^{-\alpha}\|u_0\|_{L^1}^{\sigma}
\end{equation}
Dimensional considerations imply that $\alpha=n/(n+2-2s)$ and $\sigma
= (2-2s)/(n+2-2s)$. This fact is proved in \cite{CSV}. Notice that as $s\to1$ we have $\alpha\to 1$ and $\sigma\to 0$. Once we can control the constant $C(n,s)$ uniformly in $s$ we can pass to the limit $s\to 1$ to obtain the a priori estimate (already found  by Lin and Zhang in their study in dimension $n=2$ in \cite{lz})
\begin{equation}
u(x,t)\le C/t
\end{equation}
for the solutions of the limit equation obtained in the limit process, this will be done in the next section in Theorem \ref{thm.exlimit}.  This just requires  a closer inspection of the proof of \cite{CSV}.

- {\sc Energy solutions: } The basis of the boundedness analysis is a property that
goes beyond the definition of weak solution. We will review the formulas with attention to the constants that appear since this is not done in \cite{CSV}. The general energy property is as follows: for any $F$ smooth and such that $f=F'$ is bounded and nonnegative, we have   for every $0\le t_1\le t_2\le T$,
\begin{equation*}\label{entro}
\begin{array}{ll}
\dint F(u(t_2))\, dx -\dint F(u(t_1))\, dx & = - \dint_{t_1}^{t_2}\dint \nab [f(u)] u \nab p\, dx\, dt= \\
& -\dint_{t_1}^{t_2}\int \nab h(u) \nab (-\Delta)^{-s} u\, dx\, dt
\end{array}
\end{equation*}
where  $h$ is a function  satisfying  $h'(u)= u\,f'(u)$. We can write the last integral as a bilinear form
\begin{equation}
\int \nab h(u) \nab (-\Delta)^{-s} u\, dx = \mathcal{B}_s (h(u), u)
\end{equation}
and this bilinear form $\mathcal{B}_s$ is defined on the  Sobolev space $W^{1,2}(\ren)$ by
\begin{equation}\label{B}
\mathcal{B}_s(v,w)= C_{n,s} \iint  \nab v(x)\frac{1}{|x-y|^{n-2s}} \nab w(y)\, dx \, dy = \iint {\cal N}_{-s} (x,y) \nab v(x) \nab w(y) \, dx\, dy\end{equation}
where ${\cal N}_{-s}(x,y)= C_{n,s}|x-y|^{-(n-2s)}$ is the kernel of operator $(-\Delta)^{-s}$.
After some integrations by parts we also have
\begin{equation}\label{BB}
\mathcal{B}_s (v,w)= C_{n,1-s} \iint(v(x)-v(y)) \frac{1}{|x-y|^{n+ 2(1-s)}} (w(x)-w(y))\, dx\, dy\end{equation}
since $-\Delta {\cal N}_{-s}={\cal N}_{1-s}$. It is known \cite{Stein} that $\mathcal{B}_s (u,u)$ is an equivalent norm for the fractional Sobolev space $W^{1-s,2}(\ren)$ and we will see just below  that  $C_{n,1-s}\sim K_n(1-s)$ as $s \to 1$, for some constant $K_n$ depending only on $n$.

\begin{proposition} \label{pro2.1}The constant $C(n,s)$ in \eqref{2.9} is uniformly bounded as $s\to 1$, i.e.  for any solution to \eqref{gen.eq}, we have
$$\|u(t)\|_{\infty} \le \frac{C_n \|u_0\|_{L^1}^\sigma  }{t^\alpha} ,$$
 with
 $\alpha=n/(n+2- 2s)$ and $\sigma = (2-2s)/(n+2-2s)$.

 \end{proposition}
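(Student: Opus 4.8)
The plan is to revisit the iterative (De Giorgi / Moser-type) argument of \cite{CSV} that yields \eqref{2.9}, and to track every constant that enters it as $s\to1$, showing that the only potentially dangerous factor, namely $C_{n,1-s}$ in \eqref{B}--\eqref{BB}, actually \emph{helps}: it degenerates like $(1-s)$, and this degeneracy is precisely compensated by the classical fact (Bourgain--Brezis--Mironescu / \cite{Stein}) that $(1-s)\,\mathcal{B}_s(v,v)\to c_n\int|\nabla v|^2$ as $s\to1$. First I would record the scaling identity behind the exponents: if $u$ solves \eqref{gen.eq} then $u_\lambda(x,t)=\lambda^{n}u(\lambda x,\lambda^{n+2-2s}t)$ also solves it with the same mass, which forces $\alpha=n/(n+2-2s)$, $\sigma=(2-2s)/(n+2-2s)$, and means it suffices to prove a bound of the form $\|u(1)\|_\infty\le C_n\|u_0\|_{L^1}^\sigma$ with $C_n$ independent of $s$ for $s$ close to $1$.

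Next I would set up the energy inequality for truncations. Applying the general energy identity quoted just before the Proposition with $F$ an approximation of $(u-k)_+^2/2$ (so that $f(u)=(u-k)_+$ and $h'(u)=u f'(u)$, giving $h(u)=u(u-k)_+$ up to lower-order terms), one gets, for $w_k:=(u-k)_+$,
\begin{equation*}
\frac{d}{dt}\int w_k^2\,dx + c\,\mathcal{B}_s\big(h(u),u\big)\le 0 .
\end{equation*}
The key algebraic point is that $\mathcal{B}_s(h(u),u)\ge \mathcal{B}_s(w_k,w_k)$ plus a nonnegative remainder (because $h(u)-h(k)$ and $u-k$ have, on the level set, the same sign and $h$ is monotone there, so the bilinear form in the form \eqref{BB} is bounded below by the one with $v=w=w_k$). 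Hence
\begin{equation*}
\frac{d}{dt}\int w_k^2\,dx + c\,\mathcal{B}_s(w_k,w_k)\le 0 .
\end{equation*}
Now invoke \eqref{BB}: $\mathcal{B}_s(w_k,w_k)=C_{n,1-s}\,[w_k]_{W^{1-s,2}}^2$, and the fractional Sobolev (Gagliardo--Nirenberg) inequality $[v]_{W^{\beta,2}}^2\ge S_{n,\beta}\|v\|_{L^{2^{*}_\beta}}^2\|v\|_{L^1}^{-\theta}\cdots$ with $\beta=1-s\to0$. The subtle arithmetic is that $S_{n,1-s}$ blows up like $1/(1-s)$ exactly as $C_{n,1-s}\to0$ like $(1-s)$ — this is the content of the remark in the excerpt that $C_{n,1-s}\sim K_n(1-s)$ — so the product $C_{n,1-s}S_{n,1-s}$ stays bounded away from $0$ and $\infty$ uniformly for $s\in[1/2,1]$, say. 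With this uniform Sobolev-type inequality in hand, the Moser/De~Giorgi iteration on the levels $k_j=M(1-2^{-j})$ and times $t_j\uparrow 1$ closes with constants depending only on $n$, exactly as in \cite{CSV}, and produces $\|u(1)\|_\infty\le C_n\|u_0\|_{L^1}^\sigma$. Undoing the scaling gives the stated estimate.

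The main obstacle, and the only place real work is needed, is the \emph{uniformity} of the constant in the fractional Sobolev inequality as $s\to1$, i.e.\ proving $C_{n,1-s}S_{n,1-s}\gtrsim c_n>0$ uniformly. One clean way is to avoid the sharp constants altogether: use that for $\beta\in(0,1/2]$ one has the elementary bound $\|v\|_{L^2}^2\le C\,[v]_{W^{\beta,2}}^2 + C\,R^{-2\beta}\|v\|_{L^1}^2\cdot(\text{something})$ on balls of radius $R$, combine it with the trivial $\|v\|_{L^1}$ control on level sets, and feed that into the iteration — but then one must check the $\beta^{-1}$ that appears matches the $\beta$ from $C_{n,1-s}$. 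Alternatively, and more robustly, one passes to the limit: since $(1-s)\,\mathcal{B}_s(v,v)\to c_n\int|\nabla v|^2$ monotonically (or with explicit lower bounds for $s$ near $1$), one gets a uniform coercive lower bound $\mathcal{B}_s(v,v)\ge \tfrac{c_n}{2(1-s)}\int|\nabla v|^2$ for $s$ close to $1$, and then applies the \emph{classical} ($\beta=1$) Sobolev inequality, whose constant is $s$-independent; the factor $(1-s)^{-1}$ is harmless because it only improves the dissipation. Checking that this substitution is legitimate at the level of weak energy solutions (regularization, passage to the limit in the truncated energy identity) is the routine-but-necessary technical step, and it is already essentially contained in \cite{CV1, CSV}; here one only re-reads those arguments keeping the $s$-dependence explicit.
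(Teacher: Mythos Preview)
Your overall plan---re-run the \cite{CSV} iteration and track the $s$-dependence of every constant, with the bilinear form $\mathcal{B}_s$ as the crucial object---is exactly what the paper does. But the central technical claim you rest on is the wrong limit, and this makes both of your proposed closures fail.

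The exponent appearing in \eqref{BB} is $1-s$, not $s$. As $s\to 1$ the Gagliardo seminorm in play is $[v]_{W^{1-s,2}}$ with fractional order tending to~$0$, not to~$1$. The Bourgain--Brezis--Mironescu limit you invoke, $(1-\beta)[v]_{W^{\beta,2}}^2\to c_n\int|\nabla v|^2$, concerns $\beta\to 1^-$; here $\beta=1-s\to 0^+$, and the relevant result is Maz'ya--Shaposhnikova \cite{MazSh02}:
\[
\lim_{r\to 0^+}\, r\iint \frac{(v(x)-v(y))^2}{|x-y|^{n+2r}}\,dx\,dy \;=\; |\mathbb{S}^{n-1}|\int v^2\,dx.
\]
Combined with $C_{n,1-s}\sim K_n(1-s)$, this gives $\mathcal{B}_s(v,v)\to K_n|\mathbb{S}^{n-1}|\|v\|_2^2$, a \emph{finite positive} limit---not a blow-up, and certainly not a multiple of $\int|\nabla v|^2$. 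Consequently your ``uniform coercive lower bound'' $\mathcal{B}_s(v,v)\ge \tfrac{c_n}{2(1-s)}\int|\nabla v|^2$ is false (the left side stays bounded for fixed $v$), and the alternative of matching $C_{n,1-s}\sim(1-s)$ against a putative blow-up $S_{n,1-s}\sim(1-s)^{-1}$ in a fractional Sobolev embedding also misses the point: the Sobolev exponent $2^*_{1-s}=\tfrac{2n}{n-2(1-s)}\to 2$, so the embedding degenerates to an $L^2\to L^2$ statement and there is no integrability gain to feed into the iteration in the way you describe.

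What the paper actually does is simpler: it records that $\mathcal{B}_s(u,u)$ has a finite, nonzero limit (the $L^2$ norm squared) via Maz'ya--Shaposhnikova, and then refers back to the specific structure of the \cite{CSV} argument, in which this quantity enters in a way that remains uniformly effective as $s\to1$ (the vanishing Sobolev gain is exactly matched by the change of exponents $\alpha\to1$, $\sigma\to0$). The correction you need is to replace the BBM/$H^1$ picture by the Maz'ya--Shaposhnikova/$L^2$ picture, and then go through Section~4 of \cite{CSV} line by line rather than abstracting it as a generic De~Giorgi scheme with fixed Sobolev gain.
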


\begin{proof} It is based on a careful scrutiny of the ideas and proof of paper \cite{CSV}, Section 4.\\
\noindent (i) We  have   $C_{n,1-s}\sim K_n\,(1-s)$ as $s \to 1$. Indeed, if $0<s<1$ we can also use the  integral representation
\begin{equation}
(-\Delta)^{s}  g(x)= C_{n,s }\mbox{
P.V.}\int_{\mathbb{R}^n} \frac{g(x)-g(z)}{|x-z|^{n+2s}}\,dz  = {\cal N}_{s}\ast g (x)  \label{def-riesz}
\end{equation}
where P.V. stands for principal value and $C_{n,s}=\frac{4^{s}s\Gamma((n/2)+s)}{\pi^{n/2}\Gamma(1-s)}$
is a normalization constant, see for example \cite{Landkof,Stein}. Note that $C_{n,s }\sim K_n s$ as
$s\to 0$ and $C_{n,s }\sim C_n'(1-s)$ as $s\to 1$.

\medskip

\noindent (ii)
Next, we need the following result from  \cite{MazSh02}
$$
\lim_{r\to 0^+} r\,\iint \frac{(u(x)-u(y))^2}{|x-y|^{n+2r}}dxdy = |\mathbb{S}^{n-1}|\int u(x)^2\,dx.
$$
Setting $r=1-s$ and combining this with \eqref{BB} and $C_{n,1-s} \sim K_n (1-s)$ as $ s\to 1^-$, we deduce  that
$$
\lim_{s\to 1^-}  {\cal B}_s(u,u)= K_n|\mathbb{S}^{n-1}|\|u\|_2^2.
$$

\noindent (iii) We can now revisit the proof of boundedness of Theorem 4.1, in Section 4 of \cite{CSV}, and check that the constants in all the arguments are uniform in $s$ as $s\to 1$.
\end{proof}

We end this section with some  further properties of the solutions.

\medskip

- {\sc H\"older estimates} are proven for bounded energy solutions in  \cite{CSV} and hold for all $s\in (0,1)$. The H\"older exponent has to deteriorate as $s\to 1$ since we do not expect solutions of the limit equation to be continuous,  as can be seen in the case of the  ``vortex-patch solutions" below.

- {\sc More general data.} The property of boundedness and the $C^\alpha$ regularity are used in \cite{CSV} to obtain solutions with similar properties for all nonnegative initial data in $L^1(\ren)$. We will generalize that result to measures in Section \ref{sec2} below.


\section{Existence  of solutions  as limits of fractional diffusion}\label{sec2}

In dimension $n=2$,  existence and uniqueness of positive  $L^\infty$ solutions has been proven by Lin and Zhang in \cite{lz} (they also proved existence of positive measure-valued solutions). Existence with positive  initial data of finite energy has also been  proven by a gradient flow approach, in bounded domains of the plane in \cite{AmSr},  and in all of $\RR^2$ in \cite{ams}. In general dimension, existence can  be obtained by taking the limit $s\to 1$ in the solutions for $s<1$ constructed in \cite{CV1} as we will see below.

We examine here the situation $s=1$ as limit of the equations for $s<1$.
Since all the estimates above are uniform in $s$ as $s\to 1$,  we may
write the formal version for $s=1$, where the equation is
\eqref{eqs1}.
 Here we need to distinguish the case $n\ge 3$ from the case $n=2$ which we will examine separately below.
For $n \ge 3$, and noting that now ${\cal H}u =(-\Delta)^{1/2}p$,  the energy is
\begin{equation}\label{energys1}
E(u)=\int_{\RR^n} up\,dx=\int ({\cal H}u)^2\,dx= \int_{\RR^n} |\nabla p|^2\,dx,
\end{equation}
 which is an analogue of the energy of \cite{AmSr} (there, it was set in bounded domains) for which the equation is a Wasserstein gradient flow,
and its evolution is given by
\begin{equation}
\frac{dE}{dt}=-2\int_{\RR^n} u|\nabla p|^2\,dx.
\end{equation}
 This is a formal computation that will be justified as a consequence of the passage to the limit $s\to 1$ in formula \eqref{2.12}.
This energy would not make sense for $n=2$ because, unless $\int
u=0$, $p=(-\Delta)^{-1} u$ is never of gradient in $L^2$ in
dimension $2$ (the Green kernel is a logarithm), however a definition is given below in Section \ref{n2}.

\subsection{The existence result by $ s \to 1$ limit}
We next prove the following general result in all dimensions $n\ge 2$.

\begin{theorem}\label{thm.exlimit} For every $\mu \in {\cal M}^+ (\ren)$, there exists a continuous and nonnegative weak solution of equation \eqref{eqs1},  with data $\mu $
 in the  sense of Definition \ref{defsol}. It can be obtained as the limit $s\to 1$ of the solutions of the FPME with $0<s<1$ with data $u_k^0\in L^1(\ren)\cap L^\infty(\ren)$ that approximate $\mu$ in the sense of measures. In addition these solutions satisfy
\begin{equation}
\label{linfbound}
\|u(t)\|_{L^\infty} \le \frac{C}{t}\end{equation} for some $C>0$ depending only on $n$.
\end{theorem}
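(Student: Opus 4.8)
I would build $u$ by a double limit. Regularize the datum as $u_k^0=\mu*\rho_{1/k}\in L^1(\ren)\cap L^\infty(\ren)$ ($\rho$ a mollifier), so $u_k^0\ge 0$, $\|u_k^0\|_{L^1}=\|\mu\|=:M$ and $u_k^0\rightharpoonup\mu$ narrowly; pick $s_k\uparrow 1$; and let $u_k$ be the weak energy solution of \eqref{gen.eq} with $s=s_k$ and datum $u_k^0$ furnished by \cite{CV1,CSV}, with $p_k:=(-\Delta)^{-s_k}u_k$. By mass conservation $\|u_k(\cdot,t)\|_{L^1}=M$, and by Proposition \ref{pro2.1} $\|u_k(\cdot,t)\|_{L^\infty}\le C_n M^{\sigma_k}t^{-\alpha_k}$ with $\alpha_k=n/(n+2-2s_k)\to1$, $\sigma_k=(2-2s_k)/(n+2-2s_k)\to0$; interpolating, $\|u_k(\cdot,t)\|_{L^q}\le M^{1/q}(C_nM^{\sigma_k}t^{-\alpha_k})^{1-1/q}$ for all $q\in[1,\infty]$, uniformly in $k$. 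Already at this stage \eqref{linfbound} is forced on the limit: once $u_k(\cdot,t)\rightharpoonup u(\cdot,t)$ (below), $\|u(\cdot,t)\|_{L^\infty}\le\liminf_k\|u_k(\cdot,t)\|_{L^\infty}\le\liminf_k C_nM^{\sigma_k}t^{-\alpha_k}=C_n/t$, the dependence on $M$ evaporating because $\sigma_k\to0$.

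The heart of the matter is compactness, and within it the passage to the limit in the nonlinear flux $u_k\nabla p_k$, which requires \emph{strong} convergence of $\nabla p_k$ with constants that do not degenerate as $s_k\to1$. For $n\ge3$ the constant in the kernel $c_{n,s_k}|x|^{2s_k-n}$ of $(-\Delta)^{-s_k}$ stays bounded, so $\nabla I_{2s_k}$ has kernel majorized by $C_n|x|^{-(n-\beta_k)}$ with $\beta_k:=2s_k-1\in[\tfrac12,1)$; splitting near/far and, crucially, estimating the oscillation of $\nabla p_k$ against the \emph{fixed} H\"older exponent $\tfrac12$ rather than the sharp $\beta_k$ (whose constants blow up as $\beta_k\to1$), one gets $\|\nabla p_k(\cdot,t)\|_{C^{0,1/2}(\ren)}\le C_n\big(\|u_k(\cdot,t)\|_{L^\infty}+\|u_k(\cdot,t)\|_{L^1}\big)$, uniformly in $k$. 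Since $u_k\nabla p_k$ is bounded in $L^\infty_{loc}((0,\infty)\times\ren)$ and $\partial_t\nabla p_k$ equals a second-order Riesz-type operator of order $2-2s_k<1$ (multiplier norm uniform in $s_k$) applied to that flux, $\partial_t\nabla p_k$ is bounded in $L^\infty_{loc}((0,\infty);W^{-1,2}_{loc}(\ren))$; Aubin--Lions--Simon then yields, along a subsequence, $\nabla p_k\to\nabla p$ in $C_{loc}((0,\infty)\times\ren)$. Meanwhile $u_k$ is bounded in $L^\infty_{loc}$ and $\partial_t u_k=\nabla\cdot(u_k\nabla p_k)$ in $L^\infty_{loc}((0,\infty);W^{-1,\infty}_{loc})$, so $u_k\rightharpoonup u$ weak-$*$ in $L^\infty_{loc}((0,\infty)\times\ren)$ and, by $t$-equicontinuity, in $C_{loc}((0,\infty);L^\infty_{loc}(\ren)\text{-weak-}*)$, whence $u_k(\cdot,t)\rightharpoonup u(\cdot,t)$ for every $t>0$. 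Then $u_k\nabla p_k\rightharpoonup u\nabla p$ (weak-$*$ times strong), while $\langle(-\Delta)^{-s_k}u_k,\varphi\rangle=\langle u_k,(-\Delta)^{-s_k}\varphi\rangle\to\langle u,(-\Delta)^{-1}\varphi\rangle$ identifies $p=(-\Delta)^{-1}u$. Nonnegativity of $u$ and continuity of $t\mapsto u(\cdot,t)$ into $\mathcal M^+(\ren)$ are inherited from these convergences (joint continuity cannot hold — cf. the vortex-patch solutions below).

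It remains to handle $t\approx0$, where only $\|u_k(\cdot,t)\|_{L^1}=M$ is available. The key is a Hardy--Littlewood--Sobolev bound: with $\nabla I_{2s_k}\colon L^2\to L^{b_k}$, $1/b_k=\tfrac12-\beta_k/n$, and $u_k$ interpolated between $L^1$ and $L^\infty$,
\begin{equation*}
\int_{\ren}u_k(\cdot,t)\,|\nabla p_k(\cdot,t)|\,dx \le C_n\,M^{1+\beta_k/n}\,t^{-(1-\beta_k/n)}\qquad(0<t\le 1),
\end{equation*}
whose exponent $1-\beta_k/n\to 1-1/n<1$; hence $\iint_0^\tau u_k|\nabla p_k|\,dx\,dt\le C_nM^{1+\beta_k/n}\tau^{\beta_k/n}/(\beta_k/n)\to0$ as $\tau\to0$, uniformly in $k$. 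This makes the flux equi-integrable up to $t=0$, so one may pass to the limit in $\iint_0^T(u_k\phi_t-u_k\nabla p_k\cdot\nabla\phi)\,dx\,dt+\int\phi(x,0)u_k^0\,dx=0$ for the test functions of Definition \ref{defsol} (the linear term near $t=0$ being controlled by $\|u_k(\cdot,t)\|_{L^1}=M$), giving the weak formulation of Definition \ref{defsol} for $u$; testing with $\phi(x,t)=\psi(x)\theta(t)$, $\theta(0)=1$, and using $|\int u_k(\cdot,t)\psi-\int u_k^0\psi|\le C\|\nabla\psi\|_\infty M^{1+\beta_k/n}t^{\beta_k/n}$ yields $u(\cdot,t)\rightharpoonup\mu$ as $t\to0^+$. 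The same Riesz and Calder\'on--Zygmund estimates give $p\in L^1((0,T);W^{1,q}_{loc}(\ren))$ and $u\nabla p\in L^1(Q_T)$, so $u$ is a weak solution in the sense of Definition \ref{defsol}. Passing \eqref{2.12} to the limit — using lower semicontinuity of the energy and of $\iint u|\nabla p|^2$, the bound $E_{s_k}(u_k(\tau))\le\|p_k(\tau)\|_{L^\infty}M$ being uniform in $k$ — finally equips $u$ with the energy inequality.

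Two warnings. For $n=2$ the energy $E(u)=\int up$ is infinite (logarithmic kernel) and $c_{2,s_k}$ blows up as $s_k\to1$; the compactness scheme still applies after subtracting from $p_k$ the divergent additive constant (irrelevant for $\nabla p_k$), but the energy must be replaced, and the whole $n=2$ case is run separately in Section \ref{n2}. Also, extracting from \eqref{2.12} a clean monotonicity of $t\mapsto E(u(t))$ for $n\ge3$ needs some care, since it is the $\dot H^{-s_k}$-seminorm of $u_k$ that must be matched to the $\dot H^{-1}$-seminorm of $u$. I expect the main obstacle to be precisely this uniform-in-$s$ bookkeeping for $\nabla p_k$ near $t=0$: it is there that the degeneration of $(-\Delta)^{-s}$ as $s\to1$ (blow-up of kernel constants, loss of smoothing order) collides with the absence of an $L^\infty$ bound at the initial time.
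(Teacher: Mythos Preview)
Your overall architecture matches the paper's: mollify $\mu$, take solutions $u_k$ of the FPME with $s_k\uparrow 1$, exploit Proposition~\ref{pro2.1} for uniform $L^1\cap L^\infty$ control on $\{t\ge\tau\}$, obtain spatial regularity of $\nabla p_k$ and then time-compactness via Aubin--Simon, and finally pass to the limit in the weak formulation. The $L^\infty$ bound \eqref{linfbound} is obtained the same way, by letting $\alpha_k\to1$, $\sigma_k\to0$.

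There are, however, two genuine technical differences worth noting. First, for spatial compactness of $\nabla p_k$ the paper uses the Riesz embedding $\nabla p_k\in L^\infty((\tau,T);W^{s',q}_{loc})$ with $0<s'<2s_k-1$, whereas you argue a uniform $C^{0,1/2}$ bound by hand, carefully fixing the H\"older exponent below the sharp $2s_k-1$ so the constants do not blow up; both yield the same Aubin--Simon input. Second, and more substantially, the treatment of the initial layer $t\in[0,\tau]$ is different. The paper controls the flux via the \emph{energy inequality} \eqref{2.12}: from Riesz embedding it bounds $\int|\nabla\mathcal H_{s_k}u_k|^2\,dx\le C\,t^{2/(n+2-2s_k)-1}$, feeds this into \eqref{2.12} to get $\int_t^\infty\!\!\int u_k|\nabla p_k|^2\le C\,t^{2/(n+2-2s_k)-1}$, and then Cauchy--Schwarz on dyadic shells $[2^{-m},2^{-m+1}]$ shows the near-zero contribution is summable. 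You instead bound $\int u_k|\nabla p_k|\,dx$ directly by interpolating $u_k$ between $L^1$ and $L^\infty$ and using HLS for $\nabla I_{2s_k}$ (equivalently the split $\|\nabla p_k\|_\infty\le C\|u_k\|_\infty^{1-\beta_k/n}\|u_k\|_1^{\beta_k/n}$), obtaining an exponent strictly below $1$ uniformly in $k$, hence integrability down to $t=0$ without invoking the energy at all. Your route is a bit more elementary here; the paper's has the advantage of tying directly into the dissipative structure, which is reused elsewhere (e.g.\ in the asymptotic analysis). Note only that your displayed exponent $1-\beta_k/n$ is slightly off: with the genuine bound $\|u_k(t)\|_\infty\le C\,t^{-\alpha_k}$ one gets $t^{-\alpha_k(1-\beta_k/n)}=t^{-(n-\beta_k)/(n+1-\beta_k)}$, which is still uniformly $<1$, so the conclusion stands.
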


\begin{proof}
Existence is proved by passing to the limit $s\to 1$ in the solutions of the FPME \eqref{eq1}. The outline is similar to the proof of Theorem 13.1 in \cite{CSV} where $s$ was considered constant, but we cannot use the continuity  in time of the solution, since it may be lost in the limit $ s \to 1$.

 Let us consider initial data $u^0=\mu $ in $\mathcal{M}^+(\mr^n)$,  and a sequence $u_k^0$  of nonnegative functions approaching $u^0$ as measures as $ k \to \infty$. (If $u^0\in L^1$ we may simply take $u^0_k=u^0$.) Let us then    consider the family of solutions
$u_k(x,t)$   with parameter $s_k\to 1$  of \eqref{eq1} given by Theorem 4.1  of \cite{CV1} or Theorem  13.1 of \cite{CSV}, and let $p_k(x,t)=(-\Delta)^{-s} u_k(x,t)$ be the corresponding pressures.

\noindent (i) In a first step, we assume that $n\ge 3$, and prove that the limit exists (along subsequences $s_{k_j}\to 1$) and is a  solution of the equation in the sense of Definition~\ref{defsolweak}, without discussing the question of  the initial data.
 By Proposition \ref{pro2.1}, we have that for any $\tau>0$,
\begin{equation}\label{unifuk}
\|u_k\|_{L^\infty ((0, \tau), L^1_x)} \le C\end{equation}
where $C>0$ only depends on $\tau $.
  In order to  pass to the limit in the weak formulation
\begin{equation}
\iint (u_k\,\phi_t- u_k \nabla p_k \cdot\nabla\phi)\,dxdt=0
\end{equation} for $\phi$ compactly supported in $\mr^n \times (0,T)$,  it suffices to have  weak convergence of $u_k$, together with  strong convergence of $p_k$ and $\nabla p_k$.
Let us work on $[\tau, T)$  containing the support of $\phi$.
From \eqref{unifuk} we have weak convergence (up to subsequence extraction) of $u_k$ in $L^q([\tau, T)\times B_R)$ for any $q\ge 1$, $R>0$.

- For $p_k$, we may invoke  the embedding theorems associated to the Riesz kernels, which tell us that $p_k=(-\Delta)^{-s} u_k $ is uniformly bounded in $L^\infty((\tau, T), L^q_x(B_R))$ for all large $q$ and $R>0$. More precisely, $q>n/(n-2s)$, which is uniform in $s\approx1 $ if $n\ge 3$.

- Moreover, $\nabla p_k$ has a similar bound in spaces of the form $L^\infty((\tau, T), W^{s',q}_{loc})$ with $s'<2s-1$ and convenient $q>n/(n-s)$. Since we are interested in $s\to 1$, we can restrict to  $2s-1>0$ so we can take $s'>0$ and both $(p_k)$ and $(\nabla p_k)$ are a locally compact family with respect to the space variable.

- We still need compactness in time  for $p_k$ and $\nabla p_k$, and this is obtained from Aubin-Simon's results,  cf. \cite{Aubin}, \cite{Simon}.
First, for $p_k$, we use the facts that
\begin{equation}\label{as1}
p_k \ \text{is bounded in } \ L^\infty((\tau, T), L^p( B_R))\end{equation}
\begin{equation*}\partial_t p_k = (-\Delta)^{-s_k}\partial_t u_k= (-\Delta)^{-s_k} \nab \cdot (u_k p_k)\end{equation*}
and since $u_k p_k$ is bounded in $L^q(B_R)$ for any $q>0$ and uniformly in $t\in [\tau, +\infty)$, it follows that
\begin{equation}\label{as2}\partial_t p_k  \ \text{is bounded in } \ L^\infty((\tau, T), W^{2s_k-1, q}(B_R)).\end{equation}
 \eqref{as1} and \eqref{as2} ensure, via Aubin-Simon's theorem, that $p_k$ is compact in $C([\tau, T], L^q(B_R))$, for any $q\ge 1$ and $R>0$.
The proof for $\nab p_k$ is entirely similar:
first, from the above, we have
\begin{equation}\label{as3}
\nab p_k  \ \text{is bounded in } \ L^\infty((\tau, T), W^{s'_k,q}(B_R))\end{equation}
\begin{equation*}\partial_t \nab p_k = (-\Delta)^{-s_k}\partial_t u_k= \nab (-\Delta)^{-s_k} \nab \cdot (u_k p_k)\end{equation*}
and since $u_k p_k$ is bounded in $L^q(B_R)$ for any $q>0$ and uniformly in $t\in [\tau, T]$, it follows that
\begin{equation*}\partial_t \nab p_k\  \text{is bounded in } \ L^\infty((\tau, T), W^{2s_k-2, q}(B_R)).\end{equation*}
Again, these suffice to apply Aubin-Simon's result and obtain that  $\nab p_k$ is compact in say $C([\tau, T], L^q(B_R))$ for every $\tau>0, T>\tau, R>0$, $q>1$. The limits after extraction of $u_k$ and $p_k$ are denoted $u$ and $p$.

Taking $\phi$ smooth compactly supported in space and vanishing for $t\ge T$, it follows from these convergence results  that
\begin{equation}\label{convttau}\lim_{k\to \infty}\int_{0}^\infty \int (u_k\phi_t - u_k \nab p_k\cdot \nab \phi)\, dx dt = \int_{0}^\infty \int (u   \phi_t - u \nab p \cdot \nab \phi)\, dx dt .\end{equation}

\medskip

(ii) When $n=2$ we do not have a uniform bound of the pressures $p_k$ in $L^q$ spaces valid for all $s\approx 1$, since this is a limit case of the Sobolev embeddings. Fortunately, the pressures do not appear directly in the equation, but only through the pressure gradients for which the estimates are uniform in the range $s\approx 1$. The rest of the argument follows.

\medskip

(iii) There remains to control terms corresponding to the  time interval $[0, \tau]$ in order to complete the proof that the solution takes on the initial data, as in Definition \ref{defsol}.  For that we follow \cite{CSV}, proof of  Theorem 13.1.  We consider $\phi\in C^1 (\mr^n\times [0, \infty)) $ compactly supported in space and vanishing for $t \ge T$. First, let $\tau>0$ be arbitrary. Applying the result of  \eqref{convttau} we have that
\begin{equation}\label{convttau2}\lim_{k\to \infty}\int_{\tau}^\infty \int (u_k\phi_t - u_k \nab p_k\cdot \nab \phi)\, dx dt = \int_{\tau}^\infty \int (u   \phi_t - u \nab p \cdot \nab \phi)\, dx dt .\end{equation}
In addition, by weak convergence of $u_k^0$ to $u^0$, we have
\begin{equation}\label{convttau3}\lim_{k\to \infty} \int u_k^0(x) \phi(x,0)\, dx= \int \int u^0(x) \phi(x,0)\, dx  .\end{equation} There remains to show that the contributions on the interval $[0, \tau]$ can be made small.

Starting again from Proposition \ref{pro2.1} we have
$$\|u_k(t)\|_{L^\infty}\le C(u_0) t^{-\alpha} \quad \alpha= n/(n+2-2s_k),$$
 which yields by conservation of mass
 $$\|u_k(t)\|_{L^p}^p \le \|u_k(t)\|_{L^1} \|u_k(t)\|_{L^\infty}^{p-1} \le C (u_0) t^{-\alpha(p-1)}.$$

By Riesz embedding we deduce
$$\int |\nab \mathcal{H}_{s_k} u_k|^2\, dx \le C \|u_k(t)\|_{L^p}^2 \le C t^{-\alpha(p-1)\frac{2}{p}},\qquad   \frac{1}{2}=\frac{1}{p}-\frac{s_k}{n}$$ where we recall $\mathcal{H}_s u=(-\Delta)^{-s/2}u$ for any $u$,
and combining the definitions of $\alpha$ and $p$, this yields
$$\int |\nab \mathcal{H}_{s_k} u_k|^2 \le C t^{\frac{2}{n+2-2s_k}-1}.$$
Combining with the energy inequality \eqref{2.12}, we deduce that we also have
\begin{equation}\label{decayup}
\int_{t}^\infty \int u_k |\nab p_k|^2 \le C t^{\frac{2}{n+2-2s_k}-1}.\end{equation}
We may now bound the remaining contribution
$\int_{0 }^\tau \int (u_k\phi_t - u_k \nab p_k\cdot \nab \phi)\, dx dt$
as  follows: for $t_m=2^{-m}$, we
first write $$\left|\int_{t_m}^{t_{m-1}} \int u_k \phi_t \right|\le t_{m-1}\|\phi\|_{C^1} \|u_k\|_{L^\infty((0, \infty), L^1)}\le C t_{m-1} $$
and second
\begin{multline*}\left|\int_{t_m}^{t_{m-1}} \int u_k \nab p_k\cdot \nab \phi\right|\le \| \phi\|_{C^1}
\(\int_{t_m}^{t_{m-1}} \int u_k \)^\hal\( \int_{t_m}^{t_{m-1}} \int u_k |\nab p_k|^2 \)^\hal\\
\le C t_{m-1}^\hal t_m^{\frac{1}{n+2-2s_k}-\frac{1}{2}   }
\le 2C t_m^\hal t_m^{-\hal + \frac{1}{n+2-2s_k}}= C t_m^{\frac{1}{n+2-2s_k}}.\end{multline*}
Summing over $m$ we find  that  the contributions on $[0 , \tau]$ tend to $0$ as $\tau $ tends to $0$ and combining with \eqref{convttau2}--\eqref{convttau3} we obtain
\begin{multline*}\lim_{k\to \infty}\int_{0}^\infty \int (u_k\phi_t - u_k \nab p_k\cdot \nab \phi)\, dx dt + \int u^0_k(x)\phi(x,0)\, dx\\
= \int_{0}^\infty \int (u   \phi_t - u \nab p \cdot \nab \phi)\, dx dt + \int u^0 (x) \phi(x,0)\, dx.\end{multline*}
as desired.

\noindent (iii) Finally,  by passing to the limit $s_k\to 1$ in \eqref{2.9} and using the fact that $C(n,s)$ does not blow up in this limit, we obtain the universal estimate \eqref{linfbound}. \end{proof}

\begin{remark}
The handling of the initial data in the proof above could be replaced by showing that the initial data is taken via a result of continuity of the solution in Wasserstein metric, see Lemma \ref{lemcont} below.
\end{remark}

 When $s_k$ is taken fixed this is the proof of existence of solutions with measure data that improves Theorem 10.1 of \cite{CSV}.

\begin{theorem}\label{exis.fpme}  For every $\mu\in {\cal M}^+(\ren)$, there exists a continuous and nonnegative weak solution of FPME, $0<s<1$, with data $\mu$
 in the  sense of Definition \ref{defsol}. It is a bounded energy solution for all $t\ge \tau >0$.
\end{theorem}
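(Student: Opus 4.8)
The plan is to rerun the proof of Theorem~\ref{thm.exlimit}, but now with the exponent $s\in(0,1)$ held \emph{fixed}. This is in fact easier than the $s\to1$ limit: no constant has to be controlled uniformly in $s$, there is no borderline Sobolev issue for $n=2$, and — crucially — the H\"older estimates of \cite{CSV} are uniform along the approximation and therefore survive the passage to the limit, which is exactly what is needed to produce a \emph{continuous} limit as required by Definition~\ref{defsol}.

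\textbf{Approximation and uniform bounds.} First I would take $u_k^0\in L^1(\ren)\cap L^\infty(\ren)$, nonnegative, with $\int u_k^0\,dx\le\mu(\ren)$, converging to $\mu$ weakly-$*$ as measures (e.g.\ a mollification of $\mu$). By Theorem~4.1 of \cite{CV1} and Theorem~13.1 of \cite{CSV}, each $u_k^0$ yields a continuous nonnegative bounded-energy solution $u_k$ of \eqref{gen.eq}; set $p_k=(-\Delta)^{-s}u_k$. Conservation of mass gives $\|u_k(t)\|_{L^1}\le\mu(\ren)$, and the boundedness effect \eqref{2.9} gives $\|u_k(t)\|_{L^\infty}\le C(n,s)\,t^{-\alpha}\mu(\ren)^{\sigma}$; interpolating, $(u_k)$ is bounded in $L^\infty((\tau,T),L^q(\ren))$ for every $\tau>0$ and every $q\in[1,\infty]$. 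Since $s<1$ is fixed we have $n-2s>0$ for all $n\ge2$, so the Riesz-potential/Sobolev embeddings give uniform bounds for $p_k$ in $L^\infty((\tau,T),L^q_{loc})$ and for $\nabla p_k$ in $L^\infty((\tau,T),W^{s',q}_{loc})$ with some $s'>0$, hence local spatial compactness of $(p_k)$ and $(\nabla p_k)$.

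\textbf{Time compactness, limit, and continuity.} For equicontinuity in time I would argue exactly as in the proof of Theorem~\ref{thm.exlimit}: from $\partial_t p_k=(-\Delta)^{-s}\nabla\cdot(u_kp_k)$ and the boundedness of $u_kp_k$ in $L^q_{loc}$ uniformly on $[\tau,T]$ one gets $\partial_t p_k$ bounded in $L^\infty((\tau,T),W^{2s-1,q}_{loc})$, and likewise $\partial_t\nabla p_k$ bounded in $L^\infty((\tau,T),W^{2s-2,q}_{loc})$; Aubin--Simon (\cite{Aubin,Simon}) then yields compactness of $(p_k)$ and $(\nabla p_k)$ in $C([\tau,T],L^q_{loc})$. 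Extracting a subsequence, $u_k\rightharpoonup u$ weakly in $L^q_{loc}((\tau,T)\times\ren)$ and $p_k\to p$, $\nabla p_k\to\nabla p$ strongly, which lets me pass to the limit in the weak formulation tested against any $\phi\in C^1_c(\ren\times(0,T))$. Moreover the H\"older estimates of \cite{CSV} are uniform in $k$ (constants depending only on $n,s,\mu(\ren)$), so $(u_k)$ is equicontinuous on compact subsets of $Q_T$; hence $u_k\to u$ locally uniformly there and $u\in C(Q_T)$.

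\textbf{Initial data, energy property, and the main obstacle.} To recover the initial-data term of Definition~\ref{defsol} I would copy the last step of the proof of Theorem~\ref{thm.exlimit}: combining \eqref{2.9} with the energy inequality \eqref{2.12} gives, for fixed $s<1$, the decay $\int_t^\infty\!\!\int u_k|\nabla p_k|^2\le C\,t^{\beta}$ with $\beta=\frac{2}{n+2-2s}>0$ (as in \eqref{decayup}); a dyadic splitting of $(0,\tau]$ together with Cauchy--Schwarz and $\|u_k\|_{L^\infty((0,\infty),L^1)}\le\mu(\ren)$ bounds $\big|\int_0^\tau\!\!\int(u_k\phi_t-u_k\nabla p_k\cdot\nabla\phi)\big|$ by $C\,\tau^{\min(1,\beta)}$ uniformly in $k$. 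Letting $k\to\infty$, using $\int u_k^0\phi(\cdot,0)\to\int\phi(\cdot,0)\,d\mu$, and then $\tau\to0$ produces the full weak formulation with datum $\mu$; alternatively one may deduce the initial condition from continuity in the Wasserstein metric, Lemma~\ref{lemcont}. Finally $\|u(t)\|_\infty\le C(n,s)\,t^{-\alpha}\mu(\ren)^\sigma$ and \eqref{2.12} pass to the limit, so $u$ is a bounded-energy solution for $t\ge\tau>0$. As in Theorem~\ref{thm.exlimit}, the one genuinely delicate point is the handling of the initial datum, i.e.\ the smallness of the near-$t=0$ contribution \emph{uniformly in} $k$; everything else is a routine rerun of \cite{CSV}, simplified by the fact that nothing needs to be uniform in $s$ and that the H\"older bounds are available to give continuity of the limit.
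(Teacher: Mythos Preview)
Your proposal is correct and matches the paper's approach exactly: the paper does not give a separate proof for this theorem but simply notes, immediately before its statement, that ``when $s_k$ is taken fixed this is the proof of existence of solutions with measure data,'' i.e.\ rerun the argument of Theorem~\ref{thm.exlimit} with $s$ constant. Your write-up is in fact more explicit than the paper on one point that matters here and not in the $s\to1$ limit: you correctly invoke the uniform (in $k$) H\"older estimates of \cite{CSV} to upgrade weak convergence of $u_k$ to locally uniform convergence and thereby obtain the continuity of $u$ required by Definition~\ref{defsol} for $s<1$.
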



\begin{remark} We will provide in Proposition \ref{pross}  below another approach to the existence of solutions for $s=1$ via the vanishing  viscosity method. \end{remark}


\subsection{Definition of the energy in two space dimensions}\label{n2}

In dimension 2, even though $p=(-\Delta)^{-1} u$ is not in general
of finite energy $\int |\nab p|^2<\infty$, we may still define an
energy by a ``renormalization" procedure. Here we reproduce the
definition of \cite{ams}, which itself follows a similar
construction for Ginzburg-Landau in \cite{os} and \cite{bs}.

The first idea is to define the energy  in a ``renormalized way",
over functions $u$ such that $\int u= k $ with $k$ a fixed constant  (we may do  this without
loss of generality   since the integral is preserved by the
evolution) by
\begin{equation}\label{energy1}
\mathcal{E}(u) = \lim_{R\to \infty} \left( \int_{B(0,R)} |\nab p|^2 - \frac{k^2}{2\pi}\log R\right).
\end{equation}
Contrarily to $\int |\nab p|^2$, it is possible to make this energy
finite even if $k \neq 0$.

Another definition is the following: consider any smooth positive
compactly supported $u_0$ with $\int_{\RR^2} u_0=k$, and let $p_0=
(-\Delta)^{-1} u_0$. For any $u$ with $\int u=k$  such that $w:=
p-p_0$ is in $H^1(\RR^2)$, we define  the energy by
\begin{equation}\label{energy2}
E(u) = \int_{\RR^2} |\nab w|^2 + 2 \int_{\RR^2} u_0 w
\end{equation}
In fact, expanding $w$ as $p-p_0$  and noting that $\int_{B(0,R)}
|\nab p_0|^2 \sim \frac{k^2}{2\pi}  \log R + cst $ as $R \to
\infty$, and using an integration by parts we see that if $w\in
H^1(\RR^2)$, the two definitions differ only by the constant
(depending only on the choice of $u_0$). So, for energy we may
take $E$.

Formally, along the flow, since $w_t= p_t $, we have
\begin{eqnarray*}
\frac{d}{dt} E(u(t)) & = & 2\int_{\RR^2} \nab w \cdot \nab w_t + 2\int_{\RR^2} u_0 w_t\\
& = & - 2\int_{\RR^2} w \Delta p_t + 2  \int_{\RR^2} u_0 p_t\\
& = & 2 \int_{\RR^2}  (w +p_0)   u_t  = 2 \int_{\RR^2} p u_t\\
 & = & - \int_{\RR^2} |\nab p|^2 u. \end{eqnarray*}

An alternate approach would have been to view the $(-\Delta)^{-1}$ operator as a limit of $
(-\Delta + hI)^{-1}$ as $ h \to 0$.


\section{Uniqueness of bounded  solutions for $s=1$}\label{secuniq}
In this section we focus on obtaining a uniqueness result for $s=1$ for bounded solutions. Uniqueness is not expected in all situations, see \cite{ags}.  We also recall that there is no maximum principle so uniqueness cannot follow from comparison arguments.
The proof works only for $s=1$; no  uniqueness proof is known for the FPME in dimensions $n\ge 2$.

  A natural space in which to work for such continuity equations is the space of probability measures endowed with the Kantorovich-Rubinstein-Wasserstein distance (commonly called Wasserstein), indeed the PDE is a gradient flow for the $2$-Wasserstein distance (point of view which was exploited in \cite{AmSr,ams}.
For completeness, we recall (see e.g. \cite{ags}) that the $p$-Wasserstein distance between two probability measures $\mu $ and $\nu$ on $\mr^n$ (and by extension between two positive measures with same mass) is  defined by
\begin{equation}\label{wp}
W_p(\mu, \nu)^p= \inf_{\gamma \in \Gamma(\mu, \nu)} \iint_{\mr^n \times\mr^n} |x-y|^p \, d\gamma(x, y)\end{equation}
where $\Gamma(\mu, \nu)$ is the set of probability measures on $\mr^n \times \mr^n$ whose first marginal is $\mu$ and whose second marginal is $\nu$.
This  inf is achieved by an ``optimal transport plan", and when $1 \le p <\infty$, $W_p$ defines a distance.

\begin{theorem}\label{th31}
There exists at most a unique solution of the initial-value problem for Equation \eqref{eqs1}  in $L^\infty((0, T): L^\infty(\RR^n))$.
\end{theorem}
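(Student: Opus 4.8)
The plan is to use the gradient-flow / optimal-transport structure of the equation and prove a Gronwall-type estimate for the squared $2$-Wasserstein distance $W_2(u_1(t),u_2(t))^2$ between two bounded solutions with the same initial data. First I would note that by conservation of mass we may normalize $\int u_i = 1$, so that $u_1(t), u_2(t)$ are probability measures, and that the $L^\infty$ bound $\|u_i(t)\|_\infty \le C/t$ from Theorem~\ref{thm.exlimit} (together with conservation of mass) gives us a priori control that will be essential for the key monotonicity inequality. The velocity fields are $\mathbf v_i = -\nabla p_i$ with $p_i = (-\Delta)^{-1}u_i$, i.e. $\nabla p_i = \nabla \mathcal K_1 u_i$ is (up to constants) the Riesz potential gradient of $u_i$, a singular-integral operator of convolution type.

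\medskip

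The core computation is to differentiate $t \mapsto \frac12 W_2(u_1(t),u_2(t))^2$ along the two flows. Using the Benamou--Brenier / Otto-calculus formula for the derivative of the Wasserstein distance along solutions of continuity equations $\partial_t u_i + \nabla\cdot(u_i \mathbf v_i)=0$, one gets
\begin{equation*}
\frac{d}{dt}\,\frac12 W_2(u_1(t),u_2(t))^2 = \int_{\RR^n} (\mathbf v_1(x) - \mathbf v_2(y))\cdot(x-y)\,d\gamma_t(x,y),
\end{equation*}
where $\gamma_t$ is the optimal plan between $u_1(t)$ and $u_2(t)$. The task is then to bound the right-hand side by $C\,W_2(u_1(t),u_2(t))^2$ (with $C$ possibly depending on $t$ but locally integrable, e.g. $C = C(n)/t$ from the $L^\infty$ bound), after which Gronwall and $W_2(u_1(0),u_2(0))=0$ finish the proof. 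The crucial structural fact making this work is that $\mathbf v_i = -\nabla (-\Delta)^{-1}u_i$ is a \emph{monotone} (order-reversing) operator of the density in the relevant sense: the repulsive Coulomb-type interaction yields
\begin{equation*}
\int (\mathbf v_1(x)-\mathbf v_2(y))\cdot(x-y)\,d\gamma_t \le \big(\|u_1(t)\|_\infty + \|u_2(t)\|_\infty\big)\,W_2(u_1(t),u_2(t))^2,
\end{equation*}
or something close to it. This is precisely the estimate exploited by Bertozzi--Laurent--L\'eger \cite{BLL}, by Ambrosio--Mainini--Serfaty \cite{ams}, and by Loeper for the Euler/Vlasov--Poisson setting; one splits $\mathbf v_1(x)-\mathbf v_2(y) = (\mathbf v_1(x)-\mathbf v_1(y)) + (\mathbf v_1(y)-\mathbf v_2(y))$. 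For the first term one uses that $-\nabla(-\Delta)^{-1}u_1$ has a gradient bounded in terms of $\|u_1\|_\infty$ in an averaged/monotone sense (the key point: $D^2(-\Delta)^{-1}u$ is a singular integral of $u$, but the \emph{symmetric combination} contracted with $(x-y)\otimes(x-y)$ and integrated against a transport plan is controlled by $\|u\|_\infty$, by a Coulomb-kernel computation). For the second term, $\|\mathbf v_1 - \mathbf v_2\|_{L^2(u_2)}$ is estimated via a stability estimate for the Riesz potential in terms of $W_2(u_1,u_2)$, again using the $L^\infty$ bounds --- this is the Loeper-type inequality $\|\nabla(-\Delta)^{-1}(u_1-u_2)\|_{L^2} \le C \max(\|u_1\|_\infty,\|u_2\|_\infty)^{1/2} W_2(u_1,u_2)$.

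\medskip

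\textbf{Main obstacle.} The hard part will be making the Wasserstein-derivative computation rigorous at the low regularity available: the solutions are only bounded and $C/t$, not smooth, the velocity field $\nabla p_i$ is only in some fractional Sobolev space (not Lipschitz), and the optimal plan $\gamma_t$ need not be induced by a map. One must justify the chain rule for $t\mapsto W_2(u_1(t),u_2(t))^2$ along weak solutions of the continuity equation --- this requires $\mathbf v_i \in L^2(u_i\,dt)$ (which we have, from the energy dissipation inequality \eqref{2.12}, giving $\int u_i|\nabla p_i|^2 < \infty$) plus an absolute-continuity-in-time argument à la Ambrosio--Gigli--Savaré. A clean way to handle all of this is to work with the approximating FPME solutions (which are smooth, or at least $C^\alpha$ with better estimates) or with a further regularization, prove the contraction estimate there with constants uniform as the regularization is removed, and pass to the limit; alternatively one invokes the general superposition/stability machinery for Wasserstein gradient flows. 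A secondary technical point is the handling of the $t\to 0$ endpoint: since the $C/t$ bound degenerates, one needs the Gronwall constant $\int_\epsilon^t C(n)/\tau\,d\tau = C(n)\log(t/\epsilon)$ to be absorbed --- but because $W_2(u_1(\epsilon),u_2(\epsilon))\to 0$ as $\epsilon\to0$ (by continuity of the solutions in $W_2$ down to the common initial datum, cf. the Remark and Lemma~\ref{lemcont} referenced in the text), one still concludes $W_2(u_1(t),u_2(t))=0$ provided the vanishing of the initial distance beats the logarithmic blow-up; in dimension $2$ and higher a short argument (or a slightly sharper near-$t=0$ estimate) closes this.
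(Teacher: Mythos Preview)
Your plan is close in spirit to the paper's proof, and your treatment of the second term of the splitting via Loeper's inequality $\|\nabla(-\Delta)^{-1}(u_1-u_2)\|_{L^2}\le(\max_i\|u_i\|_\infty)^{1/2}W_2(u_1,u_2)$ is exactly what the paper uses. But the first term contains a genuine gap. You assert that $\int (\mathbf v_1(x)-\mathbf v_1(y))\cdot(x-y)\,d\gamma_t$ is controlled by $C\|u_1\|_\infty W_2^2$ thanks to some ``Coulomb-kernel monotonicity'', giving a linear Gronwall inequality. This bound is not available: $D^2(-\Delta)^{-1}u_1$ is a Calder\'on--Zygmund operator applied to $u_1$ and lies only in $BMO$, not $L^\infty$, when $u_1\in L^\infty$; and there is no sign structure to exploit, since the kernel $K(w)=w/|w|^n$ satisfies $\mathrm{tr}\,\nabla K=0$ away from the origin (it is the gradient of a harmonic function), so $(K(a)-K(b))\cdot(a-b)$ has no definite sign. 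The best one obtains is the \emph{log-Lipschitz} estimate
\[
|\nabla p(x)-\nabla p(y)|\le C(\|u\|_{L^1}+\|u\|_{L^\infty})\,|x-y|\bigl(1+|\log|x-y||\,\mathbf 1_{|x-y|\le 1/e}\bigr),
\]
which is the paper's key Lemma. This produces not Gronwall but an \emph{Osgood}-type differential inequality $Q'\le CQ\sqrt{1+\log^2(1/Q)}$; one still concludes $Q\equiv 0$ from $Q(0)=0$ because $\int_0 \frac{dv}{v\sqrt{1+\log^2 v}}=+\infty$, but the mechanism is different from what you describe.

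The paper also sidesteps your ``main obstacle'' (justifying the derivative of $W_2^2$ along weak solutions) by working in Lagrangian rather than Eulerian coordinates: since $\nabla p_i$ is log-Lipschitz the characteristic flows $X_i$ exist and are unique, one has $u_i=(X_i)_\#u^0$, and the quantity $Q(t)=\int u^0|X_1-X_2|^2$ both dominates $W_2^2(u_1,u_2)$ and has an elementary time derivative. A further technical point you would need is how to pass from the pointwise log-Lipschitz bound to an inequality for $Q$: the paper does this by introducing the concave function $f(s)=s\log^2 s$ on $[0,1/e]$ and applying Jensen's inequality with respect to the probability measure $u^0/M$, yielding $\int u^0 f(|X_1-X_2|^2)\le M\,f(Q/M)$. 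Finally, your concern about the $C/t$ blow-up near $t=0$ is moot under the stated hypothesis $u_i\in L^\infty((0,T);L^\infty(\RR^n))$, since the $L^\infty$ norms are then uniformly bounded on $[0,T]$.
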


 When $n=2$, this theorem  recovers with a different method the result on existence and uniqueness of  bounded solutions  of \cite{lz} but without requiring initial compact support.
\begin{proof}The proof is inspired by the method of Loeper \cite{loeper} (see also \cite{AmSr}).
We will need the following result, adapted from \cite[Lemma 8.1]{berma}. The result there is only stated in dimension $n=2$, we include here a proof in any dimension $n$.
\begin{lemma}\label{lem} Let $p= (-\Delta)^{-1} u$. There exists  a constant $C>0$  depending only on the dimension $n$ such that
$$  \forall x, y \in \mr^n,  \quad |\nab p (x)- \nab p(y)|\le C(\|u_0\|_{L^1}+ \|u_0\|_{\infty}) |x-y| \(|\log |x-y|| \mathbf{1}_{|x-y|\le 1/e} +1 \)$$
\end{lemma}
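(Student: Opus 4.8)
The estimate to prove is a log-Lipschitz bound on $\nabla p$ where $p=(-\Delta)^{-1}u$, so $\nabla p = \nabla N * u$ with $N$ the Newtonian potential; hence $\nabla p(x)-\nabla p(y)$ is a convolution of $u$ against the difference of the (vector-valued) kernel $G(z):=\nabla N(z)\sim c_n z/|z|^n$ evaluated at $x-z$ and $y-z$. The plan is the classical trichotomy on the integration region relative to $\delta:=|x-y|$. First I would write $|\nabla p(x)-\nabla p(y)|\le \int_{\mr^n}|G(x-z)-G(y-z)|\,u(z)\,dz$ and split $\mr^n$ into the near region $A=\{z:\ |x-z|\le 2\delta\ \text{or}\ |y-z|\le 2\delta\}$ and the far region $A^c$. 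On $A$ one does not use the difference structure but estimates each term separately: $\int_{|x-z|\le 2\delta}|G(x-z)|u(z)\,dz \le \|u\|_\infty \int_{|w|\le 2\delta}|G(w)|\,dw \le C\|u\|_\infty\,\delta$, since $|G(w)|\sim |w|^{1-n}$ is integrable near the origin in $\mr^n$ and $\int_{|w|\le 2\delta}|w|^{1-n}\,dw = C\delta$; same for the $y$-ball. This already gives a clean $C\|u\|_\infty\,\delta$ contribution with no logarithm.

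On the far region $A^c$ both $|x-z|$ and $|y-z|$ exceed $2\delta$, so the segment from $x-z$ to $y-z$ stays away from the singularity and the mean value theorem applies: $|G(x-z)-G(y-z)|\le \delta\,\sup|\nabla G|$ along that segment $\le C\,\delta\,|x-z|^{-n}$ (using that $|\nabla G(w)|\sim |w|^{-n}$ and that on the segment $|w|\ge c|x-z|$). Thus the $A^c$-contribution is bounded by $C\delta\int_{A^c}|x-z|^{-n}u(z)\,dz$. Here I split once more into an annular middle region $2\delta\le |x-z|\le 1$ and a tail $|x-z|\ge 1$. On the middle annulus, bounding $u$ by $\|u\|_\infty$ gives $C\delta\,\|u\|_\infty\int_{2\delta\le|w|\le 1}|w|^{-n}\,dw = C\delta\,\|u\|_\infty\,|\log\delta|$ (when $\delta\le 1/e$; the region is empty or harmless otherwise), which is exactly the logarithmic term. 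On the tail $|x-z|\ge 1$ we use $|x-z|^{-n}\le 1$ and bound $\int u \le \|u\|_{L^1}$, producing $C\delta\,\|u\|_{L^1}$. Collecting the three pieces yields $|\nabla p(x)-\nabla p(y)|\le C(\|u\|_{L^1}+\|u\|_\infty)\,\delta(|\log\delta|\mathbf 1_{\delta\le 1/e}+1)$, and since the $L^1$ and $L^\infty$ norms of $u(\cdot,t)$ are controlled by those of $u_0$ (conservation of mass and the $L^\infty$ bound $\|u(t)\|_\infty\le C/t$ — or, more robustly, the non-increase of $L^p$ norms passed through the $s\to1$ limit, which in particular bounds $\|u(t)\|_\infty$ by $\|u_0\|_\infty$ when the latter is finite), we may replace the norms of $u$ by those of $u_0$, giving the stated form.

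The main technical point to handle with care is the borderline role of $\delta$ near $1$: for $\delta\ge 1/e$ one wants to avoid any spurious logarithm, which is why the near-region bound is kept purely linear in $\delta$ and the middle-annulus integral is only invoked (and only produces $|\log\delta|$) when $\delta<1/e$; for $1/e\le\delta$ the far-region integral $\int_{|w|\ge 2\delta}|w|^{-n}u\,dw$ is simply $\le C\|u\|_{L^1}$ up to constants since $|w|^{-n}\le C$ there, giving a bare $C\delta$. A second point requiring a word of justification is the dimension-uniform treatment of $\nabla N$ and $\nabla^2 N$: one should note the exact homogeneities $|\nabla N(z)|\le c_n|z|^{1-n}$ and $|\nabla^2 N(z)|\le c_n|z|^{-n}$ away from the origin, valid for all $n\ge 2$ (in $n=2$, $N=\tfrac1{2\pi}\log|z|$ and the same homogeneities hold), so the argument genuinely does not use $n=2$, which is the only place the cited \cite[Lemma 8.1]{berma} needed extending. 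Everything else is a routine radial integration. I expect no real obstacle beyond this bookkeeping; there is no use of the evolution equation here, only the a priori bounds on $u$.
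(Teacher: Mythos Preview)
Your proposal is correct and follows essentially the same route as the paper: write $\nabla p=C_n K*u$ with $K(z)=z/|z|^n$, then split the integration domain into a near ball of radius comparable to $\delta=|x-y|$ (bounded by $\|u\|_\infty$ via integrability of $|K|$), an intermediate annulus out to a fixed radius (mean value plus $\|u\|_\infty$, yielding the $|\log\delta|$), and the exterior (mean value plus $\|u\|_{L^1}$). The paper uses radii $2d$ and $4$ where you use $2\delta$ and $1$, and treats the two ranges $d\le 1/e$ and $d\ge 1/e$ separately rather than via your conditional use of the middle annulus, but these are cosmetic differences. Your closing remark that the $u$-norms must be traded for $u_0$-norms via conservation of mass and nonincrease of $\|\cdot\|_\infty$ is a point the paper's proof leaves implicit.
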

\begin{proof}
Using the integral expression for $\Delta^{-1}$, we have, for any $n\ge 1$,
\begin{equation}
\label{Kp}
\nab p(x)= C_n \int_{\mr^n} K(x-z) u(z)\, dz\qquad K(x)= \frac{x}{|x|^n}.\end{equation}
Let us denote $d=|x-y|.$ First, we treat the case $d\le 1/e$. We may write
$\nab p(x)-\nab p(y)=C_n( I_1 + I_2 +I_3)$ with
\begin{multline*}I_1:= \int_{\mr^n \backslash B(x, 4)} (K(x-z)-K(y-z) ) u(z)\,dz, \quad I_2:= \int_{B(x, 4)\backslash B(x,2d) }( K(x-z)-K(y-z)) u(z)\, dz\\
 I_3:=\int_{B(x, 2d) }( K(x-z)-K(y-z)) u(z)\, dz.\end{multline*}
For  $z\in \mr^n \backslash B(x, 4) $, and since $d=|x-y|\le 1/e$  we have
$|K(x-z)- K(y-z)|\le C|x-y|$ for some constant $C$. Hence we may write
$|I_1|\le C |x-y|\|u\|_{L^1}.$

For $z\in B(x,4) \backslash B(x,2d)$, by a mean value argument, and since $d=|x-y|$, we have
$|K(x-z)-K(y-z)|\le C \frac{|x-y|}{|x-z|^n}. $ We may thus write $$|I_2|\le C|x-y|\|u\|_{L^\infty} \int_{B(x, 4)\backslash B(x,2d)} \frac{dz}{|z-x|^n} = C |x-y|\|u\|_{L^\infty} |\log d|= C\|u\|_{L^\infty} |x-y||\log |x-y||.$$

For $z\in B(x,2d)$     we have $|K(x-z)-K(y-z)|\le |K(x-z)|+|K(y-z)|= \frac{1}{|x-z|^{n-1}}+ \frac{1}{|y-z|^{n-1}} $ and we conclude
$$|I_3|\le \|u\|_{L^\infty} \int_{B(x, 2d)} \frac{dz}{|x-z|^{n-1}} + \frac{dz}{|y-z|^{n-1}} \le C \|u\|_{L^\infty}  d= C |x-y|\|u\|_{L^\infty}.$$
Combining these estimates on $I_1, I_2 $ and $I_3$ we obtain the result in this case.

If $d\ge 1/e$, we split the integral into two parts:
$$I_1:=\int_{\mr^n \backslash B(x, 2d)} (K(x-z)-K(y-z)) u(z)\, dz, \qquad I_2:= \int_{B(x, 2d)} (K(x-z)-K(y-z))u(z)\, dz.
$$
For the first part, we use again the fact that for $z\notin B(x, 2d)$ we have $|K(x-z)-K(y-z)|\le C \frac{|x-y|}{|x-z|^n} \le  C |x-y|$ since $d\ge 1/e$.
We thus find $|I_1|\le C |x-y|\|u\|_{L^1}.$
For $z \in B(x, 2d) $ we argue just as above for $I_3$ and find $|I_2|\le C |x-y|\|u\|_{L^\infty}$.
This concludes the proof.
\end{proof}

We now give the proof of the theorem.
Let us consider  two bounded solutions $u_1, u_2$  on $[0,T]$ with initial data $u^0$, and let $p_1, p_2$ be the corresponding pressures.

We consider the characteristics equation (for $i=1,2$)
\begin{equation}\label{carac}
\frac{dX_i}{dt}(t,x)= - \nab p_i(t,X_i(t,x))\qquad X_i(0, x)=x.\end{equation}
According to \cite{ags} (see also Corollary 3.3 in \cite{loeper}),  if  $u_i$ solve \eqref{eqs1}, then we have
\begin{equation}\label{pf} u_i = {(X_i)}_\# u^0.\end{equation}

We then define
$$Q(t)= \int u^0(x) |X_1(t,x)-X_2(t,x) |^2\, dx.$$

We have that $Q(0)=  0 $, and from \eqref{pf},  $W_2^2(u_1(t), u_2(t)) \le  Q(t)$.

Let us study the evolution of $Q$. Differentiating $Q $ in time yields
$$Q'(t)= - 2 \int u^0(x) (X_1(t,x)-X_2(t,x) ) \cdot (\nab p_1(t, X_1(t,x))- \nab p_2(t, X_2(t,x) )) \, dx $$ hence using the Cauchy-Schwarz inequality
\begin{multline}\label{bdq}
\hal |Q'(t)|\le \( \int u^0 |X_1 - X_2 |^2\, dx \)^{1/2}  \( \int u^0 |\nab p_1(t, X_1)- \nab p_2(t, X_2)|^2\, dx \)^{1/2} \\= Q^{1/2}     \( \int u^0 |\nab p_1(t, X_1)- \nab p_2(t, X_2)|^2\, dx \)^{1/2}.
\end{multline}
On the other hand,  $\( \int u^0 |\nab p_1(t, X_1)- \nab p_2(t, X_2)|^2\, dx \)^{1/2}\le T_1^{1/2} +T_2^{1/2}$ where
$$T_1=  \int u^0 |\nab p_1(t, X_1)- \nab p_1(t, X_2)|^2\, dx $$ and $$T_2= \int u^0 |\nab p_1(t, X_2)- \nab p_2(t, X_2)|^2\, dx.$$

Note first that  from \eqref{pf}
\begin{equation}\label{t2}
T_2= \int u_2(x,t)   |\nab p_1(t,x) - \nab p_2(t,x)|^2 \, dx.\end{equation}
But Theorem 4.4. in \cite{loeper} asserts that, if   $\omega_1$ and $\omega_2$ are two positive measures of same finite total mass and with $L^\infty$ densities, then $\Psi_i=- (\Delta)^{-1} \omega_i$ satisfy
\begin{equation}\label{loep44}
\|\nab \Psi_1-\nab \Psi_2\|_{L^2(\mr^n)} \le \( \max (\|\omega_1\|_{L^\infty}, \|\omega_2\|_{L^\infty} )\)^\hal  W_2(\omega_1, \omega_2)\end{equation}
Applying this to $u_1(t) $ and $u_2(t)$,  using the uniform $L^\infty$ bound on these solutions, and inserting into \eqref{t2}, we find
$$|T_2|\le \max (\|u_1(t)\|_\infty^2, \|u_2(t)\|_{\infty}^2)  W_2^2(u_1(t), u_2(t)) \le C W_2^2(u_1(t), u_2(t)).$$

There remains to bound $T_1$.
Using Lemma \ref{lem},  and denoting $M= \|u^0\|_{L^1}$, we may write \begin{multline*}
|T_1|\le  C (M^2 +  \|u_1(t)\|_{\infty}^2 ) \Big(  \int u^0(x) |X_1-X_2|^2 \log^2 |X_1-X_2|\mathbf{1}_{|X_1-X_2 |\le 1/e}   \, dx  \\+  \int u^0(x) |X_1-X_2|^2 \, dx\Big) .\end{multline*}
Let us define $f(x)= x\log^2 (x)$ for $0\le x\le 1/e$ and $f(x) = 1/e$ for $ x \ge 1/e$. It is easy to check that $f$ is concave, $f(x ) \le x \log^2 x$,   and moreover from the above (and the boundedness of $u_1$)
$$|T_1|\le C \Big(  \int u^0(x) f(|X_1-X_2|^2)  \, dx  +  \int u^0(x) |X_1-X_2|^2 \, dx\Big)$$
Using the concavity of $f$, by Jensen's inequality, we have
 $$ \int u^0(x) f(|X_1-X_2|^2)  \, dx  \le  M f \( \int \frac{u^0(x)}{M} |X_1-X_2|^2 \, dx\)= M
 f(\frac{Q(t)}{M} ) \le C_M Q(t) \log^2 Q(t).$$

 We conclude that  $|T_1|\le CQ(t) (\log^2 Q(t)+1).$
 Combining with the bound for $T_2$, recalling
  that $W_2^2 (u_1, u_2) \le Q$, and
 inserting all these bounds into \eqref{bdq}, we find
$$ Q'(t) \le |Q'(t)|\le C Q(t) \sqrt{1+\log^2 \frac{1}{Q(t)}},$$ where $C$ depends on $M$ and the uniform bounds on $u_1$ and $u_2$.

We may now conclude by a standard integration argument, and since $\int \frac{dv}{v\sqrt{1+\log^2 v}}=  \log (\log v + \sqrt{1+\log^2 v})$ (using a change of variables $u= \log v$), that
 $Q(0)=0$ implies $Q(t)=0$ for $ t\in [0,T]$,  and  since $W_2^2(u_1(t), u_2(t)) \le Q(t) $ and $W_2$ is a distance,   the conclusion follows.
\end{proof}

\section{The universal  bound for $s=1$}\label{sec.ub}

 We now show that the $L^\infty$ norm of solutions  is decreasing in time. Indeed, the
equation can be written as
$$
u_t=-u^2-\nabla u\cdot\nabla p.
$$
which is the sum of absorption and transport with speed ${\bf
v}=-\nabla p$. This expression suggests a universal decay in time of the form
\begin{equation}
\|u(\cdot,t)\|_\infty\le \frac1{t}
\end{equation}
which we will establish below. This agrees   with the limit $s\to 1$
of the smoothing effect of Proposition \ref{pro2.1} for $s<1$, cf. Theorem \ref{thm.exlimit}. What
is surprising is that the bound is universal in the sense that it does not depend on any norm
of the data.  Note that in dimension 2, this bound appears in \cite{lz}.
We have a slightly more precise result in terms of
the $L^\infty$ norm of the data.  The subject was taken up in the recent paper \cite{BLL}, which is  very close to the ideas we explain next in some detail for the reader's convenience.

\begin{theorem}\label{th51} For every weak solution $u$ with bounded initial data $u_0$ we have the estimate
\begin{equation}
u(x,t)\le \frac1{t+\tau}
\end{equation}
where $\tau=1/ \sup_x u_0(x)$. Therefore these solutions coincide (via the uniqueness result for bounded solutions, Theorem {\rm \ref{th31})} with those of Theorem {\rm \ref{thm.exlimit}.}
\end{theorem}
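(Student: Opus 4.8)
The plan is to prove the universal bound $u(x,t)\le 1/(t+\tau)$ with $\tau=1/\sup u_0$ by working with the characteristics of the transport field $\mathbf v=-\nabla p$ and tracking the decay of $u$ along them. Writing the equation in nondivergence form $u_t=-u^2-\nabla u\cdot\nabla p$, one observes that along a characteristic curve $X(t)$ solving $\dot X=-\nabla p(t,X(t))$ the quantity $v(t):=u(t,X(t))$ satisfies the ODE inequality $\dot v\le -v^2$ (with equality in the smooth case). Integrating the Riccati inequality $\dot v\le -v^2$ from $0$ gives $v(t)\le 1/(t+1/v(0))\le 1/(t+\tau)$, which is exactly the claimed bound evaluated at the endpoint $x=X(t)$; since the flow map at time $t$ is (at least formally) onto, this yields the estimate for all $x$.

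To make this rigorous one cannot differentiate $u$ along characteristics directly, because $u$ is only a bounded weak solution and $\nabla p$ has merely the log-Lipschitz regularity of Lemma \ref{lem}. I would instead argue by approximation: take the solutions $u_k$ of the FPME with $s=s_k\to 1$ produced in Theorem \ref{thm.exlimit}, for which the smoothing estimate of Proposition \ref{pro2.1} reads $\|u_k(t)\|_\infty\le C(n,s_k)\,t^{-\alpha_k}\|u_0\|_{L^1}^{\sigma_k}$; this already gives $\|u(t)\|_\infty\le C/t$ in the limit but not the sharp constant $1$ nor the shift by $\tau$. So a cleaner route is to run the Riccati argument directly on the smooth approximations used to construct the $u_k$ (or on the $u_k$ themselves after mollification), where the characteristic ODE is legitimate, derive $u_k(x,t)\le 1/(t+\tau_k)$ with $\tau_k=1/\sup u_0$ exactly as above, and then pass to the limit $s_k\to 1$ using the local compactness already established in the proof of Theorem \ref{thm.exlimit}. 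Alternatively, one can test the weak formulation against a truncation of $(u-1/(t+\tau))_+$ transported along the approximate flow and show this quantity stays zero, i.e. a Gronwall/barrier argument at the level of weak solutions; the log-Lipschitz bound on $\nabla p$ from Lemma \ref{lem} is precisely what is needed to make the characteristic flow well defined and the comparison go through (this is the same technology as in the uniqueness proof, Theorem \ref{th31}).

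The last sentence of the theorem is then immediate: once we know every weak solution with bounded data obeys $u(x,t)\le 1/(t+\tau)\le 1/t$, such a solution lies in $L^\infty((0,T):L^\infty(\mathbb R^n))$, and the solutions constructed in Theorem \ref{thm.exlimit} also lie in that class; since they share the same initial data $u_0$, the uniqueness statement of Theorem \ref{th31} forces them to coincide.

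I expect the main obstacle to be the low regularity: neither $u$ nor the flow of $-\nabla p$ is smooth enough for the pointwise Riccati computation to be valid as stated, so the real work is choosing the right regularization (passing to $s<1$, or mollifying in space, or using the renormalization machinery for transport equations with log-Lipschitz velocity) and verifying that the sharp constant $1$ and the exact shift $\tau=1/\sup u_0$ survive the limit — in particular that no universal constant creeps in, which is the whole point of the statement. The finite-speed-of-propagation and continuity properties recorded in Section \ref{sec.exist}, together with the compactness estimates from the proof of Theorem \ref{thm.exlimit}, should be enough to control the passage to the limit.
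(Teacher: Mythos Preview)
Your heuristic --- rewrite the equation as $u_t+\mathbf v\cdot\nabla u=-u^2$ with $\mathbf v=-\nabla p$, follow characteristics, and integrate the Riccati ODE $\dot v=-v^2$ --- is exactly the motivation the paper records at the beginning of Section~\ref{sec.ub}. The difficulty is in how you propose to make it rigorous, and there the proposal has a real gap.

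Your main regularization, the FPME with $s=s_k<1$, does \emph{not} preserve the structure on which the Riccati argument rests. For $s<1$ one has $\Delta p_s=-(-\Delta)^{1-s}u$, not $-u$, so in nondivergence form the equation reads $u_t=-u\,(-\Delta)^{1-s}u+\nabla u\cdot\nabla p_s$ and the term seen along characteristics is no longer $-u^2$. Hence no Riccati inequality is available for the $s<1$ approximants, and there is no way to extract the sharp bound $1/(t+\tau)$ from them; you yourself note that Proposition~\ref{pro2.1} only yields $C/t$. Mollifying the $u_k$ does not help, for the same reason.

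The paper's route is to use a \emph{different} regularization that keeps $s=1$: the viscous approximation
\[
\partial_t u_\ep=\ep\Delta u_\ep+\nabla\cdot(u_\ep\nabla p_\ep),\qquad p_\ep=(-\Delta)^{-1}u_\ep,
\]
whose solutions are smooth (existence via a Schauder fixed point, Proposition~\ref{pross}) while the crucial identity $-\Delta p_\ep=u_\ep$ survives intact. Even here characteristics are not the right tool, since the extra term $\ep\Delta u_\ep$ has no sign along them. Instead the paper runs an integral (Stampacchia-type) argument: for $\phi$ convex, nonnegative, supported on $[0,\infty)$, one computes $\frac{d}{dt}\int\phi\bigl(u_\ep-M(t)\bigr)\,dx$ with $M(t)=1/(t+\tau)$. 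After integrations by parts using both $-\Delta p_\ep=u_\ep$ and the equation, this is bounded by $-(M^2+M')\int\phi'\bigl(u_\ep-M\bigr)$, which vanishes precisely for the Riccati profile $M'=-M^2$. Since $\int\phi(u_0-M(0))=0$, one obtains $u_\ep\le M(t)$ and then lets $\ep\to0$. Your ``alternative'' of testing against a truncation of $(u-1/(t+\tau))_+$ is in the right spirit, but the missing ingredient you did not identify is a regularization that simultaneously makes the solution smooth \emph{and} preserves $-\Delta p=u$; that is what the vanishing-viscosity scheme supplies and the $s\to1$ scheme does not.
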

\begin{remark}
 A similar result holds for the lower bound of the negative values
when dealing with signed solutions. \end{remark}

  We start by studying a viscous approximation of the equation (which, by the way, is like the repulsive version of the Keller-Segel equation for chemotaxis \cite{KS71, JL92}).
  \begin{proposition}\label{pross}
Let $u^0\in L^\infty(\mr^n)\cap L^1(\mr^n)$ with $u^0\ge 0$. There exists a unique solution to
  \begin{equation}\label{viscousapp}
 \left\{\begin{array}{ll}
  \p_t u_\ep = \ep \Delta u_\ep+ \nab \cdot (u_\ep \nab p_\ep) & \qquad p_\ep= (- \Delta)^{-1} u_\ep\\
  u_\ep(\cdot, 0)=u^0 &\end{array}\right.\end{equation}
  Moreover $u_\ep$ is smooth, satisfies
  \begin{equation}\label{boundvisc}
  0 \le u_\ep(x,t) \le \|u^0\|_{L^\infty}\end{equation}
  and converges locally in $L^q(\mr^n \times [0, +\infty)$ for any $1\le q<\infty$, to $u$, the unique solution to \eqref{eqs1}--\eqref{init}.
  \end{proposition}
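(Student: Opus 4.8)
The plan is to handle the viscous problem \eqref{viscousapp} by classical parabolic theory and then pass to the limit $\ep\to0$ using compactness together with the uniqueness statement of Theorem~\ref{th31}. Since $\Delta p_\ep=-u_\ep$, equation \eqref{viscousapp} is the semilinear heat equation $\p_t u_\ep=\ep\Delta u_\ep+\nab u_\ep\cdot\nab p_\ep-u_\ep^2$ with $p_\ep=(-\Delta)^{-1}u_\ep$. I would get local existence and uniqueness by a Banach fixed point argument on $C([0,T];L^1(\mr^n)\cap L^\infty(\mr^n))$ applied to the Duhamel formula, using the heat-semigroup bound $\|\nab e^{\ep t\Delta}\|_{L^q\to L^q}\le C(\ep t)^{-1/2}$ and the elementary estimate $\|\nab p_\ep\|_{L^\infty}\le C(\|u_\ep\|_{L^1}+\|u_\ep\|_{L^\infty})$ (split the Riesz kernel $x/|x|^n$ at $|x|=1$; this uses $n\ge2$). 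The constants $0$ and $\|u^0\|_{L^\infty}$ are respectively a sub- and a super-solution, so the parabolic maximum principle (applicable since $u_\ep$ decays at infinity, by the $L^1$ bound and parabolic smoothing) gives \eqref{boundvisc}; integrating the equation yields conservation of mass, hence a uniform-in-time $L^1$ bound, and the local solution extends to a global one, uniqueness being propagated from each small time step. For $t>0$ smoothness follows from the usual bootstrap: $\nab p_\ep\in L^\infty$ gives $u_\ep\in C^\alpha_{loc}$ by De Giorgi--Nash--Moser, then $\nab p_\ep=\nab(-\Delta)^{-1}u_\ep\in C^{1,\alpha}_{loc}$, then $u_\ep\in C^{2,\alpha}_{loc}$ by Schauder, and one iterates.

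\textbf{Passage to the limit $\ep\to0$.} The bounds $\|u_\ep\|_{L^\infty(\mr^n\times(0,\infty))}\le\|u^0\|_{L^\infty}$ and $\|u_\ep(t)\|_{L^1}=\|u^0\|_{L^1}$ give uniform $L^q$ bounds on compact subsets; moreover $\nab p_\ep$ is uniformly bounded in $L^\infty$ and $D^2p_\ep$ uniformly in $L^q_{loc}$ by Calder\'on--Zygmund. Writing $\p_t p_\ep=(-\Delta)^{-1}\p_t u_\ep=-\ep u_\ep+(-\Delta)^{-1}\nab\cdot(u_\ep\nab p_\ep)$, and likewise for $\p_t\nab p_\ep$, produces uniform bounds that let me apply Aubin--Simon's theorem (\cite{Aubin,Simon}) exactly as in the proof of Theorem~\ref{thm.exlimit}: $p_\ep$ and $\nab p_\ep$ are relatively compact in $C([\tau,T];L^q_{loc})$ for every $\tau>0$. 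Along a subsequence $u_\ep\rightharpoonup u$ weakly in $L^q_{loc}$ while $p_\ep\to p=(-\Delta)^{-1}u$ and $\nab p_\ep\to\nab p$ strongly; the term $\ep\iint u_\ep\Delta\phi\to0$ and $u_\ep\nab p_\ep\rightharpoonup u\nab p$ by weak$\times$strong convergence, so $u$ is a weak solution of \eqref{eqs1}--\eqref{init}, and the uniform $L^\infty$ bound survives in the limit; by Theorem~\ref{th31} $u$ is the unique bounded solution, so the limit is independent of the subsequence and the whole family converges. In dimension $n=2$ one works with $\nab p_\ep$ only (the potential being defined only after the renormalization of Section~\ref{n2}), which is all that enters the weak formulation.

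\textbf{Strong convergence.} To upgrade to strong convergence in $L^q_{loc}$, I would compare energy identities. Multiplying \eqref{viscousapp} by $u_\ep$ and integrating (legitimate since $u_\ep$ is smooth and decaying) gives, using $\Delta p_\ep=-u_\ep$,
\[
\|u_\ep(T)\|_{L^2}^2+2\ep\int_0^T\!\!\int|\nab u_\ep|^2\,dx\,dt+\int_0^T\!\!\int u_\ep^3\,dx\,dt=\|u^0\|_{L^2}^2 ,
\]
while the limit $u$, being a bounded dissipative solution, satisfies $\|u(T)\|_{L^2}^2+\int_0^T\!\!\int u^3\,dx\,dt=\|u^0\|_{L^2}^2$ (the $F$-energy identity with $F(u)=h(u)=u^2/2$, or the $s_k\to1$ limit of the corresponding identity for the FPME solutions of Theorem~\ref{thm.exlimit}). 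Since $v\mapsto\iint v^3$ is weakly lower semicontinuous on nonnegative $v$, the two identities force $\limsup_\ep\|u_\ep(T)\|_{L^2}^2\le\|u(T)\|_{L^2}^2$, while lower semicontinuity of the norm gives the reverse inequality; hence $\|u_\ep(T)\|_{L^2}\to\|u(T)\|_{L^2}$ for a.e.\ $T$. Weak convergence of $u_\ep(T)$ in $L^2(\mr^n)$ (available because $u_\ep$ is uniformly bounded in $L^\infty_tL^2_x$ and uniformly equicontinuous in time into a negative-order Sobolev space) together with convergence of norms gives $u_\ep(T)\to u(T)$ strongly in $L^2(\mr^n)$; interpolating with the uniform $L^1$ and $L^\infty$ bounds and integrating in $T$ by dominated convergence yields strong convergence in $L^q$ locally in time, for every $1\le q<\infty$. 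Alternatively this last step can be read off from the stability of renormalized solutions of the continuity equation, as the drift $\nab p_\ep$ converges strongly in $L^1_{loc}$ to $\nab p\in W^{1,q}_{loc}$ with divergence bounded in $L^\infty$.

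\textbf{Main obstacle.} The genuinely delicate point is the last one: establishing the two energy identities as \emph{equalities} --- including attainment of the $L^2$ initial datum --- for the limiting solution, which is only known to be bounded and may well be discontinuous; this is where one really uses that the constructed solutions are dissipative in the sense of Section~\ref{sec.exist}. A secondary technical nuisance is the uniform-in-$\ep$ Calder\'on--Zygmund and heat-semigroup bookkeeping needed to run the Aubin--Simon compactness with the extra term $\ep\Delta u_\ep$ present, and the two-dimensional case, in which every compactness statement and the identification of the limit must be phrased through $\nab p_\ep$ alone.
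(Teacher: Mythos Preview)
Your approach to the existence part is genuinely different from the paper's. The paper sets up a \emph{Schauder} fixed point: freeze $v$, let $p_v=(-\Delta)^{-1}v$, solve the linear drift--diffusion equation $\p_t u=\ep\Delta u+\nab\cdot(u\nab p_v)$, and show that the resulting map $v\mapsto u$ sends a closed convex set $K\subset L^\infty(\mr^n\times(0,T))$ into itself with $C^{0,\alpha}$ image, hence compactly. The bound \eqref{boundvisc} is obtained there by an entropy computation: for $\phi$ convex, nonnegative and supported on $[\|u^0\|_\infty,\infty)$ one checks $\frac{d}{dt}\int\phi(u)\le0$. Your Banach/contraction argument is more direct and has the advantage of giving uniqueness of the viscous solution for free (Schauder does not, and the paper does not spell out uniqueness separately). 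Your maximum-principle argument for \eqref{boundvisc} is also cleaner, though one should phrase it carefully since the equation is nonlocal through $p_\ep$: the pointwise version (at a touching point $\nab u_\ep=0$, $\Delta u_\ep\le0$, hence $\p_t u_\ep\le -u_\ep^2\le0$) is what actually works, rather than literally calling the constant $\|u^0\|_\infty$ a supersolution.

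For the limit $\ep\to0$, the paper simply refers back to the compactness argument of Theorem~\ref{thm.exlimit}, which yields only \emph{weak} convergence of $u_\ep$ in $L^q_{loc}$ together with strong convergence of $\nab p_\ep$; the paper does not supply the strong-$L^q$ convergence that the wording of the proposition might suggest. Your additional attempt via comparison of $L^2$ energy identities is natural, but the obstacle you flag is real: the identity $\|u(T)\|_{L^2}^2+\int_0^T\!\int u^3=\|u^0\|_{L^2}^2$ for the limit solution is available in the paper's framework only as an inequality (cf.\ the remark after \eqref{entro} and the inequality \eqref{2.12}), and without equality your $\limsup/\liminf$ sandwich collapses. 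So this last step remains a genuine gap --- but one that the paper itself does not close either.
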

  We note that this ``vanishing viscosity approach" is an alternative way of obtaining existence of solutions to \eqref{eqs1}--\eqref{init}.

  \begin{proof}
  This natural ``viscosity approximation''  was previously considered in Sch\"atzle-Styles \cite{sch-styles}, for a very similar equation, but in a bounded domain of $\mr^2$. Because the proof is tailored for a bounded domain,  it is not immediate to adapt it to our case.   In order to prove the  existence of solutions to \eqref{viscousapp}, we set up a fixed point argument.

Take  $T>0$ and let
 $$K=\left\{ v(x,t) \in L^\infty (\mr^n\times(0,T)), \ v \ge 0\,,  \text{and} \  \forall t \in [0,T], \int_{\mr^n} v(x,t) \,dx \le \int_{\mr^n} u^0(x)\, dx\right\}.$$
  $K$ is clearly a closed convex subset of the Banach space $L^\infty(\mr^n \times [0,T])$.

  \begin{lemma} Let $u^0 \in L^\infty(\mr^n)\cap L^1(\mr^n)$. For any $v \in K$, letting $p_v = (-\Delta)^{-1} v$. Then, we have
  $\nab p_v \in L^\infty(\mr^n \times[0,T])$
  and there exists a unique solution $u:={\cal T}(v)$  over $[0,T]$  of the problem
  \begin{equation}
  \label{tvu}
\left\{  \begin{array}{ll}
  \p_t u = \ep \Delta u + \nab \cdot (u \nab p_v)\,,\\
  u(\cdot, 0)=u^0\,. \end{array}\right.
  \end{equation}
  That solution satisfies ${\cal T}(v) \in K$ and $\|{\cal T}(v)\|_{C^{0, \alpha}} \le C.$ ${\cal T}$ is continuous and compact from $K$ to itself.
  \end{lemma}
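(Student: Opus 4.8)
The plan is to verify the four assertions about $\mathcal T$ in turn, relying on standard linear parabolic theory for the drift-diffusion operator $\partial_t - \ep\Delta - \nab\cdot(\;\cdot\;\nab p_v)$ once we know the drift $\nab p_v$ is bounded. First I would establish the elliptic regularity claim: for $v\in K$ we have $\|v(\cdot,t)\|_{L^1}\le\|u^0\|_{L^1}$ and $\|v(\cdot,t)\|_{L^\infty}\le\|v\|_{L^\infty(\mr^n\times(0,T))}<\infty$, so $v(\cdot,t)\in L^1\cap L^\infty$ uniformly in $t$; then, writing $\nab p_v(x,t)= C_n\int K(x-z)v(z,t)\,dz$ with $K(x)=x/|x|^n$ as in \eqref{Kp}, splitting the integral over $B(x,1)$ and its complement, and using $K\in L^1_{loc}$ together with $|K(x)|\le 1$ for $|x|\ge 1$, one gets $\|\nab p_v(\cdot,t)\|_{L^\infty}\le C(\|v\|_{L^\infty}+\|u^0\|_{L^1})$, uniformly in $t\in[0,T]$. (This is the same type of elementary kernel estimate as in the proof of Lemma \ref{lem}.)

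Next I would solve \eqref{tvu}. Since $b:=\nab p_v\in L^\infty(\mr^n\times(0,T))$, equation \eqref{tvu} is a linear uniformly parabolic equation $\partial_t u=\ep\Delta u+\nab\cdot(ub)$ with bounded (merely measurable) coefficients; existence and uniqueness of a weak solution $u\in L^2((0,T),H^1)\cap C([0,T],L^2)$ with $u(\cdot,0)=u^0$ follows from the Lax--Milgram/Galerkin method, and the De Giorgi--Nash--Moser theory gives interior $C^{0,\alpha}$ bounds, hence the claimed $\|\mathcal T(v)\|_{C^{0,\alpha}}\le C$ on compact subsets (with $C$ depending on $\ep$, $T$, $\|u^0\|_{L^1\cap L^\infty}$ but not on $v$). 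To show $\mathcal T(v)\in K$: nonnegativity $u\ge0$ comes from the maximum principle (the zero function is a subsolution since $u^0\ge0$), and the mass bound comes from integrating the equation in $x$ — the diffusion term integrates to zero and the divergence term integrates to zero (justified by the $L^\infty$ bound on $b$ and the decay of $u$, or by testing against a cutoff and passing to the limit) — giving $\frac{d}{dt}\int u\,dx=0$, so $\int u(\cdot,t)\,dx=\int u^0\,dx$, which is in particular $\le\int u^0\,dx$. One also records the bound $\|u(\cdot,t)\|_{L^\infty}\le\|u^0\|_{L^\infty}$ from the maximum principle (writing the equation as $\partial_t u=\ep\Delta u+b\cdot\nab u+(\nab\cdot b)u$ and noting $\nab\cdot b=-(\Delta p_v)\cdot(-1)=v\ge0$, wait—$-\Delta p_v=v$ so $\nab\cdot b=\nab\cdot\nab p_v=\Delta p_v=-v\le0$, hence the zeroth-order coefficient has the good sign), so actually $\mathcal T$ maps $K$ into itself even with the sup norm controlled by $\|u^0\|_{L^\infty}$; this is what makes $K$ invariant.

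Finally, compactness and continuity of $\mathcal T:K\to K$. Compactness: the image $\mathcal T(K)$ lies in a bounded set of $C^{0,\alpha}_{loc}$ and has uniformly controlled tails (from the $L^1$ mass bound plus an $L^1$-moment or tightness estimate, e.g. testing the equation against $\langle x\rangle$ and using $|b|\le C$ to bound the growth of $\int |x| u\,dx$), so by Arzelà--Ascoli together with this tightness, $\mathcal T(K)$ is precompact in $L^q(\mr^n\times(0,T))$ for every $q<\infty$; since $L^\infty\cap$(tight) sets embed compactly into $L^q$ on this space, $\mathcal T$ is compact. Continuity: if $v_k\to v$ in $L^\infty(\mr^n\times(0,T))$ with $v_k\in K$, then $\nab p_{v_k}\to\nab p_v$ (again by the kernel estimate applied to $v_k-v$, using the uniform $L^1$ and $L^\infty$ bounds to control the far and near contributions), so the drifts converge; stability of linear parabolic equations under convergence of the drift — via the uniform energy estimates plus the compactness just proved to pass to the limit in the weak formulation, and uniqueness of the limit problem — gives $\mathcal T(v_k)\to\mathcal T(v)$.

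The main obstacle I expect is not any single estimate but the care needed with the \emph{unbounded domain}: the a priori bounds (mass conservation, $L^\infty$ bound, the integration-by-parts steps) must be justified with cutoffs and a tightness/moment estimate rather than quoted from the bounded-domain literature of \cite{sch-styles}, and the compactness in $L^q(\mr^n\times(0,T))$ requires combining the interior Hölder bound with uniform decay at spatial infinity. Everything else is classical linear parabolic theory applied to a bounded-drift equation.
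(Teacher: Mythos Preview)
Your plan is essentially correct and follows the same broad outline as the paper (bound the drift via the kernel split, solve the linear parabolic problem, verify $K$-invariance, then regularity and compactness), but several of your individual steps use different tools than the paper does.

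For the $L^\infty$ bound on $u=\mathcal T(v)$, the paper does not invoke the maximum principle with the sign of $\nabla\cdot b=-v\le 0$ as you do; instead it tests the equation against $\phi'(u)$ for a convex $\phi$, integrates by parts to obtain
\[
\frac{d}{dt}\int\phi(u)\,dx=-\ep\int\phi''(u)|\nabla u|^2-\int v\,H(u)\le 0,
\]
with $H(s)=\int_0^s t\phi''(t)\,dt\ge 0$, and then chooses $\phi$ supported on $[\|u^0\|_\infty,\infty)$. Your maximum-principle route is shorter and equally valid (it is a Stampacchia-type argument once one notes $-\Delta p_v=v$ in the weak sense), while the paper's entropy computation has the advantage of being reused verbatim in the proof of Theorem~\ref{th51}. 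For the H\"older bound, the paper first upgrades $\nabla p_v$ to $C^{0,\alpha}$ in space and then quotes the Schauder estimates of \cite{LSU}, whereas you rely on De~Giorgi--Nash--Moser with merely bounded drift; both give what is needed. For existence of the linear problem the paper simply cites \cite{bdp}, where a Duhamel/convolution argument is carried out, rather than Lax--Milgram. Finally, the paper asserts compactness directly from ``$C^{0,\alpha}$ embeds compactly into $L^\infty$'' without discussing the unbounded domain; your explicit concern about tightness at spatial infinity is well placed, and your proposed moment bound (test against $\langle x\rangle$ using $|\nabla p_v|\le C$) is the natural way to make that step honest.
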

  \begin{proof}
First we note that if $v \in K$, then for all $t \in [0,T]$, $v(\cdot,t) \in L^1\cap L^\infty$, which implies that $\nab p_v \in L^\infty(\mr^n \times [0,T])$ by Nash-Sobolev embedding, or more precisely, by observing
\begin{multline*}|\nab p_v|(x,t) \le \int_{\mr^n} \frac{v(y,t)}{|x-y|^{n-1}}\, dy \le \|v\|_{L^\infty(\mr^n \times [0,T])}
\int_{B(x,1)} \frac{dy}{|x-y|^{n-1}} + \int_{\mr^n} v(y) \, dy\\
 \le C\(\|v\|_{L^\infty(\mr^n\times [0,T])} + \|v(\cdot, t)\|_{L^1(\mr^n)}\),\end{multline*}
where $C$ depends only on the dimension. In fact, direct inspection allows to prove in addition that $\nab p_v(\cdot, t) \in C^{0,\a}(\mr^n)$ for some $\a>0$.

We are thus led to solving a linear equation of the form
 \begin{equation}
  \label{tlinear}
\left\{  \begin{array}{ll}
  \p_t u = \ep \Delta u + \nab \cdot (u f)\\
  u(\cdot, 0)=u^0 \end{array}\right.\end{equation}
  with $f\in L^\infty(\mr^n \times [0,T], \mr^n)$.
  This can be solved by a standard convolution argument, see for example Lemma 12 in \cite{bdp} which applies exactly (the fact that the dimension is $n$ and not $2$ does not affect at all the argument), and it provides us with a unique solution ${\cal T}(v) \ge 0$ defined for all positive times, hence in $[0,T].$
  The fact that for all $t\in [0,T]$,  $\int_{\mr^n} {\cal T}(v)\, dx  \le \int_{\mr^n} u^0$ follows by integrating in time the equation.
On order to show  that ${\cal T}(v) \in K$, there remains to show that ${\cal T}(v) \in L^\infty(\mr^n\times [0,T]) $.

  Let us consider  $u = {\cal T}(v)$ solving \eqref{tlinear} and  let $\phi: \mr \to \mr$ be a smooth function. Using   \eqref{tlinear}, integration by parts and $\Delta p_v=-v$, we have
  \begin{multline}
  \frac{d}{dt} \int_{\mr^n} \phi(u) = \int \phi'(u) \( \ep \Delta u + \nab \cdot (u \nab p_v)\)\\
 = - \ep \int \phi''(u) |\nab u|^2 - \int  u \phi''(u) \nabla p_v \cdot \nab u \\
 = -\ep \int \phi''(u) |\nab u|^2 - \int \nab p_v \cdot\nab [H(u)]\\
  = -  \ep \int \phi''(u) |\nab u|^2 - \int v  H(u)\,,\\
  \end{multline}
where we define $H(s)= \int_0^s t\phi''(t)\, dt.$
If in addition  $\phi $ is  convex it follows that  $H\ge 0$ and, since $v \ge 0$, we find
$$
\frac{d}{dt} \int_{\mr^n} \phi(u) \le 0.$$
Taking $\phi$ nonnegative convex and  supported over $[\|u^0\|_{L^\infty}, + \infty)$ we conclude that $0 \le u(x,t) \le \|u^0\|_{L^\infty}$ for every $(x, t) \in \mr^n \times[0,T]$. This proves that ${\cal T}(v)\in L^\infty$ and it belongs to $K$. ${\cal T} $ thus maps $K$ into itself. It is also easy to see that it is continuous.

Now, standard regularity theory  (cf. \cite{LSU})  gives us Schauder estimates
$$\|{\cal T}(v)\|_{C^{0, \a}(\mr^n \times [0,T])}\le C \(\|v\|_{L^\infty(\mr^n\times [0,T]) }+ \|v\|_{L^1(\mr^n\times [0,T])}\)\le CT\(\|u^0\|_{L^\infty} +\|u^0\|_{L^1}\) .$$ Since $C^{0,\a}$ embeds compactly into $L^\infty$, ${\cal T} (K)$ is thus precompact.
\end{proof}

We may now apply Schauder's fixed point theorem \cite{lieb}  which gives us the existence of a fixed point for ${\cal T}$ hence a solution to \eqref{viscousapp}, satisfying $u_\ep \ge 0$ and $\|u_\ep\|_{L^\infty(\mr^n \times [0,T])} \le \|u^0\|_{L^\infty}$.

Finally once we have established \eqref{boundvisc}, convergence of $u_\ep $ to a solution of
\eqref{eqs1} can be proven following the same steps as in the proof of Theorem \ref{thm.exlimit}
(except replacing $s_k$ by $1$). The solution is unique by Theorem \ref{th31}, which implies convergence
of the whole sequence $u_\ep$.
\end{proof}

\begin{proof}[Proof of Theorem \ref{th51}]
  We first prove  the desired estimate for $u_\ep$ solution to \eqref{viscousapp}, which is smooth by standard parabolic regularity theory.
  Let us consider $\phi$ smooth such that $\phi(u)=0$ for  $u \le 0$, $\phi''\ge 0 , \phi' \ge 0$,  and let us
   compute $\dfrac{d}{dt} \dint \phi(u_\ep-M(t))dx$, with $M$ some smooth nonnegative function of time. This yields
\begin{equation}\label{ddtp}
\dfrac{d}{dt}\dint \phi(u_\ep-M(t))dx=\dint \phi'(u_\ep-M(t))\p_t u_\ep\,dx -M'(t)\dint \phi'(u_\ep-M(t))\,dx.
\end{equation}
Using  the equation, we find, after several integrations by parts
\begin{multline*}
\dint \phi'(u_\ep-M(t))\p_t u_\ep
 = \ep \int \phi'(u_\ep- M(t)) \Delta u_\ep - \int \phi''(u_\ep- M(t)) \nab u_\ep \cdot u_\ep \nab p_\ep\\
 = -  \ep \int \phi''(u_\ep- M(t)) |\nab u_\ep|^2 -
 \int \phi''(u_\ep - M(t)) (u_\ep - M(t))  \nab u_\ep \cdot \nab p_\ep
 \\-M(t) \int \phi''(u_\ep -M(t)) \nab u_\ep \cdot \nab p_\ep\\
 \le
- \int \nab [ H(u_\ep -M(t))] \cdot \nab p_\ep   - M(t) \int \phi''(u_\ep -M(t)) \nab u_\ep \cdot \nab p_\ep.   \end{multline*}
 Here, we have defined again
 $H(s)= \int_0^s t\phi''(t)\, dt.$  Using  $-\Delta p_\ep= u_\ep$ and another integration by parts, we find that the first term on the right-hand side is $ - \int H(u_\ep -M(t)) u_\ep \le 0$. There remains to estimate  in a similar way
 \begin{multline*}-
 M(t) \int \phi''(u_\ep -M(t)) \nab u_\ep \cdot \nab p_\ep
= - M(t) \int \nab[\phi'(u_\ep - M(t))]  \cdot \nab p_\ep \\
=- M(t) \int \phi'(u_\ep -M(t)) u_\ep \le -  M(t)^2 \int \phi'(u_\ep - M(t)) \,,
\end{multline*}
because $\phi'$ is supported on the positive part of the real line and $M(t) \ge 0$.
We conclude that $$\dint \phi'(u_\ep-M(t))\p_t u_\ep\,dx  \le  -  M(t)^2 \int \phi'(u_\ep - M(t))\, dx$$ and combining with  \eqref{ddtp}
\begin{equation}
\dfrac{d}{dt}\int \phi(u_\ep-M(t))dx \le - (M^2(t)+M'(t)) \int \phi'(u_\ep - M(t))\,dx .
\end{equation}
Choosing $M(t)= \frac{1}{t+\tau}$ with $\tau = (\sup u_0(x))^{-1}$, we have $M'(t)= - M^2(t)$, and it follows that   \
$\dfrac{d}{dt}\dint \phi(u_\ep-M(t))\, dx \le 0$. On the other hand, since $u^0\le M(0)$ we have $\int \phi(u^0 - M(0)) \, dx\le 0$, and thus we have obtained that $\int \phi(u_\ep - M(t))\, dx \le 0$ for all $ t\ge 0$.

Taking now $\phi=\phi_n$ to be a sequence that approximates the function $\phi_0(x)= sgn_+(x)$.  we obtain $u_\ep(t) \le M(t)$ a.e. for $t\ge 0$.
  Passing to the limit $\ep \to 0$, and using Proposition \ref{pross},  we arrive at the result.
  \end{proof}

\begin{remark} The estimate $|u(x,t)|\le \frac{1}{t+\|u_0\|_{\infty}^{-1}}$ also applies  to solutions of
every sign which generalizes the case of ``dissipative weak solutions" studied in \cite{mz}.\end{remark}

We conclude this section with a result of continuity of the solutions in Wasserstein distance, which we do not use however.

\begin{lemma}\label{lemcont}
The solutions to \eqref{eqs1} with $u^0 \in \mathcal{M}^+(\mr^n)$  satisfying \eqref{linfbound} are ``absolutely continuous" (hence continuous) for the $W_q$-Wasserstein distance for any $q\in [1, \infty)$: more precisely $W_q(u(t_2), u(t_1)) \le \int_{t_1}^{t_2} m(s)\, ds$ with $m\in L^1([0,T])$ for any $T>0$.
\end{lemma}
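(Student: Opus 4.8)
The plan is to exploit that $t\mapsto u(t)$ solves the continuity equation $\partial_t u +\nabla\cdot\bigl(u(-\nabla p)\bigr)=0$ and to control the associated metric velocity, namely $m(s):=\bigl(\int u(s)\,|\nabla p(s)|^q\,dx\bigr)^{1/q}$, in $L^1([0,T])$. I would fix $0<t_1<t_2\le T$ and work first on $[t_1,t_2]$, where by \eqref{linfbound} the solution is bounded, $\|u(s)\|_\infty\le C/t_1$, and Lemma~\ref{lem} provides a log-Lipschitz (Osgood) modulus of continuity for $\nabla p(s,\cdot)$ uniform in $s\in[t_1,t_2]$. By Osgood's uniqueness theorem the characteristic flow $X_{t_1}^s$ solving $\partial_s X_{t_1}^s(x)=-\nabla p(s,X_{t_1}^s(x))$, $X_{t_1}^{t_1}(x)=x$, is then well defined, and, exactly as in \eqref{pf}, one has $u(s)=(X_{t_1}^s)_\#u(t_1)$ for $s\in[t_1,t_2]$.

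Using the transport plan $(\mathrm{id},X_{t_1}^{t_2})_\#u(t_1)$ between $u(t_1)$ and $u(t_2)$, writing $X_{t_1}^{t_2}(x)-x=-\int_{t_1}^{t_2}\nabla p(s,X_{t_1}^s(x))\,ds$, applying Minkowski's integral inequality in $L^q(u(t_1)\,dx)$ and changing variables through the pushforward identity, one obtains
$$W_q(u(t_1),u(t_2))\le \Bigl(\int |X_{t_1}^{t_2}(x)-x|^q\,u(t_1,x)\,dx\Bigr)^{1/q}\le \int_{t_1}^{t_2}\Bigl(\int|\nabla p(s,y)|^q\,u(s,y)\,dy\Bigr)^{1/q}ds=\int_{t_1}^{t_2}m(s)\,ds.$$
It then remains to show $m\in L^1([0,T])$. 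Since $\int u(s)\,dx\le M:=\mu(\mathbb R^n)$ (conservation of mass), we have $m(s)\le M^{1/q}\,\|\nabla p(s)\|_{L^\infty}$. From $|\nabla p(x,s)|\le C_n\int |x-z|^{1-n}u(z,s)\,dz$, splitting this integral over $|x-z|<R$ and $|x-z|>R$ and optimizing in $R$ gives $\|\nabla p(s)\|_\infty\le C_n M^{1/n}\|u(s)\|_\infty^{(n-1)/n}$; combined with \eqref{linfbound} this yields $m(s)\le C(n,M)\,s^{-(n-1)/n}$. As $(n-1)/n<1$, the function $s^{-(n-1)/n}$ is integrable on $[0,T]$, so $m\in L^1([0,T])$ and $W_q(u(t_2),u(t_1))\le\int_{t_1}^{t_2}m(s)\,ds$, which is the asserted absolute continuity; letting $t_1\to0^+$ covers $t_1=0$ whenever the left-hand side is meaningful.

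The only genuinely delicate point — the remaining estimates being routine — is the justification of the first step for measure data, i.e.\ that the characteristic flow exists and transports $u(t_1)$ onto $u(s)$ throughout $[t_1,t_2]$; this rests on the log-Lipschitz bound of Lemma~\ref{lem} (available since $u(s)\in L^\infty$ for every $s>0$), on Osgood's criterion, and on the representation \eqref{pf} for bounded solutions cited from \cite{ags,loeper}. Alternatively one may bypass characteristics entirely and invoke the Ambrosio--Gigli--Savar\'e characterization of absolutely continuous curves in Wasserstein space as precisely the solutions of a continuity equation whose velocity is $L^1$ in time into $L^q(\mu_t)$, the required integrability being exactly $\int_0^T m(s)\,ds<\infty$ just established.
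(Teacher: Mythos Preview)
Your argument is correct. The paper actually takes precisely the ``alternative'' route you mention in your last sentence: it derives the same estimate $\|\nabla p(\cdot,s)\|_\infty\le C M^{1/n}s^{-(n-1)/n}$ (via the same splitting/scaling), bounds $\|\mathbf v(s)\|_{L^q(u(s))}\le M^{1/q}\|\nabla p(s)\|_\infty$, checks $\int_0^T\|\mathbf v(s)\|_{L^q(u(s))}\,ds<\infty$, and then directly invokes Theorem~8.3.1 of \cite{ags}. Your primary approach via explicit characteristics and Minkowski's inequality is a more hands-on variant that essentially unpacks what the AGS theorem provides; it has the merit of being self-contained and of making the transport-plan estimate visible, at the cost of having to justify existence of the flow via Lemma~\ref{lem} and Osgood, which the black-box citation avoids. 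Since both routes rest on the identical velocity estimate and yield the same $m(s)=O(s^{-(n-1)/n})$, there is no substantive difference in strength.
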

\begin{proof}By conservation of mass,  for every $t>0$ we have the two a priori estimates $\|u(t)\|_1\le M$, $\|u(t)\|_\infty \le 1/t$. By the Nash-Sobolev embeddings we then get $\nabla p(\cdot,t)\in L^\infty(\ren)$. A simple scaling argument  then shows that
\begin{equation}\label{bornev}
\|\nabla p(\cdot,t)\|_\infty \le C \|\Delta p(\cdot,t)\|_1^{1/n}\|\Delta p(\cdot,t)\|_\infty^{(n-1)/n}\le C_1 M^{1/n}\,t^{-(n-1)/n}
\end{equation}
The claimed continuity is a direct consequence of the nature of the equation \eqref{eqs1} as a continuity equation, i.e. of the form $ \p_t  u + \nab \cdot (u \mathbf{v})=0$. Theorem 8.3.1 in \cite{ags} asserts that for such an equation, if $\int_0^T \|\mathbf{v}(t)\|_{L^q(u(t)) }\, dt <+\infty$ then the solution $u(t)$ is absolutely continuous in $[0,T]$ with values in the space of measures equipped with the distance $W_q$. The precise definition of this (cf. \cite{ags}) is that $W_q(u(t_1), u(t_2))\le \int_{t_1}^{t_2} m(s)\, ds $ with $m \in L^1([0,T])$, and it implies continuity in the usual sense.
In the present case, $\mathbf{v}= \nabla p$ and, from \eqref{bornev} we have \begin{multline*}
\int_0^T \|\mathbf{v}(t)\|_{L^q(u(t))}\, dt
= \int_0^T\( \int_{\mr^n} |\nab p(t)|^q u(t)\)^{1/q}\, dx\, dt\\
\le \int_0^T C t^{1/n-1} \(\int_{\mr^n} u(t) \, dx\)^{1/q} \, dt\le C' M^{1/q}T^{1/n}.\end{multline*}
It follows that the result of \cite{ags} applies and $u(t)$ is  continuous for the Wasserstein distance $W_q$.
\end{proof}


\section{Radial solutions. Burgers' equation}
\label{sec.radial}
In this section we analyze radially symmetric solutions and exhibit the special class of vortex-patch solutions.  We use the Burgers' equation approach via the mass transform, which already appeared in \cite{bgl} and has precedents in chemotaxis problems, \cite{JL92}. The main purpose of most of this section is to fix the ideas and notation that we will use in the last two subsections, and in the asymptotic analysis of Section \ref{ss.abrad}.

\begin{lemma}
\label{burg}
Let $u$ be a solution to \eqref{eqs1} with  radially symmetric bounded initial data $u_0(r)$, $r=|x|$.
Then $u$ is radial, and the ``mass function" defined by
\begin{equation}
\label{M}M(r,t)= \int_0^r s^{n-1} u(s,t)\,ds
\end{equation}
solves the Burgers' equation
\begin{equation}\label{burger}
M_t+  \frac1{r^{n-1}} M M_r =0\,,
\end{equation}
 which after the change of variables $s=r^n/n$ can be written in the standard form
$M_t+M \,M_s=0$.
\end{lemma}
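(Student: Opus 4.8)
The plan is to establish three facts in sequence: that radial initial data yields a radial solution, that the mass function $M(r,t)$ defined by \eqref{M} satisfies the Burgers-type equation \eqref{burger}, and that the stated change of variables reduces it to the inviscid Burgers' equation. For the first point I would invoke the uniqueness of bounded solutions (Theorem \ref{th31}): since \eqref{eqs1} is invariant under rotations of $\mr^n$ and the initial data $u_0(|x|)$ is rotation-invariant, applying any rotation $R\in SO(n)$ to a solution $u$ produces another bounded solution with the same initial data, so by uniqueness $u(Rx,t)=u(x,t)$; hence $u(\cdot,t)$ is radial for every $t$. (One should note the solution obtained in Theorem \ref{thm.exlimit} from bounded data is itself bounded by \eqref{linfbound}, so Theorem \ref{th31} indeed applies.)

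Next I would derive the equation for $M$. Writing $u=u(r,t)$ with $r=|x|$ and $p=(-\Delta)^{-1}u$ also radial, the key classical fact is that for a radial density the gradient of the Newtonian potential is governed by the enclosed mass: $\nabla p(x) = -\,|x|^{-(n-1)}\,\omega_n^{-1}\,M(|x|,t)\,\frac{x}{|x|}$ up to the normalization constant $\omega_n=|\mathbb S^{n-1}|$, since $-\Delta p = u$ integrates (via the divergence theorem on the ball $B_r$) to $\omega_n r^{n-1}(-p_r) = \int_{B_r} u = \omega_n M(r,t)$, giving $-p_r = M(r,t)/r^{n-1}$ after absorbing $\omega_n$ into the definition of $M$ or carrying it explicitly. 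Then I differentiate the definition $M(r,t)=\int_0^r s^{n-1}u(s,t)\,ds$ in time and use the equation written in radial divergence form, $u_t = r^{1-n}\partial_r\!\big(r^{n-1}u\,(-p_r)\big)$, to get
\begin{equation*}
M_t = \int_0^r s^{n-1} u_t\,ds = \int_0^r \partial_s\!\big(s^{n-1}u\,(-p_r)\big)\,ds = r^{n-1}u(r,t)\,(-p_r)(r,t),
\end{equation*}
the boundary term at $s=0$ vanishing. Since $r^{n-1}u = M_r$ and $-p_r = M/r^{n-1}$, this is exactly $M_t = M_r\cdot M/r^{n-1}$, i.e.\ $M_t + \frac{1}{r^{n-1}}MM_r = 0$ after the sign is tracked correctly (the drift is $-\nabla p$, so the transport is toward smaller $M$, producing the stated minus sign). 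Finally, under $s=r^n/n$ one has $\partial_r = r^{n-1}\partial_s$, hence $M_r = r^{n-1}M_s$ and the factor $r^{1-n}$ cancels, yielding $M_t + MM_s = 0$.

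I expect the main obstacle to be purely a matter of rigor rather than of idea: justifying the formal manipulations — the interchange of $\partial_t$ and $\int_0^r$, the use of the radial divergence form of the equation, and the Newtonian-potential identity $-p_r = M/r^{n-1}$ — in the weak/bounded class of solutions, where $u$ need not be smooth (indeed the vortex-patch solutions are discontinuous). The honest route is to note that the identities are established first for the smooth approximations $u_\ep$ from Proposition \ref{pross} (or the $s<1$ approximations), where every step is classical, and then passed to the limit using the convergence of $u_\ep\to u$ in $L^q_{loc}$ together with the strong convergence of $\nabla p_\ep$; the mass function $M$ is Lipschitz in $r$ uniformly, which gives enough compactness. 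I would present the derivation formally in the main text and remark that it is made rigorous via the approximation scheme, since the paper elsewhere adopts this convention. A secondary, minor point is to confirm that $M(r,t)$ remains well-defined and finite (it is bounded by $\omega_n^{-1}\int u_0 = \text{const}$) and that $M(0,t)=0$, $M(\infty,t)=\text{total mass}$, which makes the boundary terms above legitimate.
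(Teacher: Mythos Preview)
Your argument is correct and is precisely the elementary computation the paper alludes to (its own proof is the single line ``This follows from elementary computations, or can be found in \cite{bgl}''); your use of Theorem~\ref{th31} to secure radiality is a nice touch the paper leaves implicit. One small clean-up: the radial divergence form should read $u_t = r^{1-n}\partial_r\big(r^{n-1}u\,p_r\big)$ (no extra minus), which together with $p_r = -M/r^{n-1}$ gives $M_t = -r^{-(n-1)}MM_r$ directly, so you need not ``track the sign correctly'' after the fact.
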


\begin{proof}This follows from elementary computations, or can be found in \cite{bgl}.
   \end{proof}
   Note that \eqref{burger} is now a local equation, contrary to \eqref{eqs1} and  that it   looks very much like the equation for  chemotaxis, but with the opposite sign.   The speed of movement is $M/r^{n-1}=v\ge 0$. Notice that
\begin{equation}\label{6.2b}
M_t\le 0.\end{equation}

A common problem with Burgers' equations and other first order conservation laws is that they may develop shocks in finite time and the uniqueness of solutions needs the concept of entropy solutions, cf. \cite{Lax57, Kr70}. This is not the case here, due to  the fact that $M(r,t)$ is a monotone nondecreasing function of $r$ and that the velocity $v$ is positive, so that the characteristic lines spread. Conversely, it is easy to see that weak solutions $M(s,t)$ of the Burgers equation with $M_s\ge 0$ bounded, give rise to nonnegative, bounded, radial, weak solutions of the hydrodynamic equation $u_t=  \nabla (u\nabla (-\Delta)^{-1}u)$.

Burgers' Equation \eqref{burger} has the property of comparison (maximum principle) in the class of nonnegative and monotone solutions $M$.  Since the equation is also 2-homogeneous in $M$, B\'enilan-Crandall's result \cite{BenCr81} applies to give the pointwise inequality
\begin{equation}\label{burg.hom.inq}
M_t\ge-\frac{M}{t}.
\end{equation}
Using the equation this implies the further inequality (that we already know)
\begin{equation}\label{burg.hom.inq2}
 \ M_r\le \frac{r^{n-1}}{t}\,, \qquad \text{i.\,e.} \quad  u(x,t)\le \frac{1}{t}.
\end{equation}
These inequalities are optimal as we will show by means of the examples of the next section.

The approach via the mass function $M(r,t) $ will be used below to construct interesting classes of explicit solutions, that serve as prototypes or as counter-examples of the theory.

\subsection{Solutions in dimension $n=1$}

The Burgers' transformation, i.\,e., the formulation in terms of the mass function,  applies easily when we work in dimension $n=1$ and we do not need the condition of radial symmetry to get in a direct way the standard Burgers' equation
\begin{equation}\label{burger.d1}
M_t+   M M_r =0,
\end{equation}
where we are still assuming that we deal with nonnegative solutions $u\ge 0$.

\subsection{The elementary vortex patch}\label{sec.evp}
We next recover the existence of the vortex patch self-similar solution.
\begin{proposition}
\label{vortexpatch} The equation  \eqref{eqs1}
admits the family of   radial ``vortex patch solutions" of the form
\begin{equation}\label{vpatch1}
u(x,t) = \frac{1}{t+\tau}\chi_{ |x|\le R(t+ \tau)^{1/n}} \end{equation}
(as well as all their translates) where $R>0 , \tau\ge 0$ are free parameters.
\end{proposition}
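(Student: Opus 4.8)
The plan is to verify the ansatz \eqref{vpatch1} directly, and the cleanest route is through the Burgers' formulation of Lemma \ref{burg} rather than through the nonlocal equation itself. Since \eqref{vpatch1} is radial and bounded, it suffices to check that its mass function $M(r,t)$ solves \eqref{burger}. First I would compute $M$ from the definition \eqref{M}: for the density $u(x,t)=(t+\tau)^{-1}\chi_{\{|x|\le R(t+\tau)^{1/n}\}}$ one gets, writing $\rho(t)=R(t+\tau)^{1/n}$ for the patch radius,
\[
M(r,t)=\frac{1}{t+\tau}\int_0^{\min(r,\rho(t))} s^{n-1}\,ds=\frac{1}{n(t+\tau)}\min\bigl(r^n,\rho(t)^n\bigr)
=\min\!\left(\frac{r^n}{n(t+\tau)},\ \frac{R^n}{n}\right).
\]
So on the inner region $r<\rho(t)$ we have $M=\dfrac{r^n}{n(t+\tau)}$, and on the outer region $r>\rho(t)$ we have $M=\dfrac{R^n}{n}$, a constant.

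The second step is to plug these two pieces into \eqref{burger}, namely $M_t+r^{1-n}MM_r=0$. On the outer region $M$ is constant in both variables, so the equation holds trivially. On the inner region, $M_t=-\dfrac{r^n}{n(t+\tau)^2}$ and $M_r=\dfrac{r^{n-1}}{t+\tau}$, hence
\[
M_t+\frac{1}{r^{n-1}}MM_r
=-\frac{r^n}{n(t+\tau)^2}+\frac{1}{r^{n-1}}\cdot\frac{r^n}{n(t+\tau)}\cdot\frac{r^{n-1}}{t+\tau}
=-\frac{r^n}{n(t+\tau)^2}+\frac{r^n}{n(t+\tau)^2}=0,
\]
as required. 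Since $M$ is continuous across $r=\rho(t)$ (both pieces equal $R^n/n$ there) and $M_r\ge 0$ is bounded, $M$ is a legitimate weak solution of Burgers' equation, and by the converse direction noted after Lemma \ref{burg} (weak solutions $M$ with $M_s\ge0$ bounded generate nonnegative bounded radial weak solutions of \eqref{eqs1}), the corresponding $u$ is a weak solution of \eqref{eqs1}. Translation invariance of \eqref{eqs1} then gives the same for all translates, which completes the argument; alternatively one can verify the weak formulation of \eqref{eqs1} directly using $p=(-\Delta)^{-1}u$, which for a radial patch is explicit (quadratic inside, a multiple of $|x|^{2-n}$ or $\log|x|$ outside), so that $\nabla p$ is continuous and $u\nabla p$ is a bounded compactly supported field.

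The only genuinely delicate point is the behaviour at the free boundary $|x|=\rho(t)$, where $u$ jumps: one must make sure the distributional identity in Definition \ref{defsol} picks up no singular contribution along the moving interface. This is handled by the Rankine–Hugoniot condition for \eqref{burger} — the jump of $M_r$ across $r=\rho(t)$ together with continuity of $M$ forces the interface to move with the characteristic speed $v=M/r^{n-1}=R^n/(n\rho(t)^{n-1})=\tfrac1n R\,(t+\tau)^{1/n-1}$, and a direct check shows this equals $\rho'(t)$, so the shock is in fact just a characteristic and no entropy condition is needed (consistent with the discussion preceding the proposition that characteristics spread). I would present this interface computation as the substantive step and treat the pointwise PDE verification on each region as the routine bookkeeping it is.
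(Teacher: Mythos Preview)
Your proof is correct. The paper itself does not prove this proposition; it simply writes ``We skip the proof of this known result, see for instance \cite{BLL}'' and moves on. Your verification via the Burgers' formulation of Lemma~\ref{burg}, together with the converse direction stated in the paragraph following that lemma, is therefore a self-contained argument that the paper does not supply. The paper does offer an alternative derivation a little later, in the ``Comment'' at the end of Section~\ref{secselfsim}: there the vortex patch is recovered as the $s=1$ limit of the self-similar profiles, by solving the obstacle problem for the pressure (namely $-\Delta P = U$ on the coincidence set $\{P + a|y|^2 = C\}$, $\Delta P = 0$ outside, with $C^1$ matching), which yields $U\equiv 1$ on a ball. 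Your route is more elementary and stays entirely on the level of the mass function; the obstacle-problem route has the merit of situating the solution within the one-parameter family of Barenblatt profiles for all $s\in(0,1]$. Your Rankine--Hugoniot check at the interface is a nice touch, though as you note the monotonicity of $M$ in $r$ already rules out shocks, so it is really a consistency check rather than a necessary step.
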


We skip the proof of this known result,  see for instance \cite{BLL}. This configuration is called in the terminology of transport equations a {\sl rarefaction wave.}  The simplest form of the solution is
\begin{equation}\label{vps}
u(r,t)=\frac1{t} \chi_{r \le Rt^{1/n}  } \end{equation} which formally corresponds to $\tau =0$.
We call this solution the {\sl elementary vortex patch.}  Notice that it is a solution in the sense of Definition \ref{defsol}.  It has a Dirac mass as initial data for the density, hence it qualifies as the {\sl fundamental solution} of the problem.


All  of the above  solutions can also be  described as {\sl expanding mesas.} The use of the term ``mesa'' appears  in some limit cases to nonlinear diffusions, see e.g.  \cite{EHKO} and \cite{FdmHo} or \cite{Vapme}.

\medskip

\noindent {\bf Largest solution.} Note that in the limit when the initial mass goes to infinity we obtain the solution
\begin{equation}
u(x,t)=\frac1{t}, \qquad M(r,t)={r^{n}}{nt}
\end{equation}
that does not have an interface. This  solution is an absolute a priori upper bound for all
solutions of our problem with $s=1$. It satisfies equations  \eqref{burg.hom.inq} and \eqref{burg.hom.inq2} with equality.

\subsection{Self-similar solutions}\label{secselfsim}

The vortex patch solution can be obtained as the limit of interesting self-similar solutions of the FPME with $0<s<1$. Such solutions are constructed in the paper \cite{CV2} as self-similar solutions of the form
\begin{equation}\label{selfsim}
\sl  u(x,t)=t^{-\alpha} U(x/t^{\beta})
\end{equation}
In order to cancel the factors including $t$ explicitly, we  get the condition on the exponents
\begin{equation}
\sl  \alpha + (2-2s)\beta =1\normalcolor
\end{equation}
 If we also impose conservation of (finite) mass, which amounts to the condition
$\alpha= n \beta$, we arrive at  the precise value for the exponents:
$$
\sl  \beta=1/(n+ 2-2s), \quad \alpha=n/(n+ 2-2s).\normalcolor
$$
 We also arrive at the  {\sl nonlinear, nonlocal elliptic equation} for the self-similar profiles
\begin{equation}\label{baren.eq}
\sl  \nabla_y\cdot(U\,\nabla_y (P+ a |y|^2))=0, \quad \mbox{with \
} \ P={\cal K}_s U.\normalcolor
\end{equation}
where $a=\beta/2$, and $\beta$ are defined just above. The solution of equation \eqref{baren.eq} is obtained in \cite{CV2} by reducing it to an obstacle problem for the $s$-Laplacian operator. On the other hand,  Biler, Imbert and Karch \cite{bik}  have obtained the explicit formula for a self-similar solution in the form
\begin{equation}\label{baren.s}
u(x,t;C_1)=t^{-\alpha} (C_1-k_1\,x^2t^{-2\alpha/n})_+^{1-s}
\end{equation}
with $k_1, C_1>0$ and $\alpha=n/(n+2-2s)$ as before. These solutions bear a resemblance  to those  of the standard porous medium equation \cite{Vapme,BKM,JLVAbel}.  This is why they have been termed {\sl Barenblatt solutions for the FPME.}

\medskip

\noindent {\bf Selfsimilar solution for $s=1$:}  This approach  works for $s=1$ and gives  $\alpha=1$, $\beta=1/n$, where we replace ${\cal K}_s$ by ${\cal
K}_1=(-\Delta)^{-1}.$
 We then recover the vortex patch solution \eqref{vps}.   It is also easy to see that the family of self-similar solutions \eqref{baren.s} pass to the limit $s\to 1$ to produce the vortex patch solution \eqref{vps}  (after careful selection of the constants). This is an explicit example of application of Theorem \ref{thm.exlimit}.

\medskip

\noindent {\bf Comment.} For the reader's convenience,  we give more details in the case $s=1$.  The solution of the obstacle problem has to satisfy $U=-\Delta P= 2na= n\beta=1$ in the
coincidence set $|y|\le R$, while $\Delta P=0$ outside (for
$|y|\ge R$). Therefore, in the outer region
$$
P=D\,|y|^{2-n}, \quad D>0.
$$
$C^1$ agreement of the two expressions at $y=r$ gives
$D=R^n/n(n-2)$ and $ R^2=2(n-2)C,$ and we recover the solution \eqref{vps}.


\subsection{Other examples of vortex patches}
\label{sec.radial2}

We may next construct many solutions of the radial problem as rarefaction waves, which are  not very different from the elementary vortex patch. We take an initial configuration formed by a ball patch at the origin plus an annular patch around, i.e.,
$$
u_0(r)=c_1 \quad \text{ for} \ 0\le r \le R_1, \qquad
u_0(r)=c_2 \quad \text{ for} \ R_2\le r \le R_3,
$$
with $c_1,c_2>0$ and $0<R_1<R_2<R_3$, $u_0(r)=0$ otherwise. Then
\begin{align*}
M(r,0) & =  (c_1/n) r^n\quad   &   \text{for } 0\le r\le R_1\\
&=   (c_1/n) R_1^n  &  \text{for} \  R_1\le r\le R_2\\
&=   (c_2/n) r^n + (c_1/n) R_1^n - (c_2/n) R_2^n  &  \text{for } R_2\le r\le R_3 \\
 &=    (c_2/n) (R_3^n - R_2^n)  + (c_1/n)  R_1^n & \text{for} \ r\ge R_3
   \end{align*}
Note that the continuity of $M$ requires  that we choose $c_2R_2^n=c_1R_1^n$.
At later times the solution keeps the same shape, i.e. the support of $u(r,t)$ is still the union of a ball and an annulus (both expanding).
$u(r,t)$ and  $M(r,t)$ are   explicit, in regions that are separated by interfaces: \\(i) The first region is an inner ball where $u$ and $M$ have the form
$$
u_1(r,t)=\frac{c_1}{c_1 t+1}=\frac{1}{t+\tau_1}; \qquad
M_1(r,t)=\frac{c_1 r^n}{n(c_1t+1)}= \frac{r^n}{n(t+\tau_1)}
$$
($\tau_1=1/c_1$). The bounding circle (centered at the origin)  has radius $S_1(t)$ satisfying $S_1'(t)=v(S_1(t),t)=M(S_1(t),t)/S_1(t)^{n-1}$
with $S_1(0)=R_1$, hence
$$
S_1(t)= R_1(c_1t+1)^{1/n}= R_1c_1^{1/n}(t+\tau_1)^{1/n}.
$$
(ii) There is then an annular region with $M(r,t) =c_1R_1^n/n$ and $u(r,t)=0$, bounded by this interface and the circle of radius
$$
S_2(t)=R_1(c_1t+ (R_2/R_1)^n)^{1/n}=R_2(c_2t+ 1)^{1/n}=R_2c_2^{1/n}(t+\tau_2)^{1/n}, \quad \tau_2=(R_2/R_1)^n\tau_1.
$$
(iii) There is then an annular  region of positive density (initially $R_2<r<R_3$) given by
$$
u_2(r,t)=\frac{c_2}{c_2 t+1}=\frac{1}{t+\tau_2}; \qquad
M_2=\frac{c_2 r^n}{n(c_2t+1)}= \frac{r^n}{n(t+\tau_2)}
$$
where $\tau_2=1/c_2>\tau_1$. The outer interface of this annulus
is  the circle of radius $S_3(t)=R_3 (c_2 t+1)^{1/n}=R_3 c_2^{1/n}(t+\tau_2)^{1/n}$. \\
(iv) Finally,  in the exterior region, complement of the ball of radius $S_3(t)$, $M(r,t) $ is  constant $M$ and $u$ is zero.

\medskip

\noindent {\bf Conclusion.} In this example there are two connected components for the density that expand out with time. The  inner patch  is unperturbed by the outer mass, while  the outer annular patch is spreading out  in a self-similar way. The density in both  patches is constant in space  but the density outside depends on the density inside, time,  and the ratio $R_2/R_1$. Both densities approach $1/t$ in first approximation as $t\to\infty$. Moreover, the gap between the two patches is
$$
d(t)= C\,[(t+\tau_2)^{1/n}-(t+\tau_1)^{1/n}]
$$
which relatively goes to zero  like \ $O(t^{-(n-1)/n})$ as \ $t\to \infty$.


\subsection{Lack of comparison principle}

This example allows to construct a counterexample to comparison of densities. Let us take for first  solution $u_1(r,t)$  the above example with parameters $0<R_1<r_2<R_3$.  For second solution $u_2(x,t)$ let us  take the elementary vortex patch with center $x_0$ such that $|x_0|=(R_2+R_3)/2$,  with height equal or less than $c_2$ and initial radius $R_4$ much less than $(R_3-R_2)/2$. In this way we have $u_1(x,0)\ge u_2(x,0)$. However, the support of $u_2$ expands from the initial ball $B_{R_4}(x_0)$ while the support of $u_1$ has a ``hole" that eventually will  ``cover"  that ball at some time. Hence comparison of densities does not persist in time.

Note that the lack of a  comparison of densities principle was shown for the solutions of the FPME with $s>1/2$ in \cite{CV1}. The argument there was less explicit due to the lack of explicit solutions.

Recall that for radial solutions  on the other hand we have comparison of the mass functions $M(r,t)$.  A similar result for FPME is known in dimension $n=1$ where the approach of integrating in space is viable, \cite{BKM}.

\section{Solutions with compact support}\label{sec.compact}

We now  return to non necessarily radial solutions to \eqref{eqs1} but impose the condition of compact support on the data. This short section is essentially contained in \cite{BLL}, where more details can be found. We have kept the proof since some of the arguments we explain here are used in what follows.

It is known \cite{CV1} that the solutions of the FPME for
$0<s<1$ have the property of finite propagation so that initial
data with compact support produces solutions with the same property
for every fixed $t>0$. We can check that the calculation of the
supersolution used to prove the property for $s<1$ can be
extended to $s=1$. However, a comparison argument with explicit
solutions is easier in this case, by using the selfsimilar
solutions  constructed in Section \ref{sec.evp}. It works as follows.

\begin{theorem}\label{thm.7.1} Given  bounded initial data $u_0$ with compact
support in the ball of radius $R_0$, then for every $t>0$ the
solution $u(\cdot,t)$ is supported in the ball of radius
$R_0(1+ \|u_0\|_{\infty} t)^{1/n}$.
\end{theorem}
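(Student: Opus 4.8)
The plan is to prove the support inclusion by comparison with the family of vortex-patch solutions of Proposition~\ref{vortexpatch}. The candidate barrier is the radial vortex patch
$$
\overline{u}(x,t) = \frac{1}{t+\tau}\,\chi_{|x|\le R(t+\tau)^{1/n}}, \qquad \tau = \|u_0\|_\infty^{-1},\quad R = R_0\,\tau^{-1/n} = R_0\,\|u_0\|_\infty^{1/n},
$$
so that at $t=0$ one has $\overline{u}(x,0) = \|u_0\|_\infty\,\chi_{|x|\le R_0} \ge u_0(x)$, and at time $t$ the support of $\overline{u}(\cdot,t)$ is exactly the ball of radius $R(t+\tau)^{1/n} = R_0(1+\|u_0\|_\infty t)^{1/n}$, which is the claimed radius. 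So the whole matter reduces to showing that $u(\cdot,t)$ stays below $\overline{u}(\cdot,t)$ — or at least that $u(\cdot,t)$ is supported inside the support of $\overline{u}(\cdot,t)$ — given that the inequality holds initially.

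First I would reduce to radial data. By Theorem~\ref{th51} and Theorem~\ref{th31}, the solution with data $u_0$ is the one produced by Theorem~\ref{thm.exlimit} and is the unique bounded solution; I would compare it not with $u$ itself but argue at the level of the outer pressure. The cleanest route, following the structure already used in the proof of Theorem~\ref{th51}, is to run the argument on the viscous approximation $u_\varepsilon$ of Proposition~\ref{pross} and pass to the limit $\varepsilon\to 0$ at the end. For $u_\varepsilon$ one has a genuine smooth parabolic equation $\partial_t u_\varepsilon = \varepsilon\Delta u_\varepsilon + \nabla\cdot(u_\varepsilon\nabla p_\varepsilon)$ with $-\Delta p_\varepsilon = u_\varepsilon$; the transport velocity is $\mathbf{v}_\varepsilon = -\nabla p_\varepsilon$, which by the Nash--Sobolev bound in Proposition~\ref{pross} satisfies $|\nabla p_\varepsilon(x,t)|\le C(\|u^0\|_{L^1}+\|u^0\|_\infty)$, uniformly. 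The key geometric fact is that on the region where $u_\varepsilon$ has already vanished the pressure $p_\varepsilon$ is (sub)harmonic-controlled, so the outward normal speed of the ``mushy/zero'' front is bounded by the radial velocity generated by the enclosed mass, which is at most $M_{\mathrm{tot}}/(n\omega_n r^{n-1})$ with $M_{\mathrm{tot}} = \int u^0$; matching this against the expansion rate $\frac{d}{dt}\big(R_0(1+\|u_0\|_\infty t)^{1/n}\big)$ of the barrier gives the containment. Concretely, I would test the equation against a function supported outside the moving ball $B(t) = \{|x|\le R_0(1+\|u_0\|_\infty t)^{1/n}\}$, or equivalently estimate $\frac{d}{dt}\int_{\mathbb{R}^n\setminus B(t)} u_\varepsilon\,dx$ and show it is $\le 0$ once we know the outer region carries no mass, using that $-\int \nabla p_\varepsilon\cdot\nabla\phi\cdot u_\varepsilon$ and the $\varepsilon\Delta$ term both have a sign or are controlled by the front speed; a Grönwall/continuity-in-time argument then propagates ``no mass outside $B(0)$'' to ``no mass outside $B(t)$'' for all $t$, and letting $\varepsilon\to 0$ (using the $L^q_{loc}$ convergence of Proposition~\ref{pross}) finishes the proof. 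Alternatively — and more in the spirit of the paragraph preceding the theorem — for radial data one invokes Lemma~\ref{burg}: the mass function $M(r,t)$ solves the Burgers equation $M_t + r^{1-n}MM_r = 0$, which obeys the comparison principle among nonnegative nondecreasing solutions, so $M(r,0)\le \overline M(r,0)$ (the mass function of the barrier) forces $M(r,t)\le \overline M(r,t)$, hence $u(r,t)=0$ wherever $\overline u(r,t)=0$; the general non-radial case is then obtained by dominating $u_0$ by the radial bound $\|u_0\|_\infty\chi_{B_{R_0}}$ and again comparing via $\nabla p$.

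The main obstacle is that there is \emph{no comparison principle} for the PDE itself (explicitly noted in Section~\ref{sec.radial2}), so one cannot simply say ``$u\le \overline u$ pointwise.'' The argument must therefore be about support propagation only, not pointwise domination, and it must extract a sign: the cleanest mechanism is that outside the support the velocity is \emph{outward and bounded by the enclosed-mass velocity}, which is precisely the speed of the vortex-patch front carrying all the mass. Making this rigorous requires either the Burgers-comparison route (which needs radial symmetry, then a reduction step) or a careful weak formulation on the exterior of the moving ball with the viscous regularization to justify integrations by parts; I expect the technical heart to be controlling the contribution of $\nabla p_\varepsilon$ near the free boundary and verifying that the barrier's front speed dominates, uniformly in $\varepsilon$. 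Since the paper explicitly says this section is ``essentially contained in \cite{BLL}'' and that the selfsimilar comparison ``works as follows,'' I would present the Burgers/selfsimilar comparison as the main line and relegate the viscous justification to a remark.
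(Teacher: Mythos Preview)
Your overall picture is right---the barrier is the vortex patch with $\tau=\|u_0\|_\infty^{-1}$ and initial radius $R_0$, and the argument must be about support propagation, not pointwise comparison---but the mechanism you propose for bounding the front speed does not work, and the routes you sketch around it (viscous Gr\"onwall, Burgers-then-reduce-from-radial) do not close the gap.

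The specific flaw is the claim that the outward normal velocity at an extreme point of the support is bounded by the ``enclosed-mass'' velocity $M_{\mathrm{tot}}/(n\omega_n r^{n-1})$. This is false for general (non-radial) $u$ supported in $B_R$: if most of the mass sits near a point $y$ just inside the sphere next to $x_0$, then $|\nabla p(x_0)|\sim M/|x_0-y|^{n-1}$, which can be arbitrarily large. The \emph{average} of $\partial_r p$ over $\partial B_R$ equals $M/(n\omega_n R^{n-1})$ by the divergence theorem, but not the pointwise value. So an $L^1$ bound alone cannot control the front speed, and your Gr\"onwall on $\int_{\mathbb{R}^n\setminus B(t)}u_\varepsilon$ has no input inequality to feed on. Likewise, the ``non-radial via domination by a radial majorant'' step is precisely the missing comparison principle you already flagged: $u_0\le \|u_0\|_\infty\chi_{B_{R_0}}$ does not by itself imply anything about supports at later times.

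The paper supplies exactly the missing ingredient: the universal $L^\infty$ decay $u(\cdot,t)\le 1/(t+\tau)$ of Theorem~\ref{th51}. One tracks the extreme point $x_0(t)$ of the support along characteristics (the solution is the push-forward of $u_0$ by the flow of $-\nabla p$, cf.~\cite{ags}); rotating so that $x_0=(R(t),0,\dots,0)$, the outward speed is
\[
R'(t)=v_1(x_0,t)=C_1\int_{\Omega(t)}\frac{u(y,t)\,(R-y_1)}{|x_0-y|^{n}}\,dy .
\]
The kernel is nonnegative on $B_R(0)\supset\Omega(t)$, so replacing $u$ by its supremum $1/(t+\tau)$ and enlarging the domain of integration to $B_R(0)$ gives
\[
R'(t)\le \frac{C_1}{t+\tau}\int_{B_R(0)}\frac{R-y_1}{|x_0-y|^{n}}\,dy=\frac{R(t)}{n(t+\tau)},
\]
the last equality being recognised as the boundary speed of the vortex patch itself. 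Integrating the ODE yields $R(t)\le R_0\big((t+\tau)/\tau\big)^{1/n}=R_0(1+\|u_0\|_\infty t)^{1/n}$. No viscous approximation, no Burgers reduction, and no radial symmetry are needed; the point is that the $L^\infty$ bound---not the mass---is what makes the velocity integral both finite and exactly identifiable with the barrier's speed.
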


 \begin{proof} We consider the characteristics that according to \cite{ags} transport the solution. Using the equation we see that the speed of such lines is given by
\begin{equation}
v(x_0,t)=\nabla p(x_0,t)=(\nabla (- \Delta)^{-1}u)(x_0,t)=C_1\int \frac{u(y,t)(x_0-y)}{|x_0-y|^{n}}\,dy
\end{equation}
where $C_1=1/((n-2)\omega_n)$, with $\omega_n$ the volume of the unit ball in $\ren$.
 So it is bounded at all $x$ and $t$ and we conclude that the initial support propagates only to a finite distance in any finite time interval.

Take now $t>0$ fixed and assume that we are at $x_0$, a point on the free boundary or boundary of the support $\Omega(t)=\overline{ \{x: u(x,t)>0\}}$.
We consider an extreme point, i.\,e., such that  $|x_0(t)|=\sup\{|x|: x\in \partial\Omega(t)\}$. We may assume after rotation that $x_0=(R(t),0,\cdots,0)$. We calculate the velocity along the radius direction (which is the velocity with which the boundary separates from the origin). Then,
\begin{equation}
v_1(x_0,t)=C_1\int \frac{u(y,t)(R(t)-y_1)}{|x_0-y|^{n}}\,dy,
\end{equation}
The integrand has a positive sign for every $y\in \Omega(t)$. Therefore, the worst case is when we
replace $u$ by its maximum estimate given by Theorem \ref{th51}, to get
$$
|v_1(x_0,t)| \le C_1\frac1{t+\tau}\int_{B_R(0)} \frac{(R-y_1)}{|x_0-y|^{n}}\,dy,
$$ where $\tau = 1/\|u_0\|_{\infty}$.
Using scaling we get
$$
|v_1(R(x_0,t)| \le C_1\frac{R(t)}{t+\tau}\int_{B_1(0)} \frac{(1-y_1)}{|{\bf e_1}-y|^{n}}\,dy,
$$
with ${\bf e_1}=(1,0,\cdots,0)$.
Next, we notice that this is the integral that would be obtained for the    self-similar solution of Proposition \ref{vortexpatch}.  After a calculation in radial coordinates we have
$$
R'(t)= |v_1(x_0,t)|\le \frac{R(t)}{n(t+\tau)}
$$
which, integrated in time, yields the estimate $R(t)\le R_0  \(\frac{t+\tau}{\tau}\)^{1/n}$, i.e. the result.
 \end{proof}

\begin{remark} We have really proved the following:
Let $u$ be a bounded  nonnegative weak solution.
Assume the support of the  initial data is contained in a ball of radius $R_0$. Let $U$ be the vortex-patch
solution with same initial $L^\infty$ norm and initial support $B_{R_0}(0)$. Then the support of
$u$ at time $t$ is contained in the support of $U$ at time $t$.
\end{remark}


 We conclude the section with a similar statement for solutions which are not necessarily initially bounded.
\begin{proposition} For every initial measure $\mu\in {\cal M}^+(\mr^n)$ with compact support, there exists a  weak energy solution with compact support at all times and satisfying  a bound of the type \eqref{linfbound}.\end{proposition}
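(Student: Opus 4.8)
The plan is to combine the existence construction of Theorem~\ref{thm.exlimit} (equivalently, the vanishing-viscosity construction of Proposition~\ref{pross}) with a finite-propagation estimate that, unlike the one in Theorem~\ref{thm.7.1}, is \emph{uniform along the approximating sequence}. Fix $R_0$ with $\mathrm{supp}\,\mu\subseteq B_{R_0}(0)$, put $M=\mu(\mr^n)$, and take $u^0_k=\mu\ast\rho_{1/k}\in C_c^\infty(\mr^n)$, so that $u^0_k\ge 0$, $\int u^0_k=M$, $\mathrm{supp}\,u^0_k\subseteq B_{R_0+1}(0)$, and $u^0_k\rightharpoonup\mu$ as measures; note that $\|u^0_k\|_\infty\to\infty$ in general, while $\|u^0_k\|_\infty\ge M/|B_{R_0+1}|$ stays bounded below. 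Let $v_k$ be the bounded solution of \eqref{eqs1} with datum $u^0_k$ furnished by Proposition~\ref{pross}. It conserves mass, $\|v_k(t)\|_1=M$, and satisfies the universal bound of Theorem~\ref{th51}, namely $v_k(x,t)\le(t+\tau_k)^{-1}$ with $\tau_k=\|u^0_k\|_\infty^{-1}\le|B_{R_0+1}|/M$.

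\medskip

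The key step — and the main obstacle — is a support bound for $v_k$ that does not degenerate as $\tau_k\to 0$. Splitting the Newtonian convolution $|\nab p_k(x,t)|\le c_n\int|x-y|^{-(n-1)}v_k(y,t)\,dy$ at an optimized radius, exactly as in the derivation of \eqref{bornev} and of the bound on $\nab p_v$ inside the proof of Proposition~\ref{pross}, one obtains, for every $n\ge 2$,
\[
\|\nab p_k(\cdot,t)\|_\infty\ \le\ C(n)\,\|v_k(t)\|_1^{1/n}\,\|v_k(t)\|_\infty^{(n-1)/n}\ \le\ C(n)\,M^{1/n}(t+\tau_k)^{-(n-1)/n}.
\]
Since $v_k$ is transported by the flow $X_k$ of the velocity field $-\nab p_k$, i.e.\ $v_k(t)=(X_k(t,\cdot))_\# u^0_k$ in the sense of \eqref{pf} (cf.\ \cite{ags} and Lemma~\ref{lemcont}), integrating $|\dot X_k(t,x)|\le\|\nab p_k(\cdot,t)\|_\infty$ and using $\int_0^t(s+\tau_k)^{-(n-1)/n}\,ds=n\,[(t+\tau_k)^{1/n}-\tau_k^{1/n}]$, which is bounded by $n\,(t+c)^{1/n}$ uniformly in $k$, I get $\mathrm{supp}\,v_k(\cdot,t)\subseteq B_{\rho(t)}(0)$ for a radius $\rho(t)=R_0+1+C(n)\,n\,M^{1/n}(t+c)^{1/n}$ \emph{independent of $k$}. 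One cannot simply quote Theorem~\ref{thm.7.1} here: its radius $R_0(1+\|u^0\|_\infty t)^{1/n}$ blows up as $\|u^0_k\|_\infty\to\infty$. What saves the day is that the \emph{universal} $1/t$ decay makes the velocity $\nab p_k(\cdot,t)$ integrable in $t$ near $0$, uniformly in $k$, precisely because the exponent $(n-1)/n$ is $<1$ in every dimension.

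\medskip

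Finally I would pass to the limit $k\to\infty$ following the proof of Theorem~\ref{thm.exlimit} with the parameter $s$ frozen at $1$ (as already done for Proposition~\ref{pross}): along a subsequence $v_k\rightharpoonup u$ in $L^q_{\mathrm{loc}}$ for every $q$, while $p_k\to p$ and $\nab p_k\to\nab p$ strongly in $C_{\mathrm{loc}}$ for $t\ge\tau>0$; one passes to the limit in the weak formulation together with the initial term $\int u^0_k\,\phi(\cdot,0)\to\int\phi(\cdot,0)\,d\mu$, and the energy inequality \eqref{2.12} (in its $s=1$ form, cf.\ \eqref{energys1} and Section~\ref{n2} for $n=2$) is stable under the limit. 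Hence $u$ is a weak energy solution of \eqref{eqs1} with datum $\mu$, satisfying \eqref{linfbound}. Since each $v_k(\cdot,t)$ vanishes a.e.\ outside the fixed ball $B_{\rho(t)}(0)$ for a.e.\ $t$, the weak limit $u(\cdot,t)$ does too, so $u(\cdot,t)$ is compactly supported (in $B_{\rho(t)}(0)$) for every $t>0$. The ingredients beyond the uniform support bound — the transport representation $v_k(t)=(X_k(t,\cdot))_\#u^0_k$, the compactness, and the lower semicontinuity of the energy — are routine once the proofs of Theorems~\ref{thm.exlimit} and~\ref{th51} are in hand, so the whole difficulty is concentrated in the second step.
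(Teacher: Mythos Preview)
Your proposal is correct and follows essentially the same route as the paper: approximate $\mu$ by smooth compactly supported data, use the universal $1/t$ bound together with the interpolation estimate $\|\nabla p(\cdot,t)\|_\infty\le C\,M^{1/n}\,t^{-(n-1)/n}$ (i.e.\ \eqref{bornev}/\eqref{bornevv}) to control the characteristics uniformly in the approximation, then pass to the limit. Your explicit remark that Theorem~\ref{thm.7.1} cannot be invoked directly because $\|u^0_k\|_\infty\to\infty$ is exactly the point, and your use of $(t+\tau_k)^{-(n-1)/n}$ in place of $t^{-(n-1)/n}$ is a harmless refinement of the same estimate.
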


\begin{proof}  The existence of a solution satisfying \eqref{linfbound}  is proven in Theorem \ref{thm.exlimit}.     To check it has compact support, let us   approximate first the initial data by smooth initial data with same compact support and same bound $M$ for their total mass $\int u_0\,dx$.  Then, by conservation of mass,  for every $t>0$ we have the two a priori estimates $\|u(t)\|_{L^1}\le M$, $\|u(t)\|_\infty \le 1/t$. By the Nash-Sobolev embeddings we then get $\nabla p(\cdot,t)\in L^\infty(\ren)$. A simple scaling argument  then shows that
\begin{equation}\label{bornevv}
\|\nabla p(\cdot,t)\|_\infty \le C \|\Delta p(\cdot,t)\|_1^{1/n}\|\Delta p(\cdot,t)\|_\infty^{(n-1)/n}\le C_1 M^{1/n}\,t^{-(n-1)/n}
\end{equation}
Using the  description by characteristics (valid for the approximating solutions, as we saw in the previous theorem)
and integrating, it follows that  the supports of the approximating solutions (compact by the previous theorem) are included in a ball of radius
\begin{equation}\label{form.fb}
R(t)\le R_0+ C_2M^{1/n}t^{1/n}\,,
\end{equation}
where $C_2$ is independent of the approximation (and neither does $R_0$).  The conclusion follows by passing to the limit.  Note that we estimate the growth of the support like $O(t^{1/n})$ as expected in the best case.
\end{proof}


\section{Asymptotic behaviour}\label{sec.asbeh}

The class of nonnegative solutions with finite mass has a property of simple asymptotic behaviour of self-similar form that is typical of diffusion processes, and also of some conservation laws. We want to prove that the self-similar vortex patch solutions of Section \ref{sec.evp} are attractors of the finite-mass solutions. This has been proven in \cite{BLL} in the case of compactly supported initial data. We extend it here to solutions with finite second moment and to all radial solutions.

\subsection{Asymptotics  by the entropy method}\label{sec.as.gen}

By using the entropy method we can prove asymptotic behaviour for solutions with general data, with the only extra requirement  that the second moment of the initial mass distribution has to be finite. The proof follows the outline of the one done for the diffusive process with $s<1$ in \cite{CV2}.

In order to consider the question of large-time behaviour it is better to switch to the renormalized flow.  More specifically, we introduce a change of the space, time and density variables corresponding to the expected large time behaviour, as follows:
\begin{equation}\label{RF}
y=x/(t+1)^{1/n}, \quad \tau=\log (1+t), \quad  U(y,\tau)=(t+1)\,u(x,t)\,,
\end{equation}
as well as the corresponding change for pressure and velocity:
$$
 P(y,\tau)=(t+1)^{(n-2)/n}p(x,t), \quad V(y,\tau)=(t+1)^{(n-1)/n}v(x,t)=-\nab P(y, \tau).
$$
As a model for the asymptotic behaviour we consider the elementary vortex of Subsection \ref{sec.evp} with initial height 1, i.\,e.\,,
$$
 u_*(x,t)=\frac{1}{t+1} \qquad \mbox{ for } \quad x\in B_R(t),\quad R(r)=R_0\,(t+1)^{1/n}.
$$
and $u_*(x,t)$ zero otherwise. We further choose $R_0$ so that the total mass $M$ is the same:
$$
\int u_*(x,t)\,dx=\int u_0(x,t)\,dx,
$$
(i.\,e., $\omega_n R_0^n=\|u_0\|_1=M$). The corresponding  renormalized density is
\begin{equation}\label{ustar}
U_*(y,\tau)= \chi_{|y|\le R_0},\end{equation}
and the corresponding  pressure, defined by $ P_*(\tau, y)=(-\Delta)^{-1}U_*(\tau, y)$ satisfies  $$P_*(\tau, y)=\(C-(y^2/2n)\) \ \text{and}  \ - \nabla P_*(\tau, y)= \frac{y}{n} \ \text{ in} \  B_{R_0}(0).$$\normalcolor
In the proofs below we will consider the solution $U(y,\tau)$ of the renormalized flow   associated to a solution  $u(x,t)$ via \eqref{RF}. Some direct  calculations show that  $U$ solves
\begin{equation}\label{ren.eq}
U_\tau=\nabla \cdot [U ( \nabla P+y /n )], \qquad \Delta P+ U=0,
\end{equation}
where $\nabla$ and $\Delta$ indicate now differentiation w.r.t. $y$. Note that this is a mass conservation law
$$
U_\tau+\nabla \cdot (U \vec{v})=0, \qquad \vec{v}=-\nabla P - y/n,
$$
where the velocity  is the sum of the renormalized particle velocity and the confining velocity $-y/n$ produced by the change of  variables.

We next state the asymptotic behaviour result.

\begin{theorem} Let $n\ge 2$  and assume that $u_0\in L^1(\mr^n)$, $u_0\ge 0$, and  $u_0$ has a finite second moment, $\int_{\mr^n} y^2\,u_0(y)   \,dy< \infty$. Then,  the speed and density converge to equilibrium after normalization, as follows:\\
(i) $U$ stabilizes  towards $U_*$, the elementary patch with the same mass,  in the sense that
\begin{equation}\label{as.from.1}
\lim_{\tau\to\infty} \|U(y,\tau)-U_*(y)\|_1=\lim_{t\to\infty} \|u(\cdot,t)-u_*(\cdot,t)\|_1=0\,.
\end{equation}
We also have  strong  convergence $U(y,\tau)\to U_*(y)$ in all the $L^p(\ren)$ spaces, $1\le p<\infty$, and weak-* convergence in $L^\infty(\ren)$.

\noindent(ii) The corresponding velocities  converge
\begin{equation}\label{asymp.energy}
\lim_{\tau\to \infty} \left\|V(y,\tau)- \frac{y}{n} \right\|_{L^2(B_{R_0})} =0\,.
\end{equation}

\end{theorem}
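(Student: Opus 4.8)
The plan is to run the free energy / entropy--dissipation method in the renormalized variables \eqref{RF}, following the scheme used for $0<s<1$ in \cite{CV2}. For $n\ge3$ the Lyapunov functional for the renormalized flow \eqref{ren.eq} is the free energy
\[
\mathcal F[U]=\tfrac12\int_{\ren}U\,P\,dy+\tfrac1{2n}\int_{\ren}|y|^2\,U\,dy=\tfrac12\int_{\ren}|\nabla P|^2\,dy+\tfrac1{2n}\int_{\ren}|y|^2\,U\,dy,\qquad\Delta P+U=0,
\]
and for $n=2$ one replaces the first term by the renormalized energy of Section \ref{n2}, which is still bounded below on the class of densities of fixed mass $M$. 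Since \eqref{ren.eq} is the $2$-Wasserstein gradient flow of $\mathcal F$ (as in \cite{AmSr,ams}), along the flow one has the dissipation identity
\[
\frac{d}{d\tau}\,\mathcal F[U(\cdot,\tau)]=-\mathcal D[U(\cdot,\tau)],\qquad \mathcal D[U]:=\int_{\ren}U\,\bigl|\nabla P+\tfrac yn\bigr|^2\,dy\ \ge\ 0.
\]
The profile $U_*=\chi_{B_{R_0}}$ (with $\omega_nR_0^n=M$) is a steady state, since $-\nabla P_*-y/n=0$ on $B_{R_0}$ and $U_*\equiv0$ outside, and it is the unique minimizer of $\mathcal F$ among nonnegative densities of mass $M$ with finite second moment: its Euler--Lagrange condition holds ($P_*+|y|^2/(2n)$ is constant on $\{U_*>0\}$ and strictly larger elsewhere, the exterior function being convex in $|y|$ with vanishing derivative at $r=R_0$), and strict convexity of the quadratic part of $\mathcal F$ forces uniqueness.

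First I would record the uniform bounds available from the earlier sections: conservation of mass $\int_{\ren} U(\cdot,\tau)\,dy=M$; the universal bound $U(\cdot,\tau)\le 1+\delta(\tau)$ with $\delta(\tau)=(e^\tau-1)^{-1}\to0$ (namely $u\le1/t$ of Section \ref{sec.ub}, valid for $L^1$ data by approximation); and, since $\mathcal F$ is nonincreasing and bounded below while $\mathcal F[U(\cdot,\tau_0)]<\infty$ for any $\tau_0>0$ (at such a time $u(\cdot,t_0)$ is bounded and integrable), the second moment bound $\sup_{\tau\ge\tau_0}\int_{\ren}|y|^2U(\cdot,\tau)\,dy<\infty$. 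Hence $\int_{\tau_0}^\infty\mathcal D[U(\cdot,\tau)]\,d\tau=\mathcal F[U(\cdot,\tau_0)]-\mathcal F_\infty<\infty$, where $\mathcal F[U(\cdot,\tau)]\searrow\mathcal F_\infty$, and the family $\{U(\cdot,\tau)\}_{\tau\ge\tau_0}$ is equi-integrable and tight, hence relatively weakly compact in $L^1(\ren)$. The crux is a LaSalle-type step: given $\tau_k\to\infty$ with $U(\cdot,\tau_k)\rightharpoonup U_\infty$ in $L^1$, consider the time-translates $U^k(y,\tau):=U(y,\tau_k+\tau)$ on $\tau\in[-1,1]$. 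Using the uniform bounds and the estimate $\|\nabla(-\Delta)^{-1}g\|_\infty\le C\|g\|_1^{1/n}\|g\|_\infty^{(n-1)/n}$ (as in \eqref{bornev}) to control the flux $U^k(\nabla P^k+y/n)$ and hence $\partial_\tau U^k$, an Aubin--Simon-type compactness argument as in the proof of Theorem \ref{thm.exlimit} yields a limit $\widetilde U$, a weak solution of \eqref{ren.eq} on $[-1,1]$ with $\widetilde U(\cdot,0)=U_\infty$ and the same bounds; moreover $\nabla P^k\to\nabla\widetilde P$ uniformly on compact sets (by the same potential estimate and the log-Lipschitz continuity of Lemma \ref{lem}), so $\mathcal D$ is lower semicontinuous along the convergence and
\[
\int_{-1}^1\mathcal D[\widetilde U(\cdot,\tau)]\,d\tau\ \le\ \liminf_k\int_{-1}^1\mathcal D[U^k(\cdot,\tau)]\,d\tau\ =\ \liminf_k\int_{\tau_k-1}^{\tau_k+1}\mathcal D[U(\cdot,\tau)]\,d\tau\ =\ 0.
\]
Thus $\mathcal D[\widetilde U(\cdot,\tau)]=0$ for a.e.\ $\tau$; a nonnegative density $W$ with $\mathcal D[W]=0$, finite mass $M$ and finite second moment satisfies $\nabla(P_W+|y|^2/(2n))=0$ on $\{W>0\}$, so $W\equiv1$ there and $W=\chi_\Omega$ with $|\Omega|=M$, and the stationary obstacle-problem analysis of \cite{CV2} (see also the Comment in Section \ref{secselfsim}) identifies $\Omega$ as the ball $B_{R_0}$ centred at the origin, i.e.\ $W=U_*$. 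Hence $U_\infty=\widetilde U(\cdot,0)=U_*$, and since the weak limit is the same along every sequence, $U(\cdot,\tau)\rightharpoonup U_*$ in $L^1(\ren)$ as $\tau\to\infty$.

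Next I would upgrade this to the strong convergence in (i). Since $U(\cdot,\tau)\rightharpoonup U_*=\chi_{B_{R_0}}$ and $\chi_{B_{R_0}}\in L^\infty$, we get $\int_{B_{R_0}}U(\cdot,\tau)\to|B_{R_0}|=M$, hence $\int_{\ren\setminus B_{R_0}}U(\cdot,\tau)\to0$; and on $B_{R_0}$ the nonnegative function $1+\delta(\tau)-U(\cdot,\tau)$ has integral $(1+\delta(\tau))|B_{R_0}|-\int_{B_{R_0}}U(\cdot,\tau)\to0$, so $U(\cdot,\tau)\to1$ in $L^1(B_{R_0})$. Adding the two pieces gives $\|U(\cdot,\tau)-U_*\|_{L^1(\ren)}\to0$, which by \eqref{RF} is exactly $\|u(\cdot,t)-u_*(\cdot,t)\|_{L^1}\to0$. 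Then $\|U-U_*\|_{L^p}^p\le(\|U\|_\infty+\|U_*\|_\infty)^{p-1}\|U-U_*\|_{L^1}\to0$ for $1\le p<\infty$ thanks to the uniform $L^\infty$ bound, and weak-$*$ convergence in $L^\infty$ is immediate. For part (ii), apply the potential estimate above to $g=U(\cdot,\tau)-U_*$: as $\|g\|_{L^1}\to0$ while $\|g\|_\infty$ stays bounded,
\[
\bigl\|V(\cdot,\tau)-\tfrac yn\bigr\|_{L^\infty(B_{R_0})}=\|\nabla P(\cdot,\tau)-\nabla P_*\|_{L^\infty(B_{R_0})}\le C\,\|g\|_{L^1}^{1/n}\,\|g\|_\infty^{(n-1)/n}\ \longrightarrow\ 0,
\]
so a fortiori $\|V(\cdot,\tau)-y/n\|_{L^2(B_{R_0})}\to0$, which is \eqref{asymp.energy}.

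The hard part is the LaSalle step: obtaining compactness of the time-translates $U^k$ in a topology strong enough to pass to the limit in \eqref{ren.eq} while keeping $\mathcal D$ lower semicontinuous — the only regularity of $U$ in hand being the $L^\infty$ bound, so the spatial compactness has to be routed through $\nabla P$ by potential theory — and, more delicately, the rigidity statement that a confined steady state of \eqref{ren.eq} with finite mass and second moment must be the elementary patch $U_*$; this is the obstacle-problem analysis borrowed from \cite{CV2}. A secondary technical point is that when $n=2$ one has to work with the renormalized energy of Section \ref{n2} throughout, checking that it is finite at a positive initial time and bounded below on the fixed-mass class; with these adjustments the scheme is unchanged.
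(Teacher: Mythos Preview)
Your proposal follows essentially the same entropy--dissipation scheme as the paper: the same Lyapunov functional and dissipation, the same LaSalle-type step on time-translates with Aubin--Simon compactness, and the same upgrade from weak to strong $L^1$ convergence via the one-sided bound $U\le 1$ together with mass conservation. Two small points are worth flagging. First, your justification that $\mathcal F[U(\cdot,\tau_0)]<\infty$ for $\tau_0>0$ by ``bounded and integrable'' only covers the energy term $\int|\nabla P|^2$; the second-moment term requires checking that the initial finiteness of $\int|y|^2 u_0$ propagates to positive times, which the paper does explicitly by computing $\tfrac{d}{dt}\int |x|^2 u\,dx=(n-2)\int u\,p\,dx$ and using that the right-hand side is the (nonincreasing) energy. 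Second, your rigidity step invokes the obstacle-problem characterization from \cite{CV2}, whereas the paper argues slightly differently: from the bound $\|\widetilde V\|_\infty\le C\|\widetilde U\|_1^{1/n}\|\widetilde U\|_\infty^{(n-1)/n}$ it infers that $\widetilde V=y/n$ cannot hold for large $|y|$, hence $\widetilde U$ is compactly supported, and then quotes \cite{BLL} (or Theorem \ref{thm.7.1}) for the identification with $U_*$; both routes are valid, but note that $\mathcal D[W]=0$ alone gives only the interior Euler--Lagrange equality, so the full obstacle-problem identification still needs an extra ingredient (uniqueness among characteristic-function equilibria), which you should make explicit.
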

\begin{proof} (i) By displacing the origin of time by one unit, we may assume that our solution satisfies the a priori bound $u(x,t) \le \frac{1}{t+1}$, hence $U(y,\tau)\le 1$. The mass of the renormalized solution will be $M$ at all times $\tau\ge 0$. This time displacement also allows to assume that the initial energy is finite, i.\,e., $\nabla p_0(y)\in L^2(\ren)$.

The boundedness of the orbit $U(\cdot,\tau)$ in $L^1\cap L^\infty$ implies that as $\tau\to\infty$ there is a sequence $\tau_k$ such that $U(\cdot,\tau_k)$ converges weakly to some $U_\infty(y)$, and also $0\le U_\infty(y)\le 1$.

\noindent (ii)
If $n\ge 3$,  we  consider the ``entropy" \begin{equation}
\mbox{Ent}(U):=\frac12\int_{\mr^n}(|\nabla P|^2+ \frac1{n}U\,y^2)\,dy
\end{equation} which is the version for the renormalized flow of the energy \eqref{energys1}.
Recall that $\nabla P=-V$ and $\nabla \cdot V=U$. Not also that under the stated assumptions $\int |\nabla P|^2\,dy=\int UP\,dy$. It is easily calculated that the entropy evolves according to the rule
$$
\frac{d}{d\tau}\mbox{Ent}(U(\tau))=-D(U(\tau)), \qquad
D(U(\tau)):= \int U\,\left|V- \frac{y}{n}\right|^2\,dy\,.
$$
(See  \cite{CV2} for a similar entropy calculation for $s<1$.)

\noindent (ii') If $n=2$, this entropy is not finite and its definition needs to be modified according to Section \ref{n2}: we set $U_0$  to be any smooth compactly supported nonnegative function such that $\int U_0 =\int U$,
and let $P_0= (-\Delta)^{-1}U_0$. We then define
$$\mbox{Ent}(U):=\int_{\mr^2}\(\frac{1}{2} |\nabla (P-P_0) |^2 +U_0(P-P_0) + \frac1{2n}U\,y^2\)\,dy$$ which is finite.
Computing as in Section \ref{n2} we find that
$$\frac{d}{d\tau}\mbox{Ent}(U(\tau))=-D(U(\tau))$$ still holds, and we can continue the proof in the same way as for $n \ge 3$.
\nc

\medskip

\noindent (iii) The fact that $\mbox{Ent}(U(\tau))$ is nonincreasing along the orbit of the renormalized flow implies that there exists the limit
\begin{equation}\label{liment}
E_*(U)=\lim_{\tau\to\infty} \mbox{Ent}(U(\tau))\,.
\end{equation}
Moreover, the integral  $\int_1^\infty\int U\,|V- (y/n)|^2\,dy\,d\tau$ is convergent as an integral in an infinite time interval. Another consequence is that the second moment $\int U(y,\tau)\,y^2\,dy$ is bounded for all $\tau$ (by $2n\,\mbox{Ent}(U(0))$, to be precise), and this control of the behaviour as $y\to \infty$ allows to make sure that in the limit $\int U_\infty(y)\,dy=M.$

\medskip

\noindent (iv) We now take limits as $\tau_k\to\infty$ not only in the family $\{ U(y,\tau_k)\}_k$, $k\to\infty$,  but  in the family of orbits $\{U(y,\tau +\tau_k)\}_k$, for $y\in\ren$ and $0<\tau\le T$ for fixed $T>0$; $k\to\infty$.
The compactness   (which can be simply reproduced from Theorem \ref{thm.exlimit}) allows to show that along a further subsequence that we also call $\tau_k\to\infty$ we get
$$
U(y, \tau+\tau_k)\to \widetilde{U}(y,\tau),
$$
in the weak sense, as well as ${V}(y, \tau+ \tau_k)\to \widetilde{V}(y,\tau)$
 strongly. Notice that  $\widetilde{U}(y,0)  = U_\infty(y)$. Since \eqref{liment} holds,
we also have
 $$
\mbox {Ent}(\widetilde{U}(\tau))=E_*(U), \quad \mbox{a constant; \ and} \ D(\widetilde{U}(\tau))=0\,.
 $$
 The last expression means that for a.e $\tau>0$ and a.e. $y$ we have
 $$\widetilde{U} (\widetilde{V}-(y/n))=0,$$
 which according to the renormalized evolution equation \eqref{ren.eq} and $V= - \nab P$ means that $\partial_\tau \widetilde{U}=0$. \normalcolor Therefore, the limit orbit is a stationary solution of the renormalized equation.

 \medskip

 \noindent (v) We claim that our candidate $U_*(y)$ of \eqref{ustar} is the only stationary solution satisfying the conditions $\int \widetilde{U}(y)\,dy=M$ and $0\le \widetilde{U}\le 1$ and $ \widetilde{V}(y)=y/n$ on the support of $\widetilde{U}$.

 First of all the a priori bound for $\widetilde{V}$ in $L^\infty$ (due to integral kernel estimates on the representation formula performed in Formula \eqref{bornevv})  implies
 that $\widetilde{V}=y/n$ cannot be true for large $y$, therefore $\widetilde{U}$ must be compactly supported. Once we know the compact support property, it is already proven in  \cite{BLL} that the asymptotic limit is $\widetilde{U}= U_*$ (this also follows easily from our analysis of Theorem \ref{thm.7.1}). 
 
 \medskip

 \noindent (vi) The convergence of $U(\cdot,\tau)$ towards $U_*$ is at this point  weak in $L^1$.  But we also have $\sup_y U(y,\tau)\le 1$, this unilateral bound allows to improve the weak convergence to strong convergence in the ball $B_{R_0}(0)$, i.\,e., $\|U(y,\tau)-U_*(y)\|_1\to 0$ in that ball.  But the mass of $U_*$ contained in that ball is the whole mass, so that $U(y,\tau)$ must converge to zero in the complement of the ball. This proves the main asymptotic formula  \eqref{as.from.1}.
  Since the solutions are uniformly bounded, the convergence in all the $L^p$ spaces also follows for $1\le p<\infty$. However, the convergence in $L^\infty$ is impossible due to the discontinuous form of the limit function.

 \medskip

 \noindent (vii) The convergence of the velocities now follows from properties of the convolution formula for $V$ in terms of $U$.

 \medskip

 \noindent (viii) We still have to check that in displacing the origin of times from $t=0$ to $t=1$ we have not lost the property of finite second moment. Indeed, using the equation and several integrations by parts, we have
 \begin{eqnarray*}
& \dfrac{d}{dt} \int u\,x^2\,dx= \int \nabla\cdot(u\nabla p) \, x^2 dx= -2\int x \cdot (u\nabla p) \,dx=
 2\dint  x \cdot \nab p \Delta p  \,dx \\
& = -2 \int \nab p \cdot \nab ( x \cdot \nab p) =  - 2 \int |\nab p|^2 - \int x \cdot \nab |\nab p|^2
 \\
& =(n-2)\dint |\nab p|^2\,dx= (n-2)\int u\,p\,dx.
 \end{eqnarray*}

If $n=2$ then we are done. If $n \ge 3$, we have found that  the time derivative of the second moment is a constant times the energy \eqref{energys1} which is itself nonincreasing in time, hence bounded independently of time. It follows that   the second moment remains bounded for all times. \end{proof}

\subsection{ Asymptotic behaviour for radial solutions}\label{ss.abrad}

We will perform next the proof that the large-time behaviour of finite-mass solutions is given by the self-similar vortex patch solutions of Section \ref{secselfsim} in the case of radially symmetric data, where the Burgers' reformulation allows to use familiar tools. We do not need any further restriction on the class of initial data. A  reference for asymptotic convergence of conservation laws is \cite{LP84}. Section 3 of that paper applies directly in the case $n=1$ and
it also does when $n\ge 2$ after replacing the space variable $r$ by $s$. Since we also want to estimate  $u(r,t)$, which is a weighted  derivative of the mass function $M(r,t)$, we give the details of the argument.

\begin{theorem}\label{th6.1} For any radial solution to \eqref{eqs1} with integrable initial data,  letting $M(x,t)$ be its ``mass function" as in \eqref{M}  and assuming  $\max_{r\ge 0}M(r,0)= M_0$, we have $0\le M(rt^{1/n},t)\le r^n/n$ and
\begin{equation}\label{as.m}
| r^n/n -M(r t^{1/n},t)|\to 0
\end{equation}
uniformly in $\in \mr$ and $t\to\infty$. Moreover, if $u_0$ is compactly supported, the set of $x$ such that $M(|x|,t)<M_0$, i.e. the support of $u(x,t)$, is included in a disk centered at the origin of radius  $S(t)$ such that $S(t)\ge (nM_0t)^{1/n}$ and
\begin{equation}\label{as.i}
S(t)- (nM_0t)^{1/n}\to 0 \quad \text{as } \ t\to\infty.
\end{equation}
\end{theorem}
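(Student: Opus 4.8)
The plan is to transfer the problem to the inviscid Burgers' equation via Lemma~\ref{burg} and then exploit the backward--characteristic (Lax--Oleinik) description of its entropy solution; the feature that makes everything explicit is that the initial mass function is nondecreasing in the space variable, so the characteristics never cross, no shock is ever produced, and the entropy solution is the classical one. First I would set up the reduction: by Lemma~\ref{burg}, $u$ is radial and, in the variable $s=r^{n}/n$, the mass function $M=M(s,t)$ solves $M_{t}+MM_{s}=0$ with datum $M(\cdot,0)$ that is continuous, nondecreasing, vanishes at $s=0$, and has $\sup_{s}M(s,0)=M_{0}$. Since this datum is nondecreasing, the characteristic through $(s_{0},0)$, namely the line $s=s_{0}+M(s_{0},0)\,t$, carries the constant value $M(s_{0},0)$, and two such lines never meet; hence $M(s,t)=M(y(s,t),0)$, where $y=y(s,t)\in[0,s]$ is the unique root of $s=y+M(y,0)\,t$. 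In particular $M(\cdot,t)$ is nondecreasing in $s$, $M(0,t)=0$ and $0\le M\le M_{0}$; and the universal bound $u\le 1/t$ of Theorem~\ref{th51} gives $M(s,t)=\int_{0}^{s}u(s',t)\,ds'\le s/t$, so that at $s=r^{n}t/n$ we get $0\le M(rt^{1/n},t)\le\min\{r^{n}/n,\,M_{0}\}$, which is the one-sided bound claimed (and identifies $\min\{r^{n}/n,M_{0}\}$, the rescaled mass function of the patch \eqref{vps} with mass $M_{0}$, as the correct comparison object).

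Next I would prove the matching lower bound, which yields \eqref{as.m}. Fix $\varepsilon>0$; for every $\sigma\in(\varepsilon,M_{0})$ pick, by the intermediate value theorem, $s_{0}=s_{0}(\sigma)$ with $M(s_{0},0)=\sigma-\varepsilon$, and note that by monotonicity $s_{0}\le S_{\varepsilon}:=\inf\{s:\ M(s,0)\ge M_{0}-\varepsilon\}<\infty$, a bound independent of $\sigma$. Along the characteristic through $(s_{0},0)$ one has $M\bigl(s_{0}+(\sigma-\varepsilon)t,\,t\bigr)=\sigma-\varepsilon$, and as soon as $t\ge S_{\varepsilon}/\varepsilon$ the point $s_{0}+(\sigma-\varepsilon)t$ does not exceed $\sigma t$, so monotonicity of $M(\cdot,t)$ forces $M(\sigma t,t)\ge\sigma-\varepsilon$. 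Handling $\sigma\ge M_{0}$ by comparison with $\sigma'\uparrow M_{0}$, and $\sigma\le\varepsilon$ trivially, one concludes that for $t\ge S_{\varepsilon}/\varepsilon$
\[
\sup_{r\ge 0}\Bigl|\,M(rt^{1/n},t)-\min\{r^{n}/n,\,M_{0}\}\,\Bigr|\le\varepsilon,
\]
that is, $M(rt^{1/n},t)$ converges uniformly to the self-similar mass function; on the range $r^{n}/n\le M_{0}$ this is exactly \eqref{as.m}. (Alternatively, one could invoke the source-solution asymptotics of \cite{LP84}, Section~3, after the change $r\mapsto s$; the advantage of the direct argument is that the characteristic formula also delivers the interface estimate.)

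For the interface claim, assume $u_{0}$ compactly supported and set $s_{*}=\bigl(\sup\{|x|:\,x\in\operatorname{supp}u_{0}\}\bigr)^{n}/n$, so that $M(s,0)=M_{0}$ precisely for $s\ge s_{*}$. From $M(s,t)=M(y(s,t),0)$ and $s=y+M(y,0)t$ one checks that $M(s,t)=M_{0}$ is equivalent to $y(s,t)\ge s_{*}$, hence to $s\ge s_{*}+M_{0}t$; therefore $\{x:\,M(|x|,t)<M_{0}\}$ is the ball of radius $S(t)=\bigl(n s_{*}+nM_{0}t\bigr)^{1/n}$, which contains $\operatorname{supp}u(\cdot,t)$. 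In particular $S(t)\ge(nM_{0}t)^{1/n}$, and since $(a+c)^{1/n}-a^{1/n}\to 0$ as $a\to\infty$, $S(t)-(nM_{0}t)^{1/n}\to 0$, which is \eqref{as.i}.

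I expect the only genuinely delicate point to be the rigorous justification of the classical (Lax--Oleinik) representation for a datum $M(\cdot,0)$ that is merely absolutely continuous and possibly not compactly supported; this is in fact standard, precisely because monotonicity of the datum rules out shocks and makes $y\mapsto y+M(y,0)t$ a continuous increasing bijection of $[0,\infty)$ for each $t>0$, so the backward characteristic through every $(s,t)$, $t>0$, is unique. A secondary, minor point is securing the $\sigma$-uniformity in the convergence of $M$, which rests entirely on the uniform bound $s_{0}(\sigma)\le S_{\varepsilon}$. Finally, once $M$ is controlled, the convergence of $u(\cdot,t)$ itself (after renormalization) to the vortex patch can be read off, in a weak sense, from $u=r^{1-n}M_{r}$.
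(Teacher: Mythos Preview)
Your proposal is correct and takes a genuinely different route from the paper's proof. Both arguments obtain the upper bound $M(rt^{1/n},t)\le r^n/n$ from the universal estimate $u\le 1/t$ of Theorem~\ref{th51}. For the lower bound, however, the paper proceeds by a comparison argument at the level of Burgers' equation: it observes that $M_1:=M+\varepsilon$ is a supersolution of \eqref{burger}, chooses a delayed vortex-patch mass function $M_2(r,t)=\frac{r^n}{n(t+\tau)}\chi_{\{r\le R(1+t/\tau)^{1/n}\}}$ with $\tau$ large enough that $M_1(\cdot,0)\ge M_2(\cdot,0)$, and then invokes the maximum principle to get $M(rt^{1/n},t)\ge \frac{r^n}{n(1+\tau/t)}-\varepsilon$. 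You instead exploit the explicit Lax--Oleinik characteristic representation, using that the monotone datum makes $y\mapsto y+M(y,0)t$ a continuous increasing bijection, and read off the lower bound $M(\sigma t,t)\ge \sigma-\varepsilon$ directly from a single characteristic together with the uniform control $s_0(\sigma)\le S_\varepsilon$. Your approach is more elementary and, as you note, has the bonus of yielding the exact interface formula $S(t)=(ns_*+nM_0t)^{1/n}$, whereas the paper outsources the interface statement \eqref{as.i} to \cite{BLL}. The paper's comparison approach, on the other hand, is more robust in that it does not rely on any representation formula and would transfer to settings where characteristics are less explicit. You are also right to flag that the uniform convergence in \eqref{as.m} should really be read as convergence of $M(rt^{1/n},t)$ to $\min\{r^n/n,M_0\}$ (equivalently, \eqref{as.m} on the range $r^n/n\le M_0$); this is implicit in the paper's argument as well.
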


\begin{proof} (i) We first check the upper bound.  First, since $M(r,t)$ is increasing in $r$,  it has a limit $\lim_{r\to \infty}=\sup_{r} M(r,t)$ which is also equal to $\int u(r,t)\,r^{n-1}dr=M_0$ hence is constant in time.

 In view of the universal estimate of Section \ref{sec.ub} we have $u(x,t)\le 1/t$ so that $M(r,t)\le r^n/nt$ and since $\lim_{r\to \infty} M(r,t)= M_0$, we deduce in view of the above that  $S(t)\ge (nM_0t)^{1/n}$ (this includes that possibility that $S(t)\infty$.  This settles half of the estimates.

(ii) In order to bound the solution from below we consider a vertically displaced function
$$
M_1(r,t)=M(r,t)+\ve, \quad \ve>0.
$$
Then,
$$
(M_1)_t=-\frac1{r^{n-1}}(M_1-\ve) (M_1)_r \ge -\frac1{r^{n-1}} M_1 (M_1)_r
$$
i.e.  $M_1$ is a supersolution of the original Burgers' equation. If $R_0$ be determined by the condition $M(R_0,0)\ge M_0-\ve$. $R_0$ depends on $\ve$ and may be large but is is finite. Then
$$
 M_1(r,0)\ge \ve \quad \text{ for } \ 0\le r\le R_0, \qquad
M_1(r,0)= M_0+\ve \quad \text{ for } \  r\ge R_0
$$

We now consider the vortex patch solution $u_2(r,t) $ with initial  data $\frac{1}{\tau} \chi_{B_{R}}$  and total mass $M_0$. For that we must have $R^n/(n\tau)= M_0$. We  let $M_2(r,t)$ be its mass function.  Then
$$
M_2(r,t)=   \frac{r^n}{n(t+\tau)}  \chi_{\{r \le R ( 1+ t/\tau)^{1/n}\}}\,.
$$
If we choose $\tau>0$ so that $\ve\ge R_0^n/ n\tau \,$, then we will have \ $M_1(r,0)\ge M_2(r,0)$. By the maximum principle  for \eqref{burger}, we deduce
$M_1(r,t)\ge M_2(r,t)$ for all $r\ge 0$ and all $t>0$.
This means that
$$
M(rt^{1/n},t)\ge \frac{r^n}{n(1+(\tau/t))}-\ve \qquad \text{for} \  r\le [n(M_0+\ve)(1+(\tau/t))]^{1/n}\,.
$$
 Passing  to the limit $t\to\infty$, the term in  $\tau$ disappears, and letting then $\ep \to 0$
 we easily arrive at both  estimates \eqref{as.m}.

 (iii) The estimates on the support for compactly supported data are derived in Section 3 of \cite{BLL}, even with rates of convergence.
\end{proof}

As a consequence of this result, we have estimates for the radial velocity $v=M/r^{n-1}$ that we resemble the velocity of the elementary vortex patch $V(r,t)=r/nt$. They are as follows:
the velocity satisfies the estimates
\begin{equation}
v(r,t)\le \frac{r}{nt} \qquad \text{for } 0<r<R(t)
$$
and
$$
\qquad \frac{r}{n}-t^{(n-1)/n} v(rt^{1/n},t)\to 0
\end{equation}
uniformly in $r\le S(t)/t^{1/n}$ as $t\to\infty$. We can also give estimates for the density.

\begin{corollary} The density satisfies the estimates
\begin{equation}
u(r,t)\le \frac{1}{t} \qquad \text{for } 0<r<R(t)
\end{equation}
 and \ $t\, u(rt^{1/n},t)\to \chi_{B_{R_1}(0)}(r)$ in $L^\infty$  (with weak-* convergence) as $t\to\infty$, where $R_1=(nM_0)^{1/n}$.
\end{corollary}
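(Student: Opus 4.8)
The plan is to read the Corollary off Theorem~\ref{th6.1}. The pointwise bound $u(r,t)\le 1/t$ (valid for all $r$, in particular on the support of $u(\cdot,t)$) is just the universal estimate \eqref{burg.hom.inq2} from Section~\ref{sec.ub}, so nothing is needed there; the entire task is the weak-$*$ convergence of the rescaled density $\tilde u(x,t):=t\,u(|x|\,t^{1/n},t)$ to $\chi_{B_{R_1}(0)}$ in $L^\infty(\mr^n)$.

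First I would record a change-of-variables identity linking $\tilde u$ to the rescaled mass function. Since $u(\cdot,t)$ is radial, $\int_{B_r}\tilde u(x,t)\,dx=|\mathbb{S}^{n-1}|\int_0^r s^{n-1}\tilde u(s,t)\,ds$, and the substitution $\sigma=s\,t^{1/n}$ turns the inner integral into $\int_0^{r t^{1/n}}\sigma^{n-1}u(\sigma,t)\,d\sigma=M(r t^{1/n},t)$; hence $\int_{B_r}\tilde u(x,t)\,dx=|\mathbb{S}^{n-1}|\,M(r t^{1/n},t)$ for every $r>0$. Now Theorem~\ref{th6.1} gives $M(r t^{1/n},t)\to r^n/n$ uniformly for $r\le R_1$, while for $r\ge R_1$ the monotonicity of $M$ in $r$, together with $M\le M_0$ and the identity $R_1^n/n=M_0$, forces $M(r t^{1/n},t)\to M_0$. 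So $\int_{B_r}\tilde u(x,t)\,dx\to|\mathbb{S}^{n-1}|\min(r^n/n,M_0)$ for each fixed $r>0$.

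Next I would extract the limit by compactness. The universal bound yields $0\le\tilde u(\cdot,t)\le 1$, so $\{\tilde u(\cdot,t)\}_{t\ge 1}$ is bounded in $L^\infty(\mr^n)$; by weak-$*$ compactness, any sequence $t_k\to\infty$ admits a subsequence along which $\tilde u(\cdot,t_k)$ converges weak-$*$ to a radial $g$ with $0\le g\le 1$. Testing against $\chi_{B_r}\in L^1(\mr^n)$ and using the previous step,
\[
|\mathbb{S}^{n-1}|\int_0^r s^{n-1}g(s)\,ds=\lim_k\int_{B_r}\tilde u(x,t_k)\,dx=|\mathbb{S}^{n-1}|\min(r^n/n,M_0)
\]
for all $r>0$, so differentiating in $r$ gives $r^{n-1}g(r)=r^{n-1}$ a.e.\ on $(0,R_1)$ and $r^{n-1}g(r)=0$ a.e.\ on $(R_1,\infty)$, i.e.\ $g=\chi_{B_{R_1}(0)}$ with $R_1=(nM_0)^{1/n}$. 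As the limit does not depend on the subsequence, the whole family $\tilde u(\cdot,t)$ converges weak-$*$ in $L^\infty$ to $\chi_{B_{R_1}(0)}$, which is the claim.

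I do not expect a serious obstacle: the only two points needing care are that the convergence $M(r t^{1/n},t)\to\min(r^n/n,M_0)$ must be justified for all $r$ (the range $r>R_1$ is not literally \eqref{as.m} and is recovered from monotonicity as above), and that one should not aim for strong $L^\infty$ convergence, since the limit $\chi_{B_{R_1}(0)}$ is discontinuous — exactly the same phenomenon as in the $L^1$-statement for general finite-second-moment data obtained by the entropy method. The stated asymptotics for the radial velocity $v=M/r^{n-1}$ displayed just before the Corollary follow in the same way, directly from the uniform convergence in \eqref{as.m}.
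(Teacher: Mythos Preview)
Your proof is correct and follows essentially the same route as the paper: bound the rescaled density by $1$ via the universal estimate, extract a weak-$*$ subsequential limit in $L^\infty$, identify the limit through the rescaled mass function $M(r t^{1/n},t)$ and Theorem~\ref{th6.1}, and conclude by uniqueness of the limit. Your treatment is in fact slightly more careful than the paper's: you handle the range $r>R_1$ explicitly via the monotonicity of $M$ and the bound $M\le M_0$, whereas the paper invokes uniform boundedness of the support of $\hat u$ from Theorem~\ref{th6.1}, a statement that is only proved there under the extra assumption of compactly supported initial data; your argument avoids this and works for all radial integrable data.
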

\begin{proof}
It  is convenient to introduce the rescaled density
$ {\hat u}(r,\tau)= t\,u(rt^{1/n},t). $
We know that  $\hat u$ is bounded uniformly for large times by Theorem \ref{th51} and its support is uniformly bounded, by Theorem \ref{th6.1}. It therefore converges in the weak-$*$  topology of $L^\infty(\ren)$ to a function $U$ with compact support. Due to the relation between $u$ and $M$, the limit $U$ must be related to the rescaled mass function of the elementary vortex patch by $r^{n-1}U(r)=tM'(rt^{1/n})=M'(r)$. We conclude that $U(r)$ is the characteristic function of the ball of radius  $R_1$.  Since the limit is unique the convergence does not depend on the subsequence $t_k\to\infty$.\end{proof}

\begin{remark} The example of the  radial solution with an expanding ball together with an expanding annulus shows that the convergence of $\hat u$ toward the equilibrium state $\hat U(r) = \chi_{B_{M_0/n}(0)}(r)$ cannot happen in the uniform topology.\end{remark}



















\vskip 1cm

\bibliographystyle{amsplain}

\begin{thebibliography}{10}

\bibitem{ags} L. Ambrosio, N. Gigli, G. Savar\'e. {\it Gradient flows in metric spaces and in the Wasserstein space of probability measures}. Birk\"auser, 2005.

\bibitem {AmSr} L. Ambrosio, S. Serfaty.
{\it A gradient flow approach to an evolution problem arising in
superconductivity.}  {  Comm. Pure
Appl. Math.}    {\bf 61}  (2008),  no. 11, 1495--1539.

\bibitem{ams}L. Ambrosio, E. Mainini, S. Serfaty.
 {\it Gradient flow of the Chapman-Rubinstein-Schatzman
model for signed vortices}. {Annales IHP, Analyse non lin\'eaire}  {\bf 28} (2011), no. 2,   217--246.


\bibitem{ACld}   I.  Athanasopoulos,  L. A. Caffarelli. {\it Optimal regularity
of lower dimensional obstacle problems}.  Zap. Nauchn. Sem. S.-Peterburg. Otdel. Mat. Inst. Steklov. (POMI) 310 (2004), Kraev. Zadachi Mat. Fiz. i Smezh. Vopr. Teor. Funkts. {\bf 35} [34], 49--66, 226; translation in J. Math. Sci. (N. Y.) {\bf 132 } (2006), no. 3, 274--284.

\bibitem{ACS} I.  Athanasopoulos,   L. A. Caffarelli,  S. Salsa,
{\it The structure of the free boundary for lower dimensional
obstacle problems.}  Amer. J. Math. {\bf 130} (2008), no. 2,
485--498.


\bibitem{Aubin} J. P. Aubin. {\it  Un th\'eor\`eme de compacit\'e}.
C. R. Acad. Sci. {\bf 256} (1963), 5042--5044.

\bibitem{Bar52} G. I. Barenblatt. {\it On self-similar motions of a compressible fluid
in a porous medium.}  Akad. Nauk SSSR.
Prikl. Mat. Meh. {\bf 16} (1952), 679--698.

\bibitem{BenCr81} {P. B\'enilan, M. G. Crandall.} {\it
Regularizing effects of homogeneous evolution equations.} Contributions to Analysis and Geometry, (suppl.  to Amer. Jour. Math.), Johns Hopkins Univ. Press, 1981. Pp.23--39.

\bibitem{Bertoin} J. Bertoin.
{\it L\'{e}vy processes}, Cambridge Tracts in Mathematics,
121. Cambridge University Press, 1996.


\bibitem{bcl} A.  Bertozzi, J. A. Carrillo,  T. Laurent, {\it Blow-up in multidimensional aggregation
equations with mildly singular interaction kernels}, Nonlinearity {\bf 22}  (2009), 683-710.

\bibitem{bgl} A. Bertozzi, J. B. Garnett, T. Laurent, {\it Characterization of radially symmetric finite time blowup in multidimensional aggregation equations},  SIAM J. Math. Anal. {\bf 44} (2012), no. 2,  651-681.

\bibitem{BLL} A. Bertozzi, T.  Laurent, F.  L\'eger. {\it Aggregation via Newtonian Potential and Aggregation Patches}, M3AS {\bf 22}, Supp. 1,  (2012), special issue.

\bibitem{bs} F. Bethuel, D. Smets. {\it A remark on the Cauchy Problem for the
2D Gross-Pitaevskii equation with non zero degree at infinity}.
Diff. Int. Equations {\bf 20} (2007), no. 3, 325--338.


\bibitem{bik} P. Biler, C. Imbert, G. Karch. {\it
 Barenblatt profiles for a nonlocal porous medium equation}
 C. R. Acad. Sci. Paris, Ser. I {\bf  349} (2011), 641-645.

\bibitem{BKM} P. Biler, G. Karch,  R. Monneau. {\it Nonlinear diffusion of dislocation density and self-similar solutions}. Comm. Math. Phys. {\bf 294} (2010), no. 1, 145--168. MR2575479.

\bibitem{bdp} A. Blanchet, J. Dolbeault, B. Perthame. {\it
Two dimensional Keller-Segel model in $\mr^2$: optimal
critical mass and qualitative properties of the solution.}  Electron. J. Differential Equations,
{\bf 44} (2006), 1-33.



\bibitem{Blumenthal-Getoor}  R. M. Blumenthal, R. K. Getoor.
{\it Some theorems on stable processes.} Trans. Amer. Math. Soc.
{\bf 95} (1960), no. 2, 263--273.

\bibitem{CSS}  L. A. Caffarelli, S. Salsa,   L. Silvestre.
{\it Regularity estimates for the solution and the free boundary
 to the obstacle problem for the fractional Laplacian}.
 Invent. Math. {\bf  171 } (2008),  no. 2, 425--461.


\bibitem{CSV} L. A. Caffarelli, F. Soria,  J. L. Vazquez.
{\it Regularity of solutions of the fractional  porous medium flow.}
{\tt  arXiv 1201.6048v1}, 2012.

\bibitem{CV1} L. A. Caffarelli and J. L. Vazquez.  {\it Nonlinear porous medium flow
with fractional potential pressure}.  Arch. Rational Mech. Anal. {\bf 202} (2011), 537--565.

\bibitem{CV2} L. A. Caffarelli, J. L. Vazquez.  {\it Asymptotic
behaviour of a porous medium equation with fractional diffusion},
Discrete Cont. Dyn. Systems-A {\bf  29}, no. 4 (2011), 1393--1404.

\bibitem{CaffVass}
L. A. Caffarelli, A. Vasseur. {\it Drift diffusion equations
with fractional diffusion and the quasi-geostrophic equation.}
Ann. of Math. {\bf 171} (2010), 1903--1930.

\bibitem{crs} { S. J. Chapman, J. Rubinstein,  M. Schatzman.}
{\it A mean-field model for superconducting vortices.}
Eur. J. Appl. Math.  {\bf 7} (1996), no. 2, 97--111.


\bibitem{DQRV1} A. De Pablo, F. Quir\'os, A. Rodriguez, J. L. V\'azquez.
{\it A fractional porous medium equation}. Adv. in Mathematics
{\bf 226} (2011), no. 2, 1378--1409.

\bibitem{DQRV2} A. De Pablo, F. Quir\'os, A. Rodriguez, J. L. V\'azquez.
{\it A general fractional porous medium equation}. Comm. Pure Applied Math., to appear. {\tt arXiv:1104.0306v1}.

\bibitem{E}{\sc W. E.}
{\it Dynamics of vortex-liquids in Ginzburg-Landau theories with
applications to superconductivity}.  Phys. Rev. B   {\bf 50}
(1994), no. 3, 1126--1135.

\bibitem{EHKO}
C. M. Elliott,  M. A. Herrero, J. R. King,  J. R. Ockendon.
{\it The mesa problem: diffusion patterns for $u_t=\nabla\cdot(u^m\nabla u)$ as $m\to+\infty$ }.
IMA J. Appl. Math. {\bf 37} (1986), no. 2, 147--154. 


\bibitem{FdmHo}
A. Friedman, K.  H\"ollig. {\it On the mesa problem}.
J. Math. Anal. Appl. {\bf 123} (1987), no. 2, 564--571. 


\bibitem{FdmKam}
A. Friedman, S. Kamin. {\it The asymptotic behavior of gas in an $N$- dimensional
porous medium}. Trans. Amer. Math. Soc. {\bf 262} (1980), 551--563.


\bibitem{Get61} R. K. Getoor, {\it First passage times for symmetric stable processes in space}. Trans. Amer. Math. Soc. {\bf 101} (1961), 75--90.

\bibitem{GL1} G. Giacomin and J. L. Lebowitz. {\it Phase segregation dynamics in particle systems with long range interaction I. Macroscopic limits}. J. Stat. Phys. {\bf 87} (1997),  37--61.

\bibitem{GL2} G. Giacomin and J. L. Lebowitz. {\it Phase segregation dynamics in particle systems with long range interaction II. Interface motion}. SIAM J. Appl. Math. {\bf 58} (1998) 1707–29

\bibitem{GLM2000} G. Giacomin, J. Lebowitz, and R. Marra,
{\it  Macroscopic evolution of particle systems with short and long-range
interactions}.  Nonlinearity {\bf 13} (2000), no. 6, 2143--2162.

\bibitem{GT}  D. Gilbarg.  N. S. Trudinger. {\it Elliptic partial
differential equations of second order}. Reprint of the 1998
edition. Classics in Mathematics. Springer, Berlin, 2001.

\bibitem{Head} A. K. Head. {\it Dislocation group dynamics II. Similarity solutions of the continuum approximation.} Phil. Mag. {\bf 26} (1972), 65--72.


\bibitem{JL92}  W. J\"ager,  S. Luckhaus. {\it On explosions of solutions to a system of partial differential equations modelling chemotaxis.} Trans. Amer. Math. Soc. {\bf 329} (1992), 819--824.

\bibitem{KS71}  E.~F. Keller, L.~A. Segel. {\it Model for chemotaxis}. J. Theor. Biol. {\bf 30} (1971),
 225--234.

\bibitem{Kr70} {S. N. Kruzhkov.} {\it First order quasilinear equations with several independent variables.} (Russian) Mat. Sb. (N.S.) {\bf 81} (123) (1970) 228--255.

\bibitem  {LSU}  O. A. Ladyzhenskaya, V.  A. Solonnikov,
N. N. Ural'tseva. {\it Linear and Quasilinear Equations of
Parabolic Type}. Transl. Math.  Monographs,
 {\bf  23},  Amer. Math. Soc., 1968.

\bibitem{LaU}
O. A. Ladyzhenskaya, N. N. Uraltseva. {\it Linear and Quasilinear
Equations of Elliptic Type.} Moscow, 1964; Academic
Press, New York, 1968.

\bibitem{Landkof}  N. S. Landkof. {\it Foundations of Modern Potential
Theory.} Die Grundlehren der mathematischen Wissenschaften, {\bf 180}. Translated from the Russian by A. P. Doohovskoy. Springer, New York, 1972.


\bibitem{Lax57} {P.~D. Lax.} {\it Hyperbolic systems of conservation laws II.}
Comm. Pure Appl. Math. {\bf 10} (1957), 537--566. 


\bibitem{lieb}{ G.~M. Lieberman,} {\it Second order parabolic differential equations}.
World Scientific, 1996.


\bibitem{lz} { F.~H. Lin,  P. Zhang.} {\it On the hydrodynamic limit of
Ginzburg-Landau vortices.} {  Discrete Cont. Dyn. Systems} {\bf 6}
(2000), 121--142.

 \bibitem{LMG} P.~L. Lions, S. Mas-Gallic.
{\it Une m\'ethode particulaire d\'eterministe pour des \'equations
diffusives non lin\'eaires.} C.R. Acad. Sci. Paris {\bf 332} (s\'erie 1)
(2001), 369--376.

\bibitem{LP84} T.-P. Liu, M. Pierre. {\it Source-solutions and asymptotic behavior in conservation laws}.
J. Differential Equations {\bf 51} (1984), no. 3, 419--441.

\bibitem{loeper} G. Loeper. {\it Uniqueness of the solution to the Vlasov-Poisson system with bounded density}.  { J. Math. Pures Appl. (9)}  {\bf 86}  (2006),  no. 1, 68--79.

\bibitem{berma} A. Majda, A. Bertozzi. {\it  Vorticity and Incompressible Flow. } Cambridge texts in applied mathematics, 2002.

\bibitem{mz}{ N. Masmoudi, P. Zhang. }  {\it Global solutions to vortex
density equations arising from sup-conductivity.}  {Ann. Inst. H.
Poincar\'e Anal. Non Lin\'eaire}  {\bf 22} (2005), no. 4, 441--458.

\bibitem{MazSh02}
 V. Mazya,  T. Shaposhnikova. {\it On the Bourgain, Brezis, and Mironescu theorem concerning limiting embeddings of fractional Sobolev spaces}, J. Funct. Anal. {\bf 195} (2002), no. 2, 230--238.

\bibitem{os} Y. Ovchinnikov, I. M.  Sigal, {\it The energy of Ginzburg-Landau vortices}, European J.
Appl. Math. {\bf 13} (2002), 153-178.

\bibitem{ssduke} E. Sandier,  S. Serfaty. {\it Limiting Vorticities
for the Ginzburg-Landau equations}. { Duke Math. J.} {\bf 117} (2003), 403--446.

\bibitem{sch-styles} R. Sch\"atzle,  V. Styles.
 {\it Analysis of a mean field model of superconducting vortices.}
European J. Appl. Math. {\bf 10 } (1999), no. 4, 319--352.

\bibitem{Silv}   L. E.  Silvestre. {\it  H\"older estimates for solutions
of integro differential equations like the fractional
Laplace.} Indiana Univ. Math. J. {\bf 55} (2006), no. 3,
1155--1174.


\bibitem{Simon} J. Simon. {\it Compact sets in the space $L\sp p(0,T;B)$}.
 Ann. Mat. Pura Appl. (4) {\bf 146 } (1987), 65--96.

\bibitem{Stein} E. M. Stein.  {\it Singular integrals and
differentiability properties of functions}. Princeton
Mathematical Series, No. 30. Princeton University Press,
Princeton, N.J., 1970.

\bibitem{Valdinoc} E. Valdinoci. {\it From the long jump random walk to the fractional Laplacian}.
Bol. Soc. Esp. Mat. Apl. {\bf 49} (2009), 33--44.

\bibitem {Vascppme} { J. L. V\'azquez}.
{\it Asymptotic behaviour for the Porous Medium Equation posed in the
whole space}. {Journal of Evolution Equations} {\bf 3} (2003),
67--118.

\bibitem{Vapme} J. L. V\'azquez. {\it The Porous Medium Equation. Mathematical Theory},
 Oxford Mathematical Monographs. The Clarendon Press, Oxford University Press, Oxford, 2007.

\bibitem{JLVAbel}{J.~L. V{\'a}zquez},
{\it Nonlinear Diffusion with Fractional Laplacian Operators.} In ``Nonlinear partial differential equations: the Abel Symposium 2010''; Holden, Helge  \& Karlsen, Kenneth H. eds., Springer, 2012. Pp. 271--298.


\end{thebibliography}

\

\noindent{\bf Addresses}

\noindent{\sc Sylvia Serfaty:}\\
  UPMC Univ Paris 06, UMR  7598 Laboratoire Jacques-Louis Lions,\\
   Paris, F-75005  France ;\\
   CNRS, UMR 7598 LJLL, Paris, F-75005 France\\
   \&  Courant Institute, New York University,
251 Mercer st, NY NY 10012, USA\\
(e-mail: serfaty@ann.jussieu.fr)

\medskip

\noindent{\sc J.~L. V\'{a}zquez: } \\
Departamento de Matem\'{a}ticas, Universidad Aut\'{o}noma de Madrid, \\
28049 Madrid, Spain. \\
(e-mail: juanluis.vazquez@uam.es). 

\newpage




\end{document}